\documentclass[11pt,reqno]{amsart}
\usepackage{amsmath,amsfonts,amsthm,amscd,amssymb,graphicx}
\usepackage{epstopdf}
\numberwithin{equation}{section}
\usepackage{fullpage}
\usepackage{color}


\numberwithin{equation}{section}




\newtheorem{theorem}{Theorem}[section]

\newtheorem{lemma}{Lemma}[section]

\newtheorem{proposition}{Proposition}[section]

\theoremstyle{definition}
\newtheorem{definition}{Definition}[section]
\newtheorem{remark}{Remark}[section]



\newcommand{\RR}{{\mathbb R}}
\newcommand{\ds}{\displaystyle}

\newcommand\bp{\begin{pmatrix}}
\newcommand\ep{\end{pmatrix}}
\newcommand\be{\begin{equation}}
\newcommand\ee{\end{equation}}

\renewcommand\o{\omega}


\title{Traveling waves for  bistable \ reaction-diffusion-convection equations  \\ with discontinuous density-dependent coefficients}

\author{Pavel Dr\'{a}bek}
\address{Department of Mathematics and NTIS, Faculty of Applied Sciences, University of West Bohemia, Univerzitní 8, Plze\'{n} 30100, Czech Republic}
\email{pdrabek@kma.zcu.cz}

\author{Soyeun Jung}
\address{Division of International Studies, Kongju National University, Gongju-si, Chungcheongnamdo, South Korea}
\email{soyjung@kongju.ac.kr}

\author{Eunkyung Ko}
\address{Department of Mathematics, College of Natural Sciences, Keimyung University, Daegu  42601, South Korea}
\email{ekko@kmu.ac.kr}

\author{Michaela Zahradn\'{i}kov\'{a}}
\address{Department of Mathematics, Faculty of Science, University of South Bohemia,  Brani\v{s}ovsk\'{a}~1760, 370~05~\v{C}esk\'{e}~Bud\v{e}jovice, Czech Republic}
\email{zahram05@prf.jcu.cz}

\begin{document}

\begin{abstract} 
Continuing our previous study \cite{DJKZ} on the monostable reaction–diffusion–convection equation, we analyze the bistable case under weak regularity assumptions. Our approach applies monostable results on the subintervals where the reaction term $g$ has constant sign, thereby establishing both existence and nonexistence of bistable traveling wave solutions. We extend the results of \cite{MMM04}, obtained for $p=2$ under higher regularity assumptions ($d \in C^1[0,1]$, $g,h \in C[0,1]$), to the $p$-Laplacian with $p>1$ in our weak regularity setting. 
\end{abstract}


\maketitle


\smallskip

{\bf  Key words}:  traveling wave solutions, bistable reaction-diffusion-convection equation, degenerate and singular diffusion,  discontinuous density-dependent coefficients

\smallskip

{\bf AMS Subject Classifications}: 35C07, 35K57, 34B16

\smallskip

\section{Introduction}

We study traveling wave solutions of the nonlinear reaction–diffusion–convection equation
\begin{equation}
	\label{rdc-eq}
	u_t+h(u)u_x=\left[d(u)|u_x|^{p-2}u_x\right]_x+g(u), \quad (x,t) \in \mathbb{R} \times [0,+\infty).
\end{equation}
Here $p>1$, $h(u)$ is a nonlinear convective velocity, $d(u)$ is a diffusion coefficient, and $g(u)$ is a bistable reaction term, i.e., there exists $s_* \in (0,1)$ such that 
\be \label{r}
g(0)=g(1)=g(s_*)=0, \quad \text{$g<0$ in $(0,s_*)$}, \quad  \text{$g>0$ in $(s_*, 1)$}. 
\ee

Reaction–diffusion–convection equations naturally arise in a wide range of biological, chemical, and physical models, including population dynamics, combustion processes, and chemical reactions. They capture the interaction between local reaction kinetics, diffusive spreading, and advective transport. A central theme in this context is the study of traveling wave solutions, particularly their existence, as they describe the propagation of profiles connecting distinct equilibrium states. A traveling wave solution is a solution of the form $u(x,t)=U(x-ct)$, where $U$ is the profile, and $c$ denotes the  speed of wave propagation (wave speed for short).

In this paper, we investigate monotone decreasing traveling wave solutions, connecting the equilibria $1$ and $0$, under weak regularity assumptions on the coefficients. We assume only that $g$ is continuous on $[0,1]$, while $d$ and $h$ may have finitely many jump discontinuities. Moreover, $d$ may degenerate or have a singularity at one or both endpoints. The precise assumptions on $d$ and $h$ will be given in the next section.

The existence and qualitative properties of traveling waves strongly depend on the type of the reaction term. Three classical classes are typically distinguished: Fisher–KPP (monostable), combustion, and Nagumo (bistable) nonlinearities; see \cite[Fig.1]{DJKZ} for their typical sign conditions. It is well known that the existence of traveling waves differs fundamentally between the monostable and the bistable or combustion cases. In the monostable case, traveling waves exist for all speeds greater than a threshold wave speed (\cite{AW, DrTa, Enguica, MM05}). By contrast, in the bistable or combustion case the wave speed is uniquely determined, so that a traveling wave profile exists only for a single critical value $c^*$ (\cite{AW, MMM04, Strier}).

In the absence of convection (i,e., $h \equiv 0$), the theory of traveling waves has been extensively studied under various regularity assumptions (e.g., see \cite{DJKZ, DZ, DZ25, GM} and the references therein). In weak regularity settings such as ours, Drábek and Zahradníková analyzed the bistable case in \cite{DZ, DZ21} and the monostable case in \cite{DZ22}, establishing existence results and asymptotic properties of traveling wave profiles.

When convection is present, however, the dynamics become considerably more intricate. The convective term may shift the threshold wave speed in the monostable case, or even destroy the existence of traveling waves in the bistable or combustion case. For example, the bistable problem \eqref{rdc-eq} with $p=2$ has been investigated in \cite{MMM04} under the assumptions $d \in C^1[0,1]$ and $g,h \in C[0,1]$. In that work, the authors showed that the additional convection term $h$ can cause traveling wave profiles to disappear, despite their existence in the pure reaction–diffusion case. Their results demonstrated that existence depends not only on diffusion and reaction, but also on the local behavior of $g$ and $h$ near the unstable equilibrium $s_*$, together with appropriate sign conditions on $h$.

In our previous work \cite{DJKZ}, we studied the monostable case with convection under the same weak regularity setting as in the present paper. There we introduced the concept of non-smooth traveling wave profiles to handle discontinuous diffusion exhibiting degeneracy or singularities. This extended the earlier smooth results \cite{MM02,MM05}. In particular, it showed that the convection term may shift the threshold wave speed $c^*$ even under such weak regularity.

The present paper focuses on the bistable case with convection under weak regularity assumptions. Our main contribution is to establish existence and nonexistence results of monotone decreasing traveling waves in this setting, thereby showing that the results of \cite{MMM04} extend to the $p$-Laplacian with $p>1$ and under only weak regularity of the coefficients. In this sense, the paper bridges the smooth bistable analysis of \cite{MMM04} and the weakly regular monostable framework of \cite{DJKZ}.

From a methodological perspective, our strategy relies on the reduction to a first-order boundary value problem developed in \cite{DJKZ}. Unlike the monostable case, however, monotonicity of traveling wave profiles is not guaranteed in the bistable setting. Since this reduction requires monotonicity, we first introduce a suitable class of solutions for which monotonicity is ensured (see Remark \ref{Mrem1} and \ref{Mrem2} for details). We then apply the monostable techniques separately to the first-order problems on $[0,s_*]$ and $[s_*,1]$, where $g$ changes sign. This procedure leads to two threshold speeds, $c_F$ and $c_B$, which always satisfy $c_F \leq c_B$. A global traveling wave profile connecting $1$ to $0$ can exist only if $c_F < c_B$. In that case, the unique admissible wave speed $c^*$ lies strictly between them. The central task of this paper is therefore to prove that the inequality $c_F < c_B$ is sufficient for the existence of a unique wave speed $c^*$, and to provide sufficient conditions on $d$, $h$, and $g$ under which this inequality holds. We remark that this overall strategy follows closely the approach employed in \cite{MMM04} under stronger regularity assumptions, which we now adapt to the present weak framework.

The paper is organized as follows. In Section \ref{MainResults}, we state the main results, together with the precise assumptions on $g$, $d$, and $h$, and recall the notion of traveling wave profiles introduced in \cite{DJKZ}. In Section \ref{Equivalent first order boundary value problem}, we reformulate the second-order traveling wave equation as an equivalent first-order boundary value problem, which provides the analytical framework for the rest of the paper. Section \ref{Forward and Backward Initial Value Problems} studies the  first-order initial and terminal value problems on $[0,1]$, establishing positivity, uniqueness, continuous dependence, and monotonicity with respect to the wave speed. Section \ref{BVP on subintervals} focuses on auxiliary boundary value problems posed on the subintervals $[0,s_*]$ and $[s_*,1]$, from which the threshold speeds $c_F$ and $c_B$ are derived. Finally, in Section \ref{results of the first order ode}, we combine these ingredients to prove the main theorems on existence and nonexistence of bistable traveling waves.

An interesting direction for future research is to consider combustion case under the same weak regularity assumptions as in this paper. Recent work by Drábek and Zahradníková has addressed this problem in \cite{DZ25}, but only in the restricted regime where $c \geq \sup_{t \in (0,1)} h(t)$. In particular, their results do not cover the full necessary range $c \geq h(0)$ for the existence of traveling waves. We believe that the techniques developed here for monostable and bistable nonlinearities, which apply regardless of the sign of $c-h(U)$, can be adapted to treat the combustion case as well. Another natural question concerns the existence of traveling wave solutions under weaker assumptions on the reaction term. In particular, much less is known when the reaction term is discontinuous. In the monostable setting, such existence results were recently obtained in \cite{GM}. Extending these results to the bistable or combustion case remains an interesting problem.

\smallskip

\section{Main results} \label{MainResults}

We begin by stating the assumptions on the reaction term $g$, the diffusion coefficient $d$, and the convection velocity $h$.

\medskip

\noindent \textbf{(H1)} $g:[0,1] \rightarrow \RR$ is continuous (not necessarily smooth) and satisfies
\be \label{r}
g(0)=g(1)=g(s_*)=0, \quad \text{$g<0$ in $(0,s_*)$}, \quad  \text{$g>0$ in $(s_*, 1)$}. 
\ee
\noindent \textbf{(H2)} $d:[0,1] \rightarrow \RR$ is nonnegative, lower semicontinuous   on $[0, 1]$ and strictly positive in $(0,1)$. 
There exist points $0=m_0<m_1< \cdots < m_r  <  m_{r+1} =1 $ such that $d$ has jump discontinuities at $m_i$ ($i=1, 2, \cdots, r$) and
\be
d|_{(m_i, m_{i+1})} \in C(m_i, m_{i+1}), ~  i=0, 1, \cdots, r.
\notag
\ee
In addition,  for any $g$ satisfying \textbf{(H1)}, $d^{\frac{1}{p-1}}g  \in L^1(0,1)$ and  
\be \label{p}
\int_0^1 (d(t))^{\frac{1}{p-1}}g(t) dt \geq 0. 
\ee

\noindent \textbf{(H3)} $h:[0,1] \rightarrow \RR$ is bounded and continuous at $s_*$. There exist points $$0=k_0<k_1<k_2< \cdots <k_q <k_{q+1}=1$$ such that $h$ has jump discontinuities at $k_j$ ($j=1, 2, \cdots, q$), and
\be
h|_{(k_j, k_{j+1})} \in C(k_j, k_{j+1}), \quad j=0,1, \cdots, q.
\notag
\ee
Since $h$ is bounded on $[0,1]$, we denote
\be \label{maxminh}
h_M:=\ds \sup_{s_* \leq t \leq 1} h(t), \quad h_m:=\ds \inf_{0 \leq t \leq s_*} h(t).
\ee

\medskip

When seeking traveling wave solutions of \eqref{rdc-eq}, we formally set 
\be
u(x,t)=U(z),  \quad z=x-ct, 
\notag
\ee
where $c$ denotes the wave speed. Substituting this ansatz into \eqref{rdc-eq} yields the following ordinary differential equation (ODE):
\begin{equation}\label{eqU}
 (d(U)|U'|^{p-2} U')' +(c-h(U) )U' +g(U) =0,
 \end{equation}
where primes denote differentiation with respect to the traveling wave coordinate $z$.

The primary objective of this project is to study a nonincreasing solution of \eqref{eqU} satisfying the boundary conditions
\be \label{bc}
\lim_{z \rightarrow -\infty} U(z) =1 ~~\mbox{and}~~\lim_{z\rightarrow \infty} U(z) = 0.
\ee
More precisely, we consider a nonincreasing profile $U$ that is strictly decreasing at every point where $U(z)\in(0,1)$; that is, in our setting, $U'(z)<0$ whenever the derivative exists, and both one-sided derivatives are negative at any point where $U'$ does not exist. We thereore define the class of solutions to be studied as follows.

\subsection{Definition of solution}\label{DefSol}

Since $d(U)$ and $h(U)$ may be discontinuous on $[0,1]$, we cannot expect a classical solution $U=U(z)$ of \eqref{eqU}, and hence we need to define more general solutions of \eqref{eqU}. Moreover, the bistable reaction term does not guarantee monotonicity of the profile $U$. Therefore, we define solutions only within the class of monotone functions.

Let $U:\mathbb{R}\to[0,1]$ be a monotone continuous function. We denote
\be \label{MKN}
\begin{split}
M_U: & = \{z \in \mathbb{R} : U(z) = m_i, ~ i=1,2, \dots, r \}, \\
K_U: & = \{z \in \mathbb{R} : U(z) = k_j, ~j=1,2, \dots q \}, \\
N_U: & = \{z \in \mathbb{R}: U(z)=0 ~\mbox{or}~ U(z)=1  \}, \\
\text{and} \quad D_U: & = \partial M_U \cup \partial N_U. 
\end{split}
\ee
Then $M_U$, $K_U$, and $N_U$ are closed sets. In particular, we set 
\be\label{Nend}
N_U  = (-\infty, z_0] \cup [z_1, \infty),
\ee
where $-\infty \leq z_0 <z_1 \leq \infty$. Following \cite{DJKZ, DZ22}, we now define the notion of monotone solution used in this paper. 

\begin{definition} \label{sdef}
A  monotone  continuous function $U :\mathbb{R} \rightarrow [0,1] \in C(\RR)$ is called a monotone solution of \eqref{eqU} if it satisfies the following properties:

\smallskip
\begin{itemize}

\item[$(a)$]  $U \in C^1(\mathbb{R} \setminus D_U),$ and $U'(z) \neq 0$ for all $z \in \mathbb{R} \setminus D_U$ whenever $U(z) \in (0,1)$. 

\smallskip

\item [$(b)$] For any $z \in \partial M_U$ there exist finite nonzero one-sided derivatives $U'(z-), U'(z+)$ and
\be \label{Ldef}
L(z) := |U'(z-)|^{p-2} U'(z-) \lim_{\xi \rightarrow z-} d(U(\xi)) = |U'(z+)|^{p-2} U'(z+)\lim_{\xi \rightarrow z+ } d(U(\xi)).
\ee

\item [$(c)$] A function $v: \mathbb{R} \rightarrow \mathbb{R}$ defined by
 \be\label{v}
 v(z) :=
 \left\{\begin{array}{lll}
 d(U)|U'(z)|^{p-2} U'(z), & z \not\in \partial M_U \cup  \partial N_U,\\
 0, & z\in  \partial  N_U,\\
 L(z), &z \in \partial M_U
 \end{array}\right.
 \ee
is continuous and  for any $z , \hat{z} \in \mathbb{R}$
\begin{equation}\label{eqv}
 v(\hat{z}) - v(z) +c (U(\hat{z}) - U(z)) - (H (U(\hat{z})) - H(U(z))) + \int_z^{\hat{z}} g(U(\tau)) d\tau =0,
\end{equation}
where $H(U):= \ds\int_0^U h(s) ds$. Moreover, $\ds \lim_{z \rightarrow \pm \infty} v(z)=0$.

\end{itemize}
 \end{definition}



\begin{remark} \label{Mrem1} 
By applying \cite[Lemma 3.2]{DJKZ} to each interval $(0,s_*)$ and $(s_*,1)$, on which the reaction term $g$ has a constant sign, both $U'(z-)$ and $U'(z+)$ are nonzero whenever $U(z) \in (0,1)\setminus \{s_*\}$. However, since $g(s_*)=0$, the nonvanishing of the one-sided derivatives $U'(z-)$ and $U'(z+)$ is not guaranteed when $U(z)=s_*$. For this reason, we explicitly require the nonvanishing of the derivative and of the one-sided derivatives in Definition \ref{sdef}(a) and (b), respectively. However, for $1<p\le2$, such nonvanishing, even when $U(z)=s_*$, can in fact be proved by adapting the argument of \cite[Theorem~5]{MMM04} (which was established for $p=2$) to our setting. The proof is provided in the appendix for the interested reader.  In the appendix, the nonvanishing for $p>2$ can also be proved, but only under an additional assumption on $d^{\frac{1}{p-1}}g$. As we aim to assume nothing beyond continuity of the reaction term $g$, the case $p>2$ without such additional assumptions remains open.  
\end{remark}

\begin{remark} \label{Mrem2} 
Since $h$ is continuous at $s_*$, we have $s_* \notin K_U$. This implies $\operatorname{int} K_U = \emptyset$, that is, $K_U = \partial K_U$ (see \cite[Remark 2.1]{DJKZ}). Moreover, the nonvanishing of both $U'(z^+)$ and $U'(z^-)$ also implies that $M_U = \partial M_U$. Indeed, without this nonvanishing property, $M_U$ may contain at most one interval $(a, b)\subset \operatorname{int} M_U$, with $-\infty \le a < b \le \infty$, on which $U(z) \equiv s_*$ and $U'(z) \equiv 0$. Consequently, under our definition of monotone solution, each of $M_U$ and $K_U$ consists only of finitely many points. 
\end{remark}

\begin{remark} \label{property of U}
We now mention several basic properties of profiles $U$ satisfying Definition \ref{sdef}, which were already obtained in \cite{DJKZ} and are included here for completeness.

\smallskip

\noindent (i) For any $z \in \partial M_U$, $U'(z-) \not= U'(z+)$. Otherwise, one would have
 $$ |U'(z-)|^{p-2} U'(z-) \lim_{\xi \rightarrow z-} d(U(\xi)) \not= |U'(z+)|^{p-2} U'(z+)\lim_{\xi \rightarrow z+ } d(U(\xi)),$$ 
since $ d(U)$ has a jump discontinuity at $m_i$, which contradicts Definition \ref{sdef}(b). 
 In contrast, for $z \in   K_U\setminus \partial M_U$,
 the derivative $U'(z)$ exists. 
  Indeed, if $U'(z-) \not= U'(z+)$ for some $z \in   K_U\setminus \partial M_U,$
then
$$d(U(z-))|U'(z-)|^{p-2} U'(z-) \not= d(U(z+))|U'(z+)|^{p-2} U'(z+) $$
because $d(U)$ is continuous at such points,  which  contradicts the continuity of $v(z)$.




\smallskip


\noindent $(ii)$ Let $z \not\in \partial M_U \cup  K_U \cup \partial N_U$.  
Assume that $|\tau|$ sufficiently  small so that $ \hat{z} = z+ \tau  \not\in \partial M_U \cup  K_U \cup \partial N_U.$
Dividing \eqref{eqv} by $\tau$ and taking $\tau \rightarrow 0$ we obtain
$$ \lim_{\tau \rightarrow 0} \frac{v(z+\tau ) - v(z)}{\tau } +(c-h(U(z)))U'(z)  +g(U(z)) =0$$
since $U'(z)$ and $\frac{d}{dz}H(U(z))$ exist. 
Hence, $v'(z)$ exists  for all  $z \not\in \partial M_U \cup  K_U \cup \partial N_U$,  and it follows that
\be
v'(z)+(c -h(U(z)))U'(z) +g(U(z))=0,
\notag
\ee
which implies that $v \in C^1 (\mathbb{R} \setminus (\partial M_U \cup  K_U \cup \partial N_U) ).$

\smallskip

\noindent $(iii)$ Let $z \in \partial M_U \cup K_U$. As in the previous case,  
 letting  $\tau \rightarrow 0-$ in \eqref{eqv} yields
\be
v'(z-)+\Big(c -\lim_{\xi \rightarrow z-}h(U(\xi))\Big)U'(z-) +g(U(z))=0,
\notag
\ee
which ensures the existence of $v'(z-)$. Likewise,   taking the limit as  $\tau \rightarrow 0+,$ the right derivative $v'(z+)$ exists and satisfies
\be
v'(z+) +\Big(c -\lim_{\xi \rightarrow z+}h(U(\xi))\Big)U'(z+) +g(U(z))=0.
\notag
\ee
In particular, if $z \in K_U \setminus  \partial M_U$, then $U'(z-)=U'(z+)$ and
\be
v'(z\pm)+\Big(c -\lim_{\xi \rightarrow z\pm}h(U(\xi))\Big)U'(z) +g(U(z))=0.
\notag
\ee
If $z \in \partial M_U \setminus K_U$, then $\ds \lim_{\xi \rightarrow z-}h(U(\xi))=\lim_{\xi \rightarrow z+}h(U(\xi))=h(U(z))$ and
\begin{equation}\label{bMU}
v'(z\pm)+\Big(c -h(U(z))\Big)U'(z\pm) +g(U(z))=0.
\end{equation}
\end{remark}

\subsection{Existence and nonexistence of traveling waves}

We now present our main results on the existence and nonexistence of monotone decreasing solution to the second-order boundary value problem (BVP)
\begin{equation}\label{odeU}
\begin{cases}
(d(U)|U'|^{p-2}U')' + (c - h(U))U' + g(U) = 0, \quad z \in \mathbb{R}, \\
\displaystyle\lim_{z \to -\infty} U(z) = 1, \quad \text{and} \quad \displaystyle\lim_{z \to +\infty} U(z) = 0.
\end{cases}
\end{equation}

We state three main theorems: two concerning the existence of traveling wave solutions and one concerning their nonexistence. These correspond to the main results obtained in \cite[Theorems 1–3]{MMM04}, but we show that the same conclusions remain valid under the weak regularity assumptions \textbf{(H1)}–\textbf{(H3)} for $p>1$.

The first theorem provides sufficient conditions (see \eqref{MExcondition_1} and \eqref{MExcondition_2}) for the existence of nonincreasing traveling wave solutions to \eqref{odeU}. These conditions, originally introduced in \cite[Theorem 1]{MMM04}, describe the proper balance among diffusion, convection, and reaction near the unstable equilibrium $s_*$. In particular, they clarify the role of the convection term $h$ compared to the nondegeneracy condition $g'(s_*) \neq 0$ assumed in \cite{GK}. It also specifies the admissible range of the unique wave speed $c^*$ (see \eqref{Mrange}) and provides qualitative properties of the corresponding profile $U$. 

Before stating the theorem, we recall \eqref{maxminh} and introduce the following quantities:
\be \label{MFmu}
\mu_1 := \sup_{t \in (0, s_*)} \frac{-(d(t))^{\frac{1}{p-1}}g(t)}{(s_*-t)^{p'-1}}>0, \quad \mu_2: = \sup_{t\in (s_*,1)} \frac{(d(t))^{\frac{1}{p-1}}g(t)}{(t-s_*)^{p'-1}}>0.
\ee
These quantities will be used to describe the admissible range of the wave speed $c^*$.

\begin{theorem} \label{existencetws} Assume that $g$, $d$ and $h$ satisfy \textbf{(H1)}–\textbf{(H3)}. If either condition 
\be\label{MExcondition_1}
\liminf_{t \rightarrow s_*^-}\frac{(s_*-t)^{p'-1}(h(s_*)-h(t))}{-(d(t))^{\frac{1}{p-1}}g(t)}>-\infty
\ee
or
\be\label{MExcondition_2}
\limsup_{t \rightarrow s_*^+}\frac{(t-s_*)^{p'-1}(h(s_*)-h(t))}{(d(t))^{\frac{1}{p-1}}g(t)}<\infty
\ee
holds, then the problem \eqref{odeU} admits a unique profile  $U=U(z)$, $z\in \RR$,  satisfying the following properties \textup{(i)}–\textup{(iv)} if and only if $c=c^*$, where $c^*$ satisfies 
\be \label{Mrange}
h_m-(p')^{\frac{1}{p'}} p^{\frac{1}{p}} \mu_1^{\frac{1}{p'}} < c^*  < h_M+(p')^{\frac{1}{p'}} p^{\frac{1}{p}} \mu_2^{\frac{1}{p'}}. 
\ee  
The properties \textup{(i)}–\textup{(iv)} are as follows: 
\begin{itemize}
\item[(i)] $U$ is strictly decreasing on the open interval $(z_0, z_1)=\{z \in \RR: 0<U(z)<1\}$ where $-\infty \leq z_0 < 0 <z_1 \leq +\infty$, $U(0)=\frac{1}{2}$, $U(z)=1$ for $z \in (-\infty, z_0]$, and $U(z)=0$ for $z \in [z_1, +\infty)$. In particular, for every $z\in(z_0,z_1)$, both one-sided derivatives $U'(z-)$ and $U'(z+)$ exist and are nonzero.

\smallskip
	
\item[(ii)] $M_U=\{\zeta_1, \zeta_2, \cdots, \zeta_r \}$ and $U \in C(\RR)$ is a piecewise $C^1(\RR)$--function in the sense that
\be
U|_{(\zeta_i, \zeta_{i+1})} \in C^1(\zeta_i, \zeta_{i+1}), \quad i=0, 1, \cdots, r, \quad \zeta_0 = z_0, \quad \zeta_{r+1} = z_1,
\notag
\ee
and one-sided derivatives $\ds U'(\zeta_i \pm)=\lim_{z \rightarrow \zeta_i \pm}U'(z)$, $i=1, 2, \cdots, r$, exist and are finite.
	
\smallskip

\item[(iii)] $U$ satisfies $\ds \lim_{z \rightarrow z_0+} d(U(z))|U'(z)|^{p-2}U'(z)=\lim_{z \rightarrow z_1-} d(U(z))|U'(z)|^{p-2}U'(z)=0$
and the transition condition
\be
|U'(\zeta_i-)|^{p-2}U'(\zeta_i-) \lim_{z \to \zeta_i-} d(U(z))=|U'(\zeta_i+)|^{p-2}U'(\zeta_i+) \lim_{z \to \zeta_i+} d(U(z)),  \quad  i=1, 2, \cdots, r.
\notag
\ee

\smallskip

\item[(iv)] $U$ satisfies the problem \eqref{odeU} (in a classical sense) on $\RR \setminus \{\partial M_U \cup K_U \cup \partial N_U\}$.
\end{itemize}

\medskip

\end{theorem}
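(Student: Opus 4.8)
The plan is to prove Theorem~\ref{existencetws} by reducing \eqref{odeU} to the equivalent first-order boundary value problem of Section~\ref{Equivalent first order boundary value problem} and then gluing two monostable sub-problems at $s_*$. Writing $v=d(U)|U'|^{p-2}U'$ as a function $V=V(s)$ of the profile value $s=U\in[0,1]$ (legitimate because admissible profiles are strictly decreasing on $(z_0,z_1)$), the flux $V$ is continuous on $[0,1]$ --- even across the jumps $m_i$ of $d$, by \eqref{Ldef} and the continuity of $v$ in Definition~\ref{sdef}(c) --- satisfies $V<0$ on $(0,1)$ with $V(0)=V(1)=0$, and, on each interval where $d$ is continuous, solves the first-order ODE $V'(s)=h(s)-c+g(s)\bigl(-V(s)/d(s)\bigr)^{-1/(p-1)}$. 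Since $g<0$ on $(0,s_*)$ and $g>0$ on $(s_*,1)$, I would treat $[0,s_*]$ and $[s_*,1]$ separately as the forward and backward monostable problems of Section~\ref{BVP on subintervals}, whose analysis supplies the threshold speeds $c_F$ and $c_B$ with $c_F\le c_B$.

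Next I would set up the matching at $s_*$. For fixed $c$, let $V_F(\cdot\,;c)$ be the solution on $[0,s_*]$ with $V_F(0;c)=0$ and $V_B(\cdot\,;c)$ the solution on $[s_*,1]$ with $V_B(1;c)=0$, both kept negative on the open interval. By the positivity, uniqueness, continuous dependence, and strict $c$-monotonicity of Section~\ref{Forward and Backward Initial Value Problems}, the map $c\mapsto V_F(s_*;c)$ is strictly decreasing while $c\mapsto V_B(s_*;c)$ is strictly increasing, and $c_F,c_B$ are characterized by $V_F(s_*;c_F)=0$, $V_B(s_*;c_B)=0$, so that $V_F(s_*;c)<0$ iff $c>c_F$ and $V_B(s_*;c)<0$ iff $c<c_B$. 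Hence $\Phi(c):=V_F(s_*;c)-V_B(s_*;c)$ is strictly decreasing, and if $c_F<c_B$ then $\Phi(c_F)>0>\Phi(c_B)$, giving a unique zero $c^*\in(c_F,c_B)$ with $V_F(s_*;c^*)=V_B(s_*;c^*)<0$. This strict negativity is the analytic content of the nonvanishing requirement: since $d(s_*)>0$, it forces $|U'(s_*\pm)|=\bigl(-V(s_*;c^*)/d(s_*)\bigr)^{1/(p-1)}\neq0$ for \emph{every} $p>1$, yielding the one-sided derivative statement in property~(i) (cf.\ Remark~\ref{Mrem1}).

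The crux of the argument, and the step I expect to be the main obstacle, is to show that either \eqref{MExcondition_1} or \eqref{MExcondition_2} forces the strict inequality $c_F<c_B$. This requires a sharp local analysis of the first-order ODE near $s_*$, comparing the rates at which $V_F(\cdot\,;c)$ and $V_B(\cdot\,;c)$ approach $s_*$; these are governed by the competition between the convection defect $(s_*-t)^{p'-1}\bigl(h(s_*)-h(t)\bigr)$ (resp.\ $(t-s_*)^{p'-1}\bigl(h(s_*)-h(t)\bigr)$) and the reaction--diffusion weight $(d(t))^{1/(p-1)}g(t)$ appearing in \eqref{MExcondition_1}--\eqref{MExcondition_2}. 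The quantities $\mu_1,\mu_2$ of \eqref{MFmu} enter as the correct one-sided bounds on $(d)^{1/(p-1)}g/(s_*-t)^{p'-1}$; an upper/lower barrier construction for $V$ near $s_*$ on each subinterval then yields the quantitative estimates $c_F>h_m-(p')^{1/p'}p^{1/p}\mu_1^{1/p'}$ and $c_B<h_M+(p')^{1/p'}p^{1/p}\mu_2^{1/p'}$, which together with $c^*\in(c_F,c_B)$ produce the admissible range \eqref{Mrange}. The delicate point is that at $p=2$ this reduces to \cite{MMM04}, but for general $p>1$ the nonlinear dependence on $V$ through the $\bigl(-V/d\bigr)^{-1/(p-1)}$ factor makes both the barrier estimates and the asymptotic matching at $s_*$ substantially more technical.

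Finally I would reconstruct the profile and verify the remaining properties. Given $c^*$ and the matched flux $V$, the profile is recovered by inverting $-U'=\bigl(-V(U)/d(U)\bigr)^{1/(p-1)}$ and integrating, normalizing by $U(0)=\tfrac12$ to fix the translation; the tails $U\equiv1$ on $(-\infty,z_0]$ and $U\equiv0$ on $[z_1,\infty)$ and the finiteness or infiniteness of $z_0,z_1$ follow from the integrability of $\bigl(-V/d\bigr)^{-1/(p-1)}$ near $s=1$ and $s=0$. Properties~(ii)--(iii) --- the finite set $M_U=\{\zeta_1,\dots,\zeta_r\}$, the piecewise $C^1$ regularity, and the transition/flux conditions at the $\zeta_i$ --- are then read off from the continuity of $V$ across the jumps $m_i$ of $d$ together with Remark~\ref{property of U}, while (iv) is immediate from \eqref{eqv} and Remark~\ref{property of U}. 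Nonexistence for $c\neq c^*$ follows from the uniqueness of the zero of $\Phi$: if $c\neq c^*$ the two pieces cannot be glued into a single continuous negative flux on $[0,1]$, so no admissible monotone profile connecting $1$ to $0$ exists; this gives the ``if and only if'' conclusion and, via strict monotonicity, the uniqueness of $U$.
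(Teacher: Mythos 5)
Your overall architecture --- reduction to the first-order flux equation, splitting at $s_*$ into a forward problem on $[0,s_*]$ and a backward problem on $[s_*,1]$, the threshold speeds $c_F\le c_B$, and the monotone matching of $V_F(s_*;c)$ and $V_B(s_*;c)$ to produce a unique $c^*\in(c_F,c_B)$ --- is essentially the paper's (Sections \ref{Equivalent first order boundary value problem}--\ref{results of the first order ode}, in particular Proposition \ref{existenceprop}). The genuine gap sits exactly at the step you yourself flag as the crux: you never prove that \eqref{MExcondition_1} or \eqref{MExcondition_2} forces $c_F<c_B$. The mechanism you offer --- barrier constructions near $s_*$ controlled by $\mu_1,\mu_2$ --- is the content of Theorems \ref{Fexthm} and \ref{exthm}, and it yields only the a priori localization $h_m-(p')^{1/p'}p^{1/p}\mu_1^{1/p'}\le c_F\le h(s_*)\le c_B\le h_M+(p')^{1/p'}p^{1/p}\mu_2^{1/p'}$, i.e.\ the range \eqref{Mrange} \emph{once $c^*$ is known to exist}. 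These bounds hold under \textbf{(H1)}--\textbf{(H3)} alone and are perfectly compatible with the degenerate situation $c_F=c_B=h(s_*)$, which is precisely what must be excluded; the hypotheses \eqref{MExcondition_1}--\eqref{MExcondition_2} play no role in the $\mu$-barriers.

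What is actually needed (Theorem \ref{existencebvp}) is a contradiction argument: if, say, \eqref{MExcondition_2} holds and $c_B=h(s_*)$, then \eqref{rcB} forces $\hat\nu=\liminf_{t\to s_*^+}f(t)/(t-s_*)^{p'-1}=0$, and one must then establish the sharp asymptotics $\hat z_{h(s_*)}(t)=(\hat y_{h(s_*)}(t))^{1/p'}=o(t-s_*)$ as $t\to s_*^+$ (via a Gronwall-type bootstrap on $u(t)=\sup_{s_*<\tau<t}\hat z_{h(s_*)}(\tau)$); inserting this into the integrated equation together with \eqref{MExcondition_2} makes $(\hat z_{h(s_*)}(s_*+\delta))^{p'}$ negative for small $\delta$, a contradiction. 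Nothing in your sketch supplies this $o(t-s_*)$ estimate or any substitute for it, so the central implication of the theorem remains unproven. A secondary, smaller issue: your intermediate-value argument on $\Phi(c)=V_F(s_*;c)-V_B(s_*;c)$ presupposes that $c_F$ and $c_B$ are finite, which fails when $\mu_1$ or $\mu_2$ is infinite; in that case the endpoint evaluations $\Phi(c_F)>0>\Phi(c_B)$ must be replaced by the asymptotic arguments of Cases II--IV in the proof of Proposition \ref{existenceprop}, which rely on $\int_0^1 f(t)\,dt\ge 0$.
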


When $h$ is constant, both conditions \eqref{MExcondition_1} and \eqref{MExcondition_2} are automatically satisfied, and Theorem \ref{existencetws} holds trivially. Moreover, if either condition
\be
\liminf_{t \rightarrow s_*^-} \frac{g(t)}{(s_*-t)^{p'-1}}>0 \quad \text{or} \quad \liminf_{t \rightarrow s_*^+} \frac{g(t)}{(t-s_*)^{p'-1}}>0
\notag
\ee
holds, then either \eqref{MExcondition_1} or \eqref{MExcondition_2} is also fulfilled, thereby recovering the classical existence result (\cite{GK}) without imposing any restriction on $h$. 

\medskip

The next theorem provides another sufficient condition for the existence of nonincreasing traveling wave solutions to \eqref{odeU}.
When $h$ is nondecreasing near $s_*$, the conditions \eqref{MExcondition_1} and \eqref{MExcondition_2} are automatically satisfied, regardless of the behavior of $d^{\frac{1}{p-1}}g$ near $s_*$.
Therefore, the following condition \eqref{Mexistencecondition} can be regarded as a weaker form of the monotonicity assumption on $h$.

\begin{theorem} \label{existencetws2} Assume that $g$, $d$ and $h$ satisfy \textbf{(H1)}–\textbf{(H3)}. If $h$ further satisfies 
\be \label{Mexistencecondition}
\int_{s_*}^{t_*} (h(s_*)-h(t))dt \leq 0 \quad  \text{for some $t_* \in [0,1]\setminus\{s_*\}$}, 
\ee
 then the problem \eqref{odeU} admits a unique profile $U=U(z)$, $z\in \RR$,  satisfying the above properties \textup{(i)}–\textup{(iv)} if and only if $c=c^*$, where $c^*$ satisfies \eqref{Mrange}. 
\end{theorem}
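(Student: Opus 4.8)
The plan is to reduce the whole statement to the single strict inequality $c_F<c_B$, and then to extract this inequality from \eqref{Mexistencecondition}. As explained in the Introduction, once $c_F<c_B$ is known the matching argument of Section~\ref{results of the first order ode} (built on the first-order reduction of Section~\ref{Equivalent first order boundary value problem} and the threshold speeds of Section~\ref{BVP on subintervals}) produces a unique admissible speed $c^*\in(c_F,c_B)$, the monotone profile with properties (i)--(iv), and the range \eqref{Mrange}; strict monotonicity of the one-sided fluxes in $c$ yields the ``only if'' direction. So, exactly as in Theorem~\ref{existencetws}, everything comes down to verifying $c_F<c_B$, now under the integral hypothesis \eqref{Mexistencecondition}. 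For the computation I use the first-order variable $w:=-v=d(U)\,|U'|^{p-1}\ge0$, regarded as a function of the state $s=U$, which for a decreasing profile satisfies
\be
w'=\big(c-h(s)\big)-\big(d(s)\big)^{\frac{1}{p-1}}g(s)\,w^{-\frac{1}{p-1}},\qquad s\in(0,1),
\notag
\ee
where $w'=\mathrm{d}w/\mathrm{d}s$, with $w>0$ on $(0,1)$ and $w(0)=w(1)=0$; writing $w_F(\cdot;c)$, $w_B(\cdot;c)$ for the solutions with $w_F(0)=0$ on $[0,s_*]$ and $w_B(1)=0$ on $[s_*,1]$, Section~\ref{BVP on subintervals} characterizes $w_F(s_*;c)>0\iff c>c_F$ and $w_B(s_*;c)>0\iff c<c_B$.

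First I would record a sign fact at $s_*$ that needs no extra hypothesis. Since $g(s_*)=0$ and $h$ is continuous at $s_*$, for $c>h(s_*)$ we have $w'>0$ just to the left of $s_*$ (there $c-h(s)>0$, and $-\big(d(s)\big)^{\frac{1}{p-1}}g\,w^{-\frac{1}{p-1}}>0$ because $g<0$ on $(0,s_*)$); hence $w_F$ cannot decay to $0$ at $s_*$, so $w_F(s_*;c)>0$ and $c_F\le h(s_*)$. Symmetrically, for $c<h(s_*)$ we have $w'<0$ just to the right of $s_*$ (where $g>0$), so $w_B$ decreases as $s\downarrow s_*$, giving $w_B(s_*;c)>0$ and $c_B\ge h(s_*)$. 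Thus $c_F\le h(s_*)\le c_B$ unconditionally, and it remains only to make one of these inequalities strict.

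Next I would bring in \eqref{Mexistencecondition}, which is the crux. Suppose first $t_*>s_*$ and test the backward problem at $c_0=h(s_*)$. Assuming, toward a contradiction, that the threshold is not yet passed, i.e. $w_B(s_*;h(s_*))=0$, I integrate the ODE from $s_*$ to $t_*$ and use $c_0-h(s)=h(s_*)-h(s)$ to get
\be
w_B(t_*)=\int_{s_*}^{t_*}\big(h(s_*)-h(s)\big)\,ds-\int_{s_*}^{t_*}\big(d(s)\big)^{\frac{1}{p-1}}g(s)\,w_B(s)^{-\frac{1}{p-1}}\,ds .
\notag
\ee
By \eqref{Mexistencecondition} the first integral is $\le0$, while the second is strictly positive since $\big(d(s)\big)^{\frac{1}{p-1}}g>0$ on $(s_*,t_*)$; hence $w_B(t_*)<0$, contradicting $w_B(t_*)\ge0$. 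Therefore $w_B(s_*;h(s_*))>0$, i.e. $h(s_*)<c_B$, and with $c_F\le h(s_*)$ this gives $c_F<c_B$. The case $t_*<s_*$ is the mirror image: testing the forward problem at $c_0=h(s_*)$ and integrating over $[t_*,s_*]$, the rewritten condition $\int_{t_*}^{s_*}(h(s_*)-h(s))\,ds\ge0$ together with $g<0$ on $(t_*,s_*)$ forces $w_F(s_*;h(s_*))>0$, i.e. $c_F<h(s_*)\le c_B$. Either way $c_F<c_B$, which completes the reduction.

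The step I expect to be the main obstacle is the integration toward $s_*$: because $w_B\to0$ there while $g(s_*)=0$, the factor $w_B^{-\frac{1}{p-1}}$ is singular and the flux integral may be improper at $s_*$. This is precisely the degeneracy that the pointwise conditions \eqref{MExcondition_1}--\eqref{MExcondition_2} were designed to tame in Theorem~\ref{existencetws}; here it is harmless, because the flux integral lies in $(0,+\infty]$ and any such value contradicts $w_B(t_*)\ge0$. The only real care needed is to justify passing to the limit $s\downarrow s_*$ in the integrated identity, which follows from the monotone behaviour of $w_B$ near $s_*$ obtained in Section~\ref{Forward and Backward Initial Value Problems}.
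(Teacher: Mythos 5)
Your proposal is correct and follows essentially the same route as the paper: both reduce the theorem to the strict inequality $c_F<c_B$ via Proposition~\ref{existenceprop}, note that $c_F\le h(s_*)\le c_B$ always holds (Lemmas~\ref{lFNC}--\ref{lBNC}), and then obtain strictness by integrating the equation for $w=y^{1/p'}$ (your $w$ is exactly the paper's $\tilde z$) between $t_*$ and $s_*$ at $c=h(s_*)$, using the sign of $d^{1/(p-1)}g$ on that interval together with \eqref{Mexistencecondition}. The only cosmetic differences are that the paper runs the computation directly (showing $\tilde z_{h(s_*)}(s_*)>\tilde z_{h(s_*)}(t_*)\ge 0$) rather than by contradiction, and does not dwell on the improper integral near $s_*$ that you correctly flag and dispose of.
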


The final result establishes a sufficient condition for the nonexistence of nonincreasing traveling wave solutions to \eqref{odeU}. In contrast to the previous existence theorems, this result characterizes situations where no traveling wave connecting $1$ and $0$ can exist for any wave speed $c \in \mathbb{R}$.
Indeed, when a certain value of $c$ allows the construction of two separate nonincreasing profiles connecting $1$ to $s_*$ and $s_*$ to $0$, there exists no global profile connecting $1$ and $0$ (see Proposition \ref{existenceprop}). The following conditions \eqref{Mnonh}-\eqref{MnonFf} provide a sufficient criterion ensuring that such a critical value corresponds precisely to $c = h(s_*)$, which in turn precludes the existence of a nonincreasing traveling wave connecting the two stable equilibria.

\begin{theorem}\label{nonexistencetws} Assume that $g$, $d$ and $h$ satisfy \textbf{(H1)}–\textbf{(H3)}. Further suppose that 
\be \label{Mnonh}
\ds \int_{s_*}^t (h(s_*)-h(\tau))d\tau>0 \quad  \text{for all $t \in [0,1]\setminus\{s_*\}$}
\ee
holds together with 
\be \label{MnonBf}
(d(t))^{\frac{1}{p-1}}g(t)<\frac{1}{p'p^{p'/p}}(h(s_*)-h(t)) \Big[\int_{s_*}^t (h(s_*)-h(\tau))d\tau\Big]^{\frac{p'}{p}}, \quad t \in (s_*, 1),
\ee
and 
\be \label{MnonFf}
(d(t))^{\frac{1}{p-1}}g(t)>\frac{1}{p'p^{p'/p}}(h(s_*)-h(t)) \Big[ \int_{s_*}^t (h(s_*)-h(\tau))d\tau \Big]^{\frac{p'}{p}}, \quad t \in (0,s_*).
\ee
Then the problem \eqref{odeU} has no monotone decreasing solution for any $c \in \RR$. 
\end{theorem}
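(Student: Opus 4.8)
The plan is to exploit the first-order reduction of Section~\ref{Equivalent first order boundary value problem} together with the threshold speeds $c_F$ and $c_B$ produced by the subinterval analysis of Section~\ref{BVP on subintervals}, and to reduce the statement to the single identity $c_F=c_B$. Recall that for a nonincreasing profile the quantity $w:=-d(U)|U'|^{p-2}U'\ge 0$, viewed as a function of the independent variable $t=U\in[0,1]$, satisfies the first-order equation
\begin{equation}\label{planode}
\frac{dw}{dt}=\bigl(c-h(t)\bigr)-\frac{(d(t))^{\frac{1}{p-1}}g(t)}{w^{\frac{1}{p-1}}},
\end{equation}
whose boundary behavior defines the backward threshold $c_B$ on $[s_*,1]$ (the solution with $w(1)=0$ that reaches $t=s_*$) and the forward threshold $c_F$ on $[0,s_*]$ (the solution with $w(0)=0$ that reaches $t=s_*$). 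By the reduction described before the theorem (see Proposition~\ref{existenceprop}), a global monotone profile exists for some $c$ if and only if $c_F<c_B$; since one always has $c_F\le c_B$, it suffices to prove $c_F=c_B$. I will in fact show $c_F=c_B=h(s_*)$.

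The heart of the argument is an explicit barrier for \eqref{planode} at the distinguished speed $c=h(s_*)$. Writing $\Phi(t):=\int_{s_*}^t(h(s_*)-h(\tau))\,d\tau$, so that $\Phi(s_*)=0$, $\Phi'(t)=h(s_*)-h(t)$, and $\Phi>0$ on $[0,1]\setminus\{s_*\}$ by \eqref{Mnonh}, one checks by direct substitution that
\begin{equation}\label{planbarrier}
w_*(t):=\tfrac{1}{p}\,\Phi(t)
\end{equation}
solves \eqref{planode} with $c=h(s_*)$ exactly when the coefficient $(d)^{\frac{1}{p-1}}g$ is replaced by the critical value $\frac{1}{p'p^{p'/p}}\Phi'(t)\Phi(t)^{p'/p}$ appearing on the right-hand sides of \eqref{MnonBf}–\eqref{MnonFf}; here the identity $\frac{p'}{p}=\frac{1}{p-1}$ makes all powers of $\Phi$ cancel and pins the constant to $\frac{1}{p}$. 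Thus $w_*$ is precisely the separatrix that touches $w=0$ at $t=s_*$ for the critical reaction coefficient, with $w_*(s_*)=0$ and $w_*(0),w_*(1)>0$.

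Next I would run two comparison arguments at $c=h(s_*)$. On $[s_*,1]$ let $w_B$ be the actual solution of \eqref{planode} with $w_B(1)=0$. Since \eqref{MnonBf} makes the true coefficient strictly smaller than the critical one, the right-hand side $F$ of \eqref{planode} strictly exceeds the right-hand side $F_*$ of the critical equation wherever $w>0$; combined with $w_B(1)=0<w_*(1)$, a crossing argument (at any contact point where $w_B=w_*>0$ one has $w_B'=F>F_*=w_*'$, incompatible with $w_B$ lying below $w_*$ to the right of the contact) forces $w_B<w_*$ throughout $(s_*,1)$. Hence $w_B$ vanishes at or before $s_*$, which by the monotone dependence of $w_B(s_*;\cdot)$ on $c$ from Section~\ref{Forward and Backward Initial Value Problems} yields $c_B\le h(s_*)$. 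On $[0,s_*]$ the inequality in \eqref{MnonFf} reverses the comparison ($F<F_*$ there, both reaction terms being negative), and the same crossing argument applied to the solution $w_F$ with $w_F(0)=0$ gives $w_F<w_*$ on $(0,s_*)$, hence $w_F$ vanishes at or before $s_*$ and $c_F\ge h(s_*)$. Together with $c_F\le c_B$ this squeezes $h(s_*)\le c_F\le c_B\le h(s_*)$, so $c_F=c_B=h(s_*)$, no $c$ satisfies $c_F<c_B$, and \eqref{odeU} has no monotone decreasing solution for any $c\in\RR$.

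The main obstacle is making the comparison rigorous despite the degeneracy of \eqref{planode}: the nonlinearity $w^{-1/(p-1)}$ is singular exactly at the endpoints $t=s_*,0,1$ where $w$ and $w_*$ vanish, and $d,h$ carry jumps at the points $m_i,k_j$. I expect the crossing argument to be clean on compact subintervals of $(s_*,1)$ and $(0,s_*)$ where both functions are positive and the coefficients are continuous, so the genuine care is needed (i) in passing to the limit $t\to s_*^{\pm}$ to conclude that $w_B,w_F$ really reach $0$ with the asymptotics forced by $w_*$, and (ii) in checking that the transition and flux-continuity conditions of Definition~\ref{sdef}(b)–(c) at the jump points of $d$ and $h$ are respected by the comparison. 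Both points should be handled by the well-posedness, positivity, and continuous-dependence results already established in Sections~\ref{Forward and Backward Initial Value Problems}–\ref{BVP on subintervals}, which is why I rely on them rather than re-deriving the trajectory analysis here.
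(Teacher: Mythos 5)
Your proposal is correct and follows essentially the same route as the paper: the paper's proof of Theorem \ref{nonexistencebvp} uses exactly your barrier in the variable $y=w^{p'}$, namely $K(t)=\bigl[\tfrac{1}{p}\int_{s_*}^t(h(s_*)-h(\tau))\,d\tau\bigr]^{p'}$, as a supersolution at the critical speed $c=h(s_*)$, applies the comparison result (Lemma \ref{Flecom}) to force $\tilde y_{h(s_*)}(s_*)=\hat y_{h(s_*)}(s_*)=0$, and then invokes monotonicity in $c$ (Lemma \ref{mono}) to rule out all speeds. The only cosmetic differences are that you work with $w=y^{1/p'}$ and phrase the conclusion as $c_F=c_B=h(s_*)$ via Proposition \ref{existenceprop}, whereas the paper splits directly into the cases $c\le h(s_*)$ and $c\ge h(s_*)$.
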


\begin{remark} Our nonexistence result differs slightly from that of \cite[Theorem 4]{MMM04}, which treats only the case $p=2$ and shows that the problem does not support any traveling wave solution connecting $0$ and $1$ for any $c \in \RR$. This stronger conclusion relies on the fact that, if the associated traveling wave ODE has a solution for a given speed $c$, then it also admits a monotone solution for the same $c$ (see \cite[Theorem 5]{MMM04}). Hence, the nonexistence of monotone solutions directly implies the nonexistence of all traveling waves connecting $0$ and $1$. However, in our framework, which covers the general case $1<p<\infty$, such a reduction to monotone solutions is not available. For this reason, our result is restricted to the nonexistence of monotone decreasing solutions.
\end{remark}

\section{Equivalent first-order BVP} \label{Equivalent first order boundary value problem}

In this section, we reformulate problem \eqref{odeU} as an equivalent first-order BVP. The next proposition establishes the correspondence between monotone decreasing solutions of \eqref{odeU} and positive solutions of the associated first-order problem. We recall that monotone decreasing solutions of \eqref{odeU} are already assumed to have negative one-sided derivatives at every point where $U(z) \in (0, 1)$, as in the monostable case. Since  the derivation of this equivalence was already given in our previous work on the monostable case (\cite[Section~3.2]{DJKZ}), we omit the proof here.

\begin{proposition} \label{prop} A piecewise $C^1$ function $U:\RR \rightarrow [0,1]$ is a monotone decreasing solution of \eqref{odeU}  if and only if $y:[0,1] \rightarrow \RR$ is a continuous positive solution of the first-order BVP
\begin{equation}\label{odey}
\begin{cases}
y'(t) = p' \left[ (c-h(t)) (y(t))^{\frac{1}{p}} -(d(t))^{\frac{1}{p-1}}g(t)  \right],  \quad \text{a.e. $t \in (0,1)$}, \\
 y(0)=0=y(1),
\end{cases}
\end{equation}
where $p>1$  with $\frac{1}{p}+\frac{1}{p'}=1$. In particular, a solution $U$ of \eqref{odeU} is uniquely determined (up to translation) by a solution $y$ of \eqref{odey}, and vice versa.
\end{proposition}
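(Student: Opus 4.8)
The plan is to reduce the second-order equation to first order by using the profile value $t=U(z)$ as the new independent variable and the flux $v(z)=d(U)|U'|^{p-2}U'$ from Definition~\ref{sdef}(c) as the basic unknown. Since a monotone decreasing solution is strictly decreasing on the interval $(z_0,z_1)=\{z:0<U(z)<1\}$ (recall that such solutions are assumed to have negative one-sided derivatives wherever $U\in(0,1)$), the map $z\mapsto U(z)$ is a continuous strictly decreasing bijection of $(z_0,z_1)$ onto $(0,1)$ with a well-defined inverse $z=Z(t)$. On $(0,1)$ I would then set
\be
y(t):=\big(-v(Z(t))\big)^{p'},
\notag
\ee
noting that $U'<0$ together with $d(U)>0$ on $(0,1)$ forces $v<0$, so $y>0$, and that $y$ extends continuously by $0$ to the endpoints because $v$ is continuous on $\RR$ with $\lim_{z\to\pm\infty}v(z)=0$. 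This immediately yields the boundary data $y(0)=y(1)=0$.

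For the forward implication I would carry out the change of variables explicitly. From $-v=y^{1/p'}$ and the definition of $v$ one solves for the derivative, obtaining $U'=-y^{1/p}/d(U)^{\frac{1}{p-1}}$ on $(z_0,z_1)$; the algebra here rests on the identities $p'(p-1)=p$ and $(p'-1)/p'=1/p$. On the complement of the exceptional set $\partial M_U\cup K_U\cup\partial N_U$, both $U$ and $v$ are $C^1$ and Remark~\ref{property of U}(ii) gives $v'+(c-h(U))U'+g(U)=0$; applying the chain rule to $y=(-v)^{p'}$ along $z=Z(t)$ and substituting $(-v)^{p'-1}=y^{1/p}$ together with the formula for $U'$ collapses everything to
\be
y'(t)=p'\Big[(c-h(t))\,y(t)^{\frac{1}{p}}-(d(t))^{\frac{1}{p-1}}g(t)\Big],
\notag
\ee
which is exactly \eqref{odey}. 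Since the exceptional points correspond to only finitely many values of $t$ (the $m_i$, the $k_j$, and $0,1$), this identity holds for a.e.\ $t\in(0,1)$. Crucially, the continuity of $y$ across $t=m_i$, where $d$ jumps, is inherited from the continuity of the flux $v$, which is precisely what the transition condition \eqref{Ldef} guarantees.

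For the converse I would reverse the construction: given a continuous positive $y$ solving \eqref{odey}, define the inverse profile through $Z'(t)=-d(t)^{\frac{1}{p-1}}/y(t)^{1/p}$, integrate to obtain $Z$, and set $U=Z^{-1}$, fixing the free additive constant by a normalization (e.g.\ $U(0)=\tfrac12$) so that $U$ is determined only up to translation. One then checks that $U$ is piecewise $C^1$ with the required finite nonzero one-sided derivatives and transition conditions, that the reconstructed $v(z)=d(U)|U'|^{p-2}U'$ is continuous and satisfies \eqref{eqv}, and hence that $U$ is a monotone decreasing solution of \eqref{odeU} in the sense of Definition~\ref{sdef}.

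I expect the main obstacle to lie precisely in this weak-regularity bookkeeping rather than in the formal change of variables: one must verify that $d(t)^{\frac{1}{p-1}}/y(t)^{1/p}$ is locally integrable so that $Z$ is well defined and strictly monotone, determine whether $z_0,z_1$ are finite or infinite from the integrability of this density near $t=0,1$ (where $y\to0$ and $d$ may degenerate or blow up), and confirm that the finitely many jumps of $d$ and $h$ produce corner points of $U$ compatible with \eqref{Ldef} and with the continuity of $v$. Since the substitution $y=(-v)^{p'}$ is insensitive to the sign of $g$, the entire derivation is identical to the one carried out for the monostable problem in \cite[Section~3.2]{DJKZ}, the only change being that the generic reaction $g$ there is now bistable; I would therefore invoke that analysis rather than repeat it.
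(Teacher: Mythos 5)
Your proposal is correct and follows exactly the reduction the paper relies on: the paper itself omits the proof and refers to \cite[Section~3.2]{DJKZ}, and your change of variables $t=U(z)$, $y(t)=(-v(Z(t)))^{p'}$ with the flux $v$ from Definition~\ref{sdef}(c) is precisely that construction, with the algebra ($p'(p-1)=p$, $p'/p=1/(p-1)$) checking out. No substantive difference from the paper's (referenced) argument.
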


Thanks to Proposition~\ref{prop}, our analysis will henceforth be carried out in the first-order formulation, which provides the main framework for studying the existence and nonexistence of nonincreasing traveling wave profiles. Accordingly, in what follows we focus on positive solutions of the following first-order BVP
\begin{equation}\label{ode}
\begin{cases}
y'(t) = p' \left[ (c-h(t)) (y^+ (t))^{\frac{1}{p}} -f(t)  \right],  \quad \text{a.e. $t \in (0,1)$}, \\
 y(0)=0=y(1),
\end{cases}
\end{equation}
where $y^+(t) := \max\{y(t), 0\}$, $p>1$ with $\frac{1}{p}+\frac{1}{p'}=1$, $h \in L^{\infty}(0,1)$, and $f \in L^1(0,1)$ satisfying 
\be \label{fcondition}
f(0)=f(s_*)=f(1)=0, \quad \text{$f < 0$ on $(0, s_*)$, \quad $f > 0$ on $(s_*, 1)$}
\ee
for some $s_* \in (0,1)$. 

Since $U$ is not necessarily a classical solution of \eqref{odeU}, we adopt the notion of Carathéodory solutions for \eqref{ode} (see \cite[Chapter~3]{W}). That is, a function $y:[0,1]\to\RR$ is called a solution of \eqref{ode} if it is absolutely continuous on $[0,1]$, satisfies the differential equation almost everywhere in $(0,1)$, and fulfills the boundary conditions.

This first-order formulation will be used exclusively in the subsequent sections. At each stage of the analysis, appropriate assumptions on $f$ and $h$ will be imposed as needed to investigate the existence and nonexistence of positive solutions to $\eqref{ode}$. 

\medskip

Before proceeding, we derive a basic necessary condition on the wave speed $c$ for the  BVP \eqref{ode} to admit a positive solution under the integral sign assumption of $f$.

\begin{lemma}
Assume that 
\be \label{pf}
\int_0^1 f(t) dt \geq 0
\ee
holds and the BVP \eqref{ode} admits a positive solution. Then
\be \label{bnc1}
c \geq \inf_{0 \leq t \leq 1}{h(t)}.
\ee
In particular, if the inequality in \eqref{pf} is strict, then
\be\label{bnc2}
c > \inf_{0 \leq t \leq 1} h(t),
\ee
whereas if equality holds in \eqref{pf}, then
\be\label{bnc3}
\inf_{0 \leq t \leq 1} h(t) \leq c \leq \sup_{0 \leq t \leq 1} h(t).
\ee
\end{lemma}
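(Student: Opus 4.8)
The plan is to integrate the differential equation in \eqref{ode} over the whole interval $[0,1]$ and exploit the boundary conditions $y(0)=y(1)=0$. Let $y$ denote a positive solution, so that $y>0$ on $(0,1)$. Since $y$ is absolutely continuous (as a Carath\'eodory solution), the fundamental theorem of calculus gives $\int_0^1 y'(t)\,dt = y(1)-y(0)=0$. On $(0,1)$ we have $y^+=y$, and both $(y(t))^{1/p}$ and $h(t)(y(t))^{1/p}$ are integrable, because $y$ is continuous hence bounded and $h\in L^\infty(0,1)$. Integrating the right-hand side of \eqref{ode} and dividing by the constant $p'>0$ therefore yields the identity
\[
\int_0^1 (c-h(t))\,(y(t))^{1/p}\,dt = \int_0^1 f(t)\,dt .
\]

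Next I would compare $c-h(t)$ against the constant $m:=\inf_{0\le t\le 1}h(t)$. The bound $h(t)\ge m$ gives $c-h(t)\le c-m$, and since $(y(t))^{1/p}\ge 0$ this leads to
\[
\int_0^1 f(t)\,dt = \int_0^1 (c-h(t))\,(y(t))^{1/p}\,dt \le (c-m)\int_0^1 (y(t))^{1/p}\,dt .
\]
The key quantitative point is that $I:=\int_0^1 (y(t))^{1/p}\,dt>0$, which holds precisely because $y>0$ on the open interval $(0,1)$. Combining this with the standing hypothesis $\int_0^1 f\ge 0$ forces $(c-m)I\ge 0$, and dividing by $I>0$ gives $c\ge m$, that is, \eqref{bnc1}. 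If in addition $\int_0^1 f>0$, then $(c-m)I\ge \int_0^1 f>0$, so $c-m>0$, which is \eqref{bnc2}.

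Finally, for the equality case $\int_0^1 f=0$ I would run the symmetric estimate from below. With $M:=\sup_{0\le t\le 1}h(t)$ the bound $h(t)\le M$ gives $c-h(t)\ge c-M$, whence
\[
0=\int_0^1 f(t)\,dt = \int_0^1 (c-h(t))\,(y(t))^{1/p}\,dt \ge (c-M)\,I ;
\]
dividing by $I>0$ yields $c\le M$. Together with $c\ge m$, already established from $\int_0^1 f\ge 0$, this gives exactly \eqref{bnc3}. I do not expect a genuine analytic obstacle here: the proof is essentially a single integration followed by two one-sided comparisons. The only points meriting care are the justification that $y'$ may be integrated termwise, which is guaranteed by absolute continuity of the Carath\'eodory solution, and the strict positivity $I>0$, which is what converts the integrated inequalities into the stated sign conditions on $c$.
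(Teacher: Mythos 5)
Your proposal is correct and follows essentially the same route as the paper: both integrate the equation over $[0,1]$, use $y(0)=y(1)=0$ to obtain $\int_0^1 (c-h(t))(y(t))^{1/p}\,dt = \int_0^1 f(t)\,dt$, and then exploit positivity of $\int_0^1 (y(t))^{1/p}\,dt$. The paper merely packages the final step as an explicit formula for $c$ as a quotient, whereas you derive the same conclusions via one-sided comparisons with $\inf h$ and $\sup h$; the two are equivalent.
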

\begin{proof} Let $y=y(t)$ be a positive solution of \eqref{ode}. Integrating \eqref{ode} from $0$ to $1$ yields
\be
0=p' \left[ \int_0^1 (c-h(t)) (y(t))^{\frac{1}{p}}dt -\int_0^1 f(t) dt \right],
\notag
\ee
which gives
\be
c=\frac{\int_0^1 h(t) (y(t))^{\frac{1}{p}}dt + \int_0^1 f(t) dt }{\int_0^1 (y(t))^{\frac{1}{p}}dt}. 
\notag
\ee
Thus it follows directly from \eqref{pf} that \eqref{bnc1}–\eqref{bnc3} hold.
\end{proof}

\section{Initial  and Terminal Value Problems} \label{Forward and Backward Initial Value Problems}

As a first step,  we study the initial  and terminal value problems (IVP and TVP) on $[0, 1]$, depending on a parameter $c \in \RR$:
\begin{equation}\label{y2}
\begin{cases}
y'(t) = p' \left[ (c-h(t)) (y^+ (t))^{\frac{1}{p}} -f(t)  \right],  \quad \text{a.e. $t \in (0, 1)$}, \\
 y(0)=0, 
\end{cases}
\end{equation}
and
\begin{equation}\label{y1}
\begin{cases}
y'(t) = p' \left[ (c-h(t)) (y^+ (t))^{\frac{1}{p}} -f(t)  \right],  \quad \text{a.e. $t \in (0, 1)$},\\
 y(1)=0. 
\end{cases}
\end{equation}
Here $y^+(t):=\max \{y(t),0\}$, $p>1$ with $\frac{1}{p}+\frac{1}{p'}=1$, $h \in L^{\infty}(0,1)$, and $f \in L^1(0,1)$ satisfying \eqref{fcondition}. Throughout, we further assume 
\be  \label{semicont}
f \ \text{is upper semicontinuous on } (0,s_*) \ \text{and lower semicontinuous on } (s_*,1).
\ee

To distinguish the two problems, for each $c\in \RR$ we denote by $\tilde y_c$ the solution of the IVP \eqref{y2} on $[0, 1]$, and by $\hat y_c$ the solution of the TVP \eqref{y1} on $[0, 1]$.

Many arguments here parallel those of the monostable case $f>0$ on $(0,1)$, where the problem was treated as a TVP on $[0, 1]$ (see \cite[Section~4]{DJKZ}). In the present bistable setting, we state lemmas on $(0,s_*)$ (where $f<0$) for the IVP and on $(s_*,1)$ (where $f>0$) for the TVP. Backward arguments for the TVP are identical to the monostable analysis restricted to $(s_*,1)$, while the  forward case  follows by symmetry. Thus, whenever the monostable proof applies without modification, we simply refer to \cite{DJKZ}. When a modification is needed, it is given for the forward case only, with the  backward case omitted.

Although the statements are formulated on subintervals $(0,s_*)$ and $(s_*,1)$, each property actually extends to the maximal subinterval where the corresponding solution remains positive. We highlight this after the relevant lemmas, since it is essential for solving the BVP \eqref{ode}. This motivates posing both IVP \eqref{y2} and TVP \eqref{y1} on $[0,1]$.

We begin with existence results for $\tilde y_c$ and $\hat y_c$ on $[0,1]$ in the sense of Carathéodory. For these proofs, neither the sign condition \eqref{fcondition} nor the semicontinuity assumption \eqref{semicont} is required; these will only be used later in the analysis.

\begin{lemma} \label{cara_existence} Fix $c \in \RR$. The  IVP \eqref{y2} $($resp. the TVP \eqref{y1}$)$ admits at least one global solution $\tilde y_c=\tilde y_c(t)$ $($resp. $\hat y_c=\hat y_c(t)$$)$ on $[0, 1]$ in the sense of Carath\'{e}odory.
\end{lemma}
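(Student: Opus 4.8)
The plan is to verify that the right-hand side of the differential equation satisfies the Carathéodory conditions together with a global growth estimate ruling out finite blow-up, and then to invoke the standard Carathéodory existence theory from \cite[Chapter~3]{W}. I would carry out the argument for the IVP \eqref{y2}; the TVP \eqref{y1} is handled by the symmetric substitution $t \mapsto 1-t$ (equivalently, integrating backward from $t=1$), which transforms \eqref{y1} into a problem of exactly the same type, so no separate work is needed.

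Write $F(t,y) := p'\big[(c-h(t))(y^+)^{1/p} - f(t)\big]$. First I would check the Carathéodory hypotheses: for each fixed $y$ the map $t \mapsto F(t,y)$ is measurable, since $h$ and $f$ are measurable; and for a.e.\ fixed $t$ the map $y \mapsto F(t,y)$ is continuous, because $y \mapsto (y^+)^{1/p} = (\max\{y,0\})^{1/p}$ is continuous on all of $\RR$ (it vanishes for $y \le 0$ and equals $y^{1/p}\to 0$ as $y\to 0^+$). This map fails to be Lipschitz at the origin, but Carathéodory existence requires only continuity in $y$; uniqueness is not claimed here and is treated separately later.

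The key step is the global bound. Using the elementary inequality $(y^+)^{1/p} \le 1 + |y|$, valid for all $y\in\RR$ since $p>1$, one obtains
\[
|F(t,y)| \le a(t) + b(t)\,|y|, \qquad a(t) := p'\big(|c-h(t)| + |f(t)|\big), \quad b(t) := p'\,|c-h(t)|.
\]
Because $h \in L^\infty(0,1)$ and $f \in L^1(0,1)$, we have $a \in L^1(0,1)$ and $b \in L^\infty(0,1)$. This $L^1$-majorant supplies the local bound needed for the existence of a local absolutely continuous solution near $t=0$, while the linear growth in $y$ is precisely what prevents blow-up.

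To globalize, suppose $\tilde y_c$ is a maximal solution on $[0,T)$ with $T\le 1$. From the integral form $\tilde y_c(t) = \int_0^t F(s,\tilde y_c(s))\,ds$ and the bound above,
\[
|\tilde y_c(t)| \le \int_0^t a(s)\,ds + \int_0^t b(s)\,|\tilde y_c(s)|\,ds,
\]
and Gronwall's inequality yields $|\tilde y_c(t)| \le \|a\|_{L^1(0,1)}\exp\big(\|b\|_{L^1(0,1)}\big)$ uniformly on $[0,T)$. This uniform a priori bound, together with $|\tilde y_c'| \le a + b\,|\tilde y_c| \in L^1(0,1)$, forces $\lim_{t\to T^-}\tilde y_c(t)$ to exist and be finite; hence a solution with $T<1$ could be continued past $T$, contradicting maximality. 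Therefore $T=1$ and, defining $\tilde y_c(1)$ by the (finite) left limit, we obtain a global solution on $[0,1]$. I do not anticipate a genuine obstacle here; the only point demanding care is the continuity, rather than Lipschitz continuity, of $(y^+)^{1/p}$ at the origin, which is exactly why the Carathéodory framework, and not a Picard–Lindelöf argument, is the appropriate tool.
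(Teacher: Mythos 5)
Your proof is correct: the Carath\'eodory conditions, the linear-growth majorant $a(t)+b(t)|y|$ with $a\in L^1$ and $b\in L^\infty$, and the Gronwall-plus-continuation argument are exactly the standard route to global existence here. The paper itself gives no proof but defers to \cite[Lemma 4.1]{DJKZ}, which follows the same standard Carath\'eodory existence scheme, so your argument is essentially the intended one (and has the merit of being self-contained).
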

\begin{proof} See \cite[Lemma 4.1]{DJKZ}. 
\end{proof}

We next state a positivity property on the subintervals $(0, s_*)$ and $(s_*, 1)$, which will play a key role in uniqueness and subsequent arguments. In our previous work on the monostable case with convection \cite{DJKZ}, establishing positivity without any sign condition on $c-h(t)$ was one of the main novelties.

\begin{lemma} \label{Fpositiveness}
Fix $c \in \RR$. Then $\tilde y_c(t) >0$ on $(0, s_*)$, and $\hat y_c(t) >0$ on $(s_*, 1)$. 
\end{lemma}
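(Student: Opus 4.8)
The plan is to prove the forward claim $\tilde y_c>0$ on $(0,s_*)$ in detail and to obtain the backward claim $\hat y_c>0$ on $(s_*,1)$ for free: on $(s_*,1)$ we have $f>0$, so the terminal problem \eqref{y1} restricted there is exactly the monostable situation, and positivity follows from \cite[Section~4]{DJKZ} (equivalently, by running the same argument backward from $t=1$). For the forward case I would write $y=\tilde y_c$, set $C_0:=\|c-h\|_{L^\infty(0,1)}$, and use repeatedly that on the set $\{y\le 0\}$ one has $y^+=0$, so that the equation in \eqref{y2} collapses to $y'=-p'f$ a.e. The argument splits into two steps: first that $y$ cannot dip below zero, and then that it cannot vanish at an interior point of $(0,s_*)$.

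\textbf{Step 1 (nonnegativity).} Suppose toward a contradiction that $y(\bar t)<0$ for some $\bar t\in(0,s_*]$. Since $y(0)=0$ and $y$ is continuous, put $a:=\sup\{t\in[0,\bar t):y(t)=0\}$, so that $y(a)=0$ and $y<0$ on $(a,\bar t]$. On $(a,\bar t)$ we have $y^+\equiv 0$, hence $y'=-p'f>0$ a.e. because $f<0$ on $(0,s_*)$. As $y$ is absolutely continuous, this forces $y$ to be increasing on $[a,\bar t]$, so $y(\bar t)\ge y(a)=0$, a contradiction. Thus $y\ge 0$ on $[0,s_*]$, and in particular $y^+=y$ throughout.

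\textbf{Step 2 (no interior zero).} Suppose $y(t_0)=0$ for some $t_0\in(0,s_*)$. This is where the hypotheses really bite: the nonlinearity $(y^+)^{1/p}$ is only H\"older at $0$ and $c-h$ carries no sign, so I cannot use a Lipschitz comparison, and the upper semicontinuity in \eqref{semicont} is precisely what supplies the missing pointwise control. Since $f(t_0)<0$ and $f$ is upper semicontinuous, there is $\delta>0$ with $(t_0-\delta,t_0+\delta)\subset(0,s_*)$ and $f<f(t_0)/2<0$ on it; set $m:=-p'f(t_0)/2>0$. Using $y\ge 0$ we get, a.e. on this neighborhood,
\[
 y'(t)=p'\bigl(c-h(t)\bigr)y(t)^{1/p}-p'f(t)\ \ge\ m-p'C_0\,y(t)^{1/p}.
\]
Choosing $\varepsilon>0$ with $p'C_0\varepsilon^{1/p}<m/2$, continuity of $y$ together with $y(t_0)=0$ gives $\delta'\in(0,\delta)$ such that $0\le y<\varepsilon$ on $(t_0-\delta',t_0]$, whence $y'\ge m/2>0$ a.e. on $(t_0-\delta',t_0)$. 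Then $y$ is strictly increasing there, so $y(t)<y(t_0)=0$ for $t\in(t_0-\delta',t_0)$, contradicting Step~1. Hence $y$ has no zero in $(0,s_*)$, and combining with Step~1 yields $y>0$ on $(0,s_*)$.

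I expect \textbf{Step 2} to be the main obstacle: because $(y^+)^{1/p}$ degenerates at $y=0$ and $f$ is merely $L^1$, positivity genuinely can fail without an extra hypothesis, and everything hinges on extracting the pointwise lower bound $-f\ge m>0$ near an interior zero from upper semicontinuity, so that the potentially adverse convection term $p'(c-h)y^{1/p}$ is dominated by the reaction term. Step~1 and the existence lemma are routine by comparison, and the backward statement on $(s_*,1)$, being the mirror image of the monostable analysis, I would simply cite from \cite{DJKZ}.
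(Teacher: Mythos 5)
Your proof is correct. The paper gives no argument of its own here — it simply cites \cite[Lemma 4.2]{DJKZ} — and your two-step argument (first, nonnegativity, since on any excursion below zero the equation collapses to $y'=-p'f>0$; second, exclusion of interior zeros by using the upper semicontinuity of $f$ from \eqref{semicont} to extract a pointwise bound $-p'f\ge m>0$ near a putative zero that dominates the degenerate convection term $p'(c-h)y^{1/p}$) is a faithful self-contained reconstruction of exactly the kind of positivity argument that citation stands for, correctly identifying \eqref{semicont} as the hypothesis that makes the second step work.
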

\begin{proof} See \cite[Lemma 4.2]{DJKZ}. 
\end{proof}

\smallskip

By the above lemma, the positive-part operator in $y^+$ can be omitted on $(0, s_*)$ and $(s_*, 1)$. This leads to the following uniqueness result. Compare with \cite[Lemma 4.3]{DJKZ}, the proof requires a modification to account for the maximal subinterval where the solution remains positive; for this we adapt the uniqueness argument of \cite{MMM04} to our setting.

\begin{lemma} \label{Uniqueness2}
Fix $c \in \RR$. Then $\tilde y_c=\tilde y_c(t)$ is the unique positive solution of \eqref{y2} on $[0, s_*]$,  and $\hat y_c=\hat y_c(t)$ is the unique positive solution  of \eqref{y1} on $[s_*, 1]$. 
\end{lemma}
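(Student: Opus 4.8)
The plan is to prove the forward statement, that $\tilde y_c$ is the unique positive solution of \eqref{y2} on $[0,s_*]$; the backward statement for $\hat y_c$ on $[s_*,1]$ follows by the reflection $t\mapsto 1-t$, which turns the ``$f<0$, upper semicontinuous'' data of \eqref{semicont} into the symmetric ``$f>0$, lower semicontinuous'' data, as anticipated in the discussion preceding Lemma~\ref{cara_existence}. By Lemma~\ref{Fpositiveness} every positive solution satisfies $\tilde y_c>0$ on $(0,s_*)$, so the truncation $y^+$ may be dropped and \eqref{y2} reduces to $y'=p'[(c-h(t))y^{1/p}-f(t)]$ with $f<0$ on $(0,s_*)$. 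The essential difficulty is uniqueness at the degenerate endpoint $t=0$, where $y(0)=0$ and $y\mapsto y^{1/p}$ fails to be Lipschitz.

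First I would dispose of the interior. On every compact $[a,b]\subset(0,s_*)$ the solution is bounded below by a positive constant, so $y\mapsto (c-h(t))y^{1/p}$ is Lipschitz in $y$ uniformly on $[a,b]$; hence two positive solutions that agree at one point of $(0,s_*)$ coincide on all of $(0,s_*)$ and, by continuity, also at $t=0$ and $t=s_*$. Uniqueness on $[0,s_*]$ is therefore equivalent to showing that two positive solutions $y_1,y_2$ with $y_1(0)=y_2(0)=0$ must coincide on a right neighborhood $[0,\delta]$. Setting $w=y_1-y_2$, and using that $h$ is continuous on the first subinterval $(0,k_1)$ so that $c-h(t)\to c-h(0+)$ as $t\to0+$, I would split the analysis by the sign of $c-h(0+)$. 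When $c\le h(0+)$ the nonlinearity is dissipative: testing the difference equation against $w$ gives $\tfrac12(w^2)'=p'(c-h(t))\big(y_1^{1/p}-y_2^{1/p}\big)w\le 0$, since $(y_1^{1/p}-y_2^{1/p})w\ge0$ and $c-h\le 0$; together with $w(0)=0$ this forces $w\equiv0$.

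The expansive regime $c>h(0+)$ is where the work lies. Here the mean value theorem and the monotonicity of $s\mapsto s^{1/p-1}$ yield $|w'(t)|\le \tfrac{p'}{p}\,|c-h(t)|\,m(t)^{1/p-1}|w(t)|$ with $m=\min(y_1,y_2)$, and a weighted Gronwall inequality closes the argument provided one has a lower bound $m(t)\ge\psi(t)$ with $\int_0^\delta \psi^{1/p-1}\,dt<\infty$. Such a lower bound I would obtain by comparison, building a subsolution from the forcing primitive $F(t)=\int_0^t(-f)$ and from the scale-invariant equation $y'=p'(c-h)y^{1/p}$; the lower semicontinuity of $-f$ guaranteed by \eqref{semicont} is what makes these comparison functions admissible, exactly as in the positivity argument of \cite{DJKZ}.

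The main obstacle is that, in the expansive regime, the natural lower bound scales like $t^{p'}$ (the homogeneous solution of $y'=p'(c-h)y^{1/p}$ is $\sim t^{p'}$), for which the weight $\psi^{1/p-1}\asymp t^{-1}$ is only borderline non-integrable, so a naive Gronwall estimate fails. This is precisely the degeneracy forcing the refined treatment of \cite{MMM04}, which I would adapt as follows: identify the unique admissible leading coefficient $C>0$ in the expansion $y(t)\sim C\,t^{p'}$ as $t\to0+$, determined by a scalar balance among the convection term $c-h(0+)$, the $p$-homogeneous diffusion scaling, and the leading behavior of $f$ (here upper semicontinuity of $f$ pins down that leading part); then observe that the linearized difference equation forces any nontrivial $w$ to grow like $t^{\kappa}$ with $\kappa<p'$, which is incompatible with $w=o(t^{p'})$, whence $w\equiv0$. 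The assertion that each statement extends to the maximal subinterval on which the solution stays positive is then automatic, since both Lemma~\ref{Fpositiveness} and the endpoint analysis above apply verbatim on any such interval.
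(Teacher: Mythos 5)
Your reduction to uniqueness near the degenerate endpoint $t=0$ is sound (interior uniqueness via local Lipschitz continuity is fine, and the dissipative computation works when $c-h(t)\le 0$ pointwise near $0$), but there is a genuine gap in the expansive regime, which is exactly where the difficulty lies. Your argument there rests on the assertion that every positive solution admits an expansion $y(t)\sim C\,t^{p'}$ as $t\to 0^+$ with a \emph{uniquely determined} $C>0$ ``pinned down'' by the leading behavior of $f$. Under the standing hypotheses $f$ is only an $L^1$ function, negative and upper semicontinuous on $(0,s_*)$ with $f(0)=0$; it has no prescribed rate at $0$, and the forcing primitive $\int_0^t(-f)$ may dominate $t^{p'}$, be dominated by it, or alternate between the two regimes. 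So the expansion is not available in general, and even granting it, ``identifying the unique admissible $C$'' presupposes the uniqueness you are trying to prove. The subsequent claim that the linearized difference equation forces any nontrivial $w$ to grow like $t^{\kappa}$ with $\kappa<p'$ depends quantitatively on $C$ and on $c-h(0^+)$ and is not justified. There is also an untreated borderline case: if $c=h(0^+)$ but $h(t)<h(0^+)$ for $t$ near $0$, then $c-h(t)>0$ there, so the dissipative computation does not apply, while you are outside the announced expansive case as well.

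The paper's proof avoids all asymptotics at $t=0$ by the substitution $\tilde z=\tilde y^{1/p'}$, which turns \eqref{y2} into $\tilde z'=c-h(t)-f(t)\,\tilde z^{-1/(p-1)}$: the non-Lipschitz term $(c-h)y^{1/p}$ becomes the additive constant $c-h(t)$ and cancels upon subtraction, and the difference $\omega=\tilde z_1-\tilde z_2$ satisfies a first-order \emph{linear} equation whose coefficient carries the sign of $f<0$. Anchoring the integrating factor at $s_*$ as in \eqref{IF} keeps it in $(0,1]$, so $\omega F$ is constant, and $\omega(0)=0$ forces $\omega\equiv 0$ on all of $[0,s_*]$ — with no case split on the sign of $c-h$, no lower bound on $\min(y_1,y_2)$, and no expansion of $y$ near $0$. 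To salvage your route you would essentially have to rediscover this change of variable (equivalently, a weighted Gronwall inequality with weight $\exp\bigl[\int_t^{s_*} f/(\cdots)\,d\tau\bigr]$); as written, the expansive case is not proved.
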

\begin{proof} We prove the uniqueness for $\tilde y_c$ on $(0, s_*)$ only. Suppose that $\tilde y_1=\tilde y_1(t)$ and $\tilde y_2=\tilde y_2(t)$ are solutions of \eqref{y2} for a fixed $c\in \RR$. Since both are positive, set
\be
\tilde z_1(t):=(\tilde y_1(t))^{\frac{1}{p'}}>0 \quad \text{and} \quad \tilde z_2 (t):=(\tilde y_2(t))^{\frac{1}{p'}}>0, \quad t \in (0, s_*).
\notag
\ee
Recalling \eqref{y2} we directly compute
\be
\tilde z_1'(t) =c-h(t)-\frac{f(t)}{(\tilde z_1(t))^{1/(p-1)}} \quad \text{and} \quad
\tilde z_2'(t)  =c-h(t)-\frac{f(t)}{(\tilde z_2(t))^{1/(p-1)}}, \quad a.e. ~t \in (0, s_*).
\notag
\ee
Subtracting gives
\be \label{un1}
\begin{split}
(\tilde z_{1}-\tilde z_{2})^{\prime}(t)
& = \frac{f(t)}{(\tilde z_{1}(t))^{\frac{1}{p-1}} (\tilde z_{2}(t))^{\frac{1}{p-1}}} \Big[(\tilde z_{1}(t))^{\frac{1}{p-1}}- (\tilde z_{2}(t))^{\frac{1}{p-1}}\Big], \quad a.e. ~t \in (0, s_*).
\end{split}
\ee 
Applying the mean value theorem to $v \mapsto v^{\frac{1}{p-1}}$, for each $t \in (0, s_*)$ there exists $\theta(t)>0$ between $\tilde z_{1}(t)$ and $\tilde z_{2}(t)$ such that 
\be
\tilde z_{1}(t)-\tilde z_{2}(t)=\Big[(\tilde z_{1}(t))^{\frac{1}{p-1}}- (\tilde z_{2}(t))^{\frac{1}{p-1}}\Big](p-1)(\theta(t))^{1-\frac{1}{p-1}}.
\notag
\ee
Substituting this into \eqref{un1} and introducing a function
\be
\o(t):=\tilde z_{1}(t)- \tilde z_{2}(t) \quad \text{on} \quad [0, s_*], 
\notag
\ee
we deduce
\be
\o^{\prime}(t)=\frac{f(t)}{(p-1)(\tilde z_{1}(t))^{\frac{1}{p-1}} (\tilde z_{2}(t))^{\frac{1}{p-1}}(\theta(t))^{1-\frac{1}{p-1}}} \o(t), \quad a.e. ~t \in (0, s_*),
\notag
\ee
which is a first-order linear differential equation. We now define an integrating factor as
\be \label{IF}
F(t):=exp \Big[\int_{t}^{s_*} \frac{f(\tau)}{(p-1)(\tilde z_{1}(\tau))^{\frac{1}{p-1}} (\tilde z_{2}(\tau))^{\frac{1}{p-1}}(\theta(\tau))^{1-\frac{1}{p-1}}} d\tau \Big], \quad t \in (0, s_*]. 
\ee
Since $f<0$ on $(0, s_*]$ and the integrand lies in $L^1(t, s_*)$ for all $t \in (0, s_*)$, $0<F(t) \leq 1$ on $(0, s_*]$, and 
\be
(\o F)^{\prime}(t)=0, \quad a.e. ~t \in (0, s_*). 
\notag
\ee
Therefore,  $(\o F)(t)$ is constant on $[0, s_*]$. Since $(\o F)(0^+)=0$, it follows that $\o(t) \equiv 0$, and hence $\tilde z_{1}(t)=\tilde z_{2}(t)$ on $[0, s_*]$. 
\end{proof}

\begin{remark} \label{uniqueness of ps} The uniqueness result above extends  to the entire positivity subintervals of $\tilde y_c$ and $\hat y_c$. Fix $c\in\RR$. If $\tilde y_1$ and $\tilde y_2$ are positive solutions of \eqref{y2} on $(0,t_*)$ with $s_*<t_* <1$, then by Lemma~\ref{Uniqueness2} they coincide on $[0,s_*]$. Repeating the same argument on $[s_*,t_*]$ (with $f>0$) and introducing
\be
F(t):=exp \Big[\int_{t}^{s_*} \frac{f(\tau)}{(p-1)(\tilde z_{1}(\tau))^{\frac{1}{p-1}} (\tilde z_{2}(\tau))^{\frac{1}{p-1}}(\theta(\tau))^{1-\frac{1}{p-1}}} d\tau \Big], \quad t \in [s_*, t_*),   
\notag
\ee
one again obtains $\tilde y_1\equiv\tilde y_2$ on $[s_*,t_*]$. Hence $\tilde y_c$ is unique on its entire positivity interval; the same holds for $\hat y_c$.

Moreover, $\tilde y_c$ and $\hat y_c$ cannot intersect at a positive value unless they vanish simultaneously at $s_*$ or coincide identically. Suppose, to the contrary, that there exists $t_0\in(0,1)$ with $\tilde y_c(t_0)=\hat y_c(t_0)>0$ but $\tilde y_c\ne\hat y_c$ on a right neighborhood of $t_0$. Choose $t_1$ with $t_0 < t_1 <1$ such that $f$ has constant sign on $(t_0, t_1)$ and  both solutions remain positive on $[t_0, t_1]$. Setting $\o(t)=\tilde z_c(t)- \hat z_c(t)$ and introducing 
\be
F(t):=exp \Big[\int_{t}^{t_*} \frac{f(\tau)}{(p-1)(\tilde z_{c}(\tau))^{\frac{1}{p-1}} (\hat z_{c}(\tau))^{\frac{1}{p-1}}(\theta(\tau))^{1-\frac{1}{p-1}}} d\tau \Big], \quad t \in (t_0, t_1)
\notag
\ee 
with $t_*:=t_0$ if $f>0$ on $(t_0, t_1)$ and $t_*:=t_1$ if $f<0$ on $(t_0, t_1)$,  the same computation as before yields $\tilde z_c \equiv \hat z_c$ on $[t_0, t_1]$, a contradiction.  Likewise, if $\tilde y_c(t_1)=\hat y_c(t_1)>0$ but $\tilde y_c$ and $\hat y_c$ differ on a left neighborhood of $t_1$, choose $0<t_0<t_1$ so that $f$ has constant sign on $(t_0, t_1)$ and $\tilde y_c, \hat y_c>0$ on $[t_0, t_1]$. Then the preceding argument applies on $[t_0, t_1]$, yields a contradiction.  
\end{remark}

\smallskip

The uniqueness established above, together with the continuity in $c$ of $$\Gamma (t, y, c): =p'[(c-h(t))(y^{+}(t))^{1/p}-f(t)],$$ implies continuous dependence of $\tilde y_c$ and $\hat y_c$ on the parameter $c$. This follows from standard results on Carath\'{e}odory differential equations (see, e.g., Theorem $4.1$ and $4.2$ in Chapter $2$ of \cite{CL}). For completeness we record the statement below without proof.

\begin{lemma} \label{Cdop}
Fix $c_0 \in \RR$. As $c \rightarrow c_0$, the solutions $\tilde y_c=\tilde y_c(t)$ of \eqref{y2} $($resp. $\hat y_c=\hat y_c(t)$ of \eqref{y2}$)$ converge to $\tilde y_{c_0}$ $($resp. $\hat y_{c_0}$$)$ uniformly on every closed subinterval on which all these solutions remain positive. 
\end{lemma}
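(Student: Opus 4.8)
The plan is to prove the forward statement for $\tilde y_c$; the terminal case for $\hat y_c$ is entirely symmetric. Fix $c_0\in\RR$, a bounded neighborhood $I$ of $c_0$, and a closed subinterval $[a,b]$ on which $\tilde y_c>0$ for every $c\in I$. Because the right-hand side $\Gamma(t,y,c)=p'[(c-h(t))(y^+)^{1/p}-f(t)]$ fails to be Lipschitz in $y$ at $y=0$ (here $1/p<1$), a direct Gronwall estimate anchored at the initial point $t=0$ is unavailable. Instead I would run the compactness-and-uniqueness scheme underlying Theorems~4.1--4.2 of \cite{CL}: show that the family $\{\tilde y_c\}_{c\in I}$ is precompact in $C[0,1]$, that every subsequential limit solves the limit problem, and that the uniqueness results (Lemma~\ref{Uniqueness2} and Remark~\ref{uniqueness of ps}) force this limit to be $\tilde y_{c_0}$.

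For precompactness I would first establish a uniform a priori bound. Writing $M_c:=\max_{[0,1]}\tilde y_c$ and integrating the equation from $0$, the sublinear growth of $(y^+)^{1/p}$ yields
\be
M_c \le p'(|c|+\|h\|_{L^\infty})\,M_c^{1/p}+p'\|f\|_{L^1},
\notag
\ee
and since $1/p<1$ this forces a bound $M_c\le M$ uniform over the bounded set $I$. Equicontinuity then follows from $|\tilde y_c(t)-\tilde y_c(s)|\le p'(|c|+\|h\|_{L^\infty})M^{1/p}|t-s|+p'\int_s^t|f|$, the last term being uniformly small by absolute continuity of $\tau\mapsto\int_0^\tau|f|$. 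Arzel\`a--Ascoli then gives precompactness in $C[0,1]$. Given any sequence $c_n\to c_0$, I would extract a uniformly convergent subsequence $\tilde y_{c_n}\to\bar y$ and pass to the limit in the integral form $\tilde y_{c_n}(t)=\int_0^t\Gamma(s,\tilde y_{c_n}(s),c_n)\,ds$; the integrand converges pointwise a.e.\ and is dominated by $p'[(1+|c_0|+\|h\|_{L^\infty})M^{1/p}+|f|]\in L^1$, so dominated convergence shows $\bar y$ is a Carath\'eodory solution of \eqref{y2} with $c=c_0$.

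It remains to identify $\bar y=\tilde y_{c_0}$ on $[a,b]$. By Lemma~\ref{Fpositiveness}, $\bar y>0$ on $(0,s_*)$, and by Lemma~\ref{Uniqueness2} the positive solution there is unique, so $\bar y\equiv\tilde y_{c_0}$ on $[0,s_*]$; in particular they share the positive value $\tilde y_{c_0}(s_*)$. Continuing past $s_*$, if $\bar y$ first vanished at some $t^\dagger\in(s_*,b]$, then uniqueness on the positivity interval (Remark~\ref{uniqueness of ps}) would give $\bar y\equiv\tilde y_{c_0}$ on $[s_*,t^\dagger)$, whence $\bar y(t^\dagger)=\tilde y_{c_0}(t^\dagger)>0$ by continuity, contradicting $\bar y(t^\dagger)=0$. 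Thus $\bar y>0$ on all of $[a,b]$ and $\bar y\equiv\tilde y_{c_0}$ there. Since every subsequence of $\{\tilde y_c\}$ admits a further subsequence converging uniformly on $[a,b]$ to the same limit $\tilde y_{c_0}$, the whole family converges uniformly on $[a,b]$ as $c\to c_0$, which is the claim.

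The main obstacle is precisely the loss of Lipschitz regularity at $y=0$, which blocks the classical Gronwall route and makes the whole argument hinge on uniqueness: it is the sign structure of $f$, exploited through the integrating-factor computation of Lemma~\ref{Uniqueness2}, that singles out a single positive solution and thereby pins down every subsequential limit. A secondary technical point is verifying that the limit $\bar y$ cannot touch zero prematurely inside $[a,b]$, which I handle above by continuing the uniqueness across the degeneracy point $s_*$.
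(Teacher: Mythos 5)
Your proposal is correct and takes essentially the same route as the paper: the paper gives no proof at all, stating that the lemma ``follows from standard results on Carath\'eodory differential equations'' (Theorems 4.1--4.2 in Chapter 2 of \cite{CL}) combined with the uniqueness already established, and your compactness-plus-uniqueness scheme (a priori bound via the sublinearity of $(y^+)^{1/p}$, Arzel\`a--Ascoli, passage to the limit in the integral equation, and identification of the limit through Lemma~\ref{Uniqueness2} and Remark~\ref{uniqueness of ps}) is precisely the standard argument behind those cited theorems. The only point worth noting is that your contradiction step needs $\tilde y_{c_0}(t^\dagger)>0$ even when $t^\dagger<a$, which follows because a forward solution that touches zero on $(s_*,1)$, where $f>0$, immediately becomes negative and cannot return, so the positivity set beyond $s_*$ is an interval; this is a one-line addendum rather than a gap.
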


\smallskip

Next, we state the comparison results, which will be crucial for solving the BVPs  on $[0, s_*]$ and $[s_*, 1]$ in Section \ref{BVP on subintervals}. The following lemma is a simplified version of \cite[Lemma 4.5]{DJKZ}, obtained by merging alternatives $(a)$ and $(b)$. As the proof is identical to that in \cite{DJKZ}, we omit it here.

\begin{lemma} \label{Flecom} Fix $c \in \RR$. Then the following hold:  \\
(i) Let $\tilde y_1$ and $\tilde y_2$ be positive functions on $(0, s_*)$ satisfying
\be
\tilde y_1'(t) \leq \Gamma(t, \tilde y_1(t), c), \quad \tilde y_2'(t) \geq \Gamma(t, \tilde y_2(t), c) \quad a.e.~ t \in (0, s_*). 
\notag
\ee
If $\tilde y_1(0) \leq \tilde y_2(0)$, then $\tilde y_1(t) \leq \tilde y_2(t)$ for all $t \in [0, s_*]$. 

\smallskip

\noindent (ii) Let $\hat y_1$ and $\hat y_2$ be positive functions on $(s_*, 1)$ satisfying
\be
\hat y_1'(t) \leq \Gamma(t, \hat y_1(t), c), \quad \hat y_2'(t) \geq \Gamma(t, \hat y_2(t), c) \quad a.e. ~ t \in (s_*, 1). 
\notag
\ee
If $\hat y_1(1) \geq \hat y_2(1)$, then $\hat y_1(t) \geq \hat y_2(t)$ for all $t \in [s_*, 1]$. 
\end{lemma}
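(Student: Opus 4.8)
The plan is to prove part (i) in full and obtain part (ii) by the symmetric backward argument, mirroring the strategy already used in the proof of Lemma~\ref{Uniqueness2}. Since $\tilde y_1,\tilde y_2$ are positive on $(0,s_*)$, I would first linearize the degenerate growth by the substitution $z_i:=\tilde y_i^{1/p'}$. Because $v\mapsto v^{1/p'}$ is strictly increasing and $C^1$ on every compact subinterval of $(0,\infty)$, the sign of $\omega:=z_1-z_2$ coincides with that of $\tilde y_1-\tilde y_2$, each $z_i$ is absolutely continuous on every compact subinterval of $(0,s_*)$, and the two differential inequalities transform into
$$ z_1'(t)\le (c-h(t))-\frac{f(t)}{z_1(t)^{1/(p-1)}},\qquad z_2'(t)\ge (c-h(t))-\frac{f(t)}{z_2(t)^{1/(p-1)}}\quad a.e. $$
The decisive feature is that the indeterminate-sign term $(c-h(t))$ cancels upon subtraction.

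Next I would subtract and apply the mean value theorem to $v\mapsto v^{-1/(p-1)}$, exactly as in Lemma~\ref{Uniqueness2}: there is $\theta(t)$ between $z_1(t)$ and $z_2(t)$ with
$$ \omega'(t)\ \le\ \frac{f(t)}{(p-1)\,\theta(t)^{p/(p-1)}}\,\omega(t)=:a(t)\,\omega(t)\quad a.e.\ t\in(0,s_*). $$
The key observation is that $f<0$ on $(0,s_*)$ forces $a(t)\le0$. The conclusion then follows from a one-sided maximum-principle argument. Assuming $\omega(t_1)>0$ for some $t_1$, I set $\tau:=\sup\{t<t_1:\omega(t)\le0\}$, which is well defined because $\omega(0)\le0$ by the hypothesis $\tilde y_1(0)\le\tilde y_2(0)$, and which satisfies $\omega(\tau)=0$ and $\omega>0$ on $(\tau,t_1]$ by continuity. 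On $(\tau,t_1]$ we have $a\omega\le0$, hence $\omega'\le0$ a.e.; since $\theta$ is bounded below by a positive constant on each $[\tau+\varepsilon,t_1]$, the coefficient $a$ is bounded there, so $\omega$ is non-increasing on $(\tau,t_1]$. Thus $\omega(t_1)\le\omega(t)$ for $t\in(\tau,t_1)$, and letting $t\to\tau^+$ gives $\omega(t_1)\le\omega(\tau)=0$, a contradiction. (Equivalently, one may run the bounded integrating factor $F(t)=\exp\int_t^{t_1}a$, which satisfies $0<F\le1$ precisely because $a\le0$; this is the device used in Lemma~\ref{Uniqueness2}.)

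For part (ii), the same substitution and mean value step give $\omega'\le a\omega$ with the identical coefficient, but now $f>0$ on $(s_*,1)$ makes $a\ge0$ and the comparison runs backward from $t=1$, where $\omega(1)\ge0$ by $\hat y_1(1)\ge\hat y_2(1)$. If $\omega(t_1)<0$ for some $t_1\in[s_*,1)$, I set $\tau:=\inf\{t>t_1:\omega(t)\ge0\}$, so $\omega(\tau)=0$ and $\omega<0$ on $[t_1,\tau)$; there $a\omega\le0$, hence $\omega'\le0$ a.e., forcing $\omega$ to be non-increasing and thus $\omega(t_1)\ge\lim_{t\to\tau^-}\omega(t)=\omega(\tau)=0$, a contradiction.

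I expect the main obstacle to be the lack of Lipschitz continuity of $(y^+)^{1/p}$ at $y=0$ together with the possible vanishing of the solutions at the endpoints, which makes $a$ non-integrable near those points and blocks a naive Gronwall estimate. The resolution is structural rather than technical: the substitution $z=y^{1/p'}$ removes the degeneracy from the leading term, and the sign of $f$ on each subinterval renders $a$ sign-definite, so the one-sided monotonicity argument uses only the pointwise sign of $\omega'$ and absolute continuity on compact subintervals, never integrability of $a$ down to the singular endpoint. This is exactly why the positivity of $\tilde y_c$ and $\hat y_c$ (Lemma~\ref{Fpositiveness}) and the sign structure \eqref{fcondition} are indispensable here.
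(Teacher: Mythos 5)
Your argument is correct and follows essentially the same route as the paper's proof (which is deferred to \cite{DJKZ}) and as the neighbouring uniqueness proof of Lemma~\ref{Uniqueness2}: the substitution $z=y^{1/p'}$, the mean value theorem, and the sign-definiteness on each subinterval of the resulting coefficient $a(t)=f(t)/\big((p-1)\theta(t)^{p/(p-1)}\big)$, which lets the indefinite term $c-h(t)$ cancel. One minor imprecision: $a$ need not be \emph{bounded} on $[\tau+\varepsilon,t_1]$, since $f$ is only assumed to lie in $L^1$; what your argument actually uses (and all it needs) is that $\omega$ is absolutely continuous there with $\omega'\le a\omega\le 0$ a.e., equivalently that $a\in L^1$ on compact subintervals away from the degenerate endpoint, where $\theta$ is bounded below.
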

\begin{proof} See \cite[Lemma 4.5]{DJKZ}. 
\end{proof}

\smallskip

Lastly, we establish strict monotonicity with respect to $c$: for each fixed $t \in (0, s_*)$, the map $c \mapsto \tilde y_c(t): \RR \rightarrow \RR$ is strictly increasing, whereas for each fixed $t \in (s_*, 1)$, the map $c \mapsto \hat y_c(t): \RR \rightarrow \RR$ is strictly decreasing. The non-strict monotonicity follows directly from Lemma \ref{Flecom} (see \cite[Corollary 4.1]{DJKZ}). The strict form, however, is needed to extend the conclusion to the entire positivity intervals. We therefore employ an argument analogous to the uniqueness proof.

\begin{lemma} \label{mono}Let $c_1, c_2 \in \RR$ with $c_1 < c_2$.  Then $\tilde y_{c_1}(t) < \tilde y_{c_2}(t)$ for all $t \in (0, s_*)$, while $\hat y_{c_1}(t) > \hat y_{c_2}(t)$ for all $t \in (s_*,1)$.
\end{lemma}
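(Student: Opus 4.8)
The plan is to derive the non-strict monotonicity directly from the comparison principle and then upgrade it to a strict inequality by a contradiction argument patterned on the uniqueness proof of Lemma~\ref{Uniqueness2}. I would treat the forward statement in detail; the backward one follows by the mirror argument on $(s_*,1)$. For the non-strict part, since $(y^+)^{1/p}\ge 0$ the map $c\mapsto \Gamma(t,y,c)$ is nondecreasing, so for $c_1<c_2$ the function $\tilde y_{c_1}$ is a subsolution and $\tilde y_{c_2}$ a solution of the IVP~\eqref{y2} with parameter $c_2$, both starting from $0$. Lemma~\ref{Flecom}(i) then gives $\tilde y_{c_1}\le \tilde y_{c_2}$ on $[0,s_*]$ (this is \cite[Corollary~4.1]{DJKZ}).

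To get strictness I would argue by contradiction, assuming $\tilde y_{c_1}(t_0)=\tilde y_{c_2}(t_0)$ for some $t_0\in(0,s_*)$. As in the uniqueness proof, set $\tilde z_i:=\tilde y_{c_i}^{1/p'}$, which by Lemma~\ref{Fpositiveness} are positive and absolutely continuous on every compact subinterval of $(0,s_*)$ and satisfy $\tilde z_i'=c_i-h-f\,\tilde z_i^{-1/(p-1)}$ a.e. Writing $\omega:=\tilde z_1-\tilde z_2$ and applying the mean value theorem to $\psi(v):=v^{-1/(p-1)}$, I obtain a.e. the linear equation
\[
\omega'(t)=(c_1-c_2)+b(t)\,\omega(t),\qquad b(t):=-f(t)\,\psi'(\xi(t)),
\]
where $\xi(t)$ lies between $\tilde z_1(t)$ and $\tilde z_2(t)$ (and $b:=0$ where $\omega=0$). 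The crucial new feature compared with uniqueness is the nonzero constant source term $c_1-c_2<0$.

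Fixing $a$ with $0<a<t_0$ and the integrating factor $\mu(t):=\exp\big(-\int_a^t b\big)>0$ on $[a,t_0]$, I find $(\mu\omega)'=\mu\,(c_1-c_2)<0$ a.e. Integrating over $[a,t_0]$ and using $\omega(t_0)=0$ and $\mu(a)=1$ yields $\omega(a)=-(c_1-c_2)\int_a^{t_0}\mu>0$, which contradicts the already established $\omega\le 0$. Hence no interior touching point exists and $\tilde y_{c_1}<\tilde y_{c_2}$ on $(0,s_*)$.

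The main obstacle will be the a.e./integrability issues near the degenerate endpoint $t=0$: one cannot reason pointwise at $t_0$ because the solutions are merely absolutely continuous, and the coefficient $b$ and the factor $\tilde z_i^{-1/(p-1)}$ blow up as $t\to 0^+$. Working on a compact $[a,t_0]\subset(0,s_*)$, where positivity keeps $\tilde z_1,\tilde z_2$ bounded below, the mean value theorem bound $|b|\le C|f|\in L^1(a,t_0)$ makes $\mu$ well-defined and positive and $\mu\omega$ absolutely continuous, so the fundamental theorem of calculus applies; this is exactly why the lemma is phrased on $(0,s_*)$ rather than up to the endpoints. The backward statement is symmetric: on $(s_*,1)$ one has $f>0$, hence $b\ge0$ and $\omega\ge0$, and assuming $\hat y_{c_1}(t_0)=\hat y_{c_2}(t_0)$ for $t_0\in(s_*,1)$, the same integrating-factor computation carried out forward to any $t_1\in(t_0,1)$ forces $\omega(t_1)<0$, again a contradiction, giving $\hat y_{c_1}>\hat y_{c_2}$ on $(s_*,1)$.
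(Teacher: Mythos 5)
Your proposal is correct and follows essentially the same route as the paper: the substitution $\tilde z_i=\tilde y_{c_i}^{1/p'}$, the resulting linear equation $\omega'=(c_1-c_2)+b\,\omega$ for the difference, and an integrating factor turning this into $(\mu\omega)'=(c_1-c_2)\mu<0$. The only organizational difference is that you first invoke the comparison lemma for the non-strict ordering and then rule out an interior touching point by contradiction, whereas the paper integrates directly from the endpoint where $\omega$ vanishes ($t=0$ for the IVP) and reads off strict negativity of $\omega F$ on $(0,s_*)$; both are valid and rest on the same computation.
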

\begin{proof}  We prove only the IVP. The TVP case is analogue.  Set
\be
\tilde z_{c_1}(t)=(\tilde y_{c_1}(t))^{1/p'}, \quad \tilde z_{c_2}(t)=( \tilde y_{c_2}(t))^{1/p'}, \quad \o(t):=\tilde z_{c_1}(t)- \tilde z_{c_2}(t), \quad t \in [0, s_*]. 
\notag
\ee  
Arguing as in the proof of Lemma \ref{Uniqueness2}, we obtain 
\be
\o^{\prime}(t)=c_1-c_2+\frac{f(t)}{(p-1)(\tilde z_{c_1}(t))^{\frac{1}{p-1}} (\tilde z_{c_2}(t))^{\frac{1}{p-1}}(\theta(t))^{1-\frac{1}{p-1}}} \o(t), \quad a.e. ~t \in (0, s_*), 
\notag
\ee
where $\theta(t)$ lies between $\tilde z_{c_1}(t)$ and $\tilde z_{c_2}(t)$. Define the integrating factor as in \eqref{IF}. Then we get 
\be
(\o F)^{\prime}(t)=(c_1-c_2)F(t) <0, \quad a.e. ~t \in (0, s_*). 
\notag
\ee
Thus $(\o F)(t)$ is strictly decreasing on $(0, s_*)$. Since $(\o F)(0)=0$, it follows that $\o(t)<0$ for all $t \in (0, s_*)$; that is, 
\be
\tilde z_{c_1}(t)<\tilde z_{c_2}(t) \quad \text{for all $t \in (0, s_*)$}.  
\notag
\ee
\end{proof}

\begin{remark}\label{mono_positive} The strict monotonicity with respect to $c$ extends to the entire positivity intervals of $\tilde y_c$ and $\hat y_c$. Let $c_1 < c_2$.  Assume,  for contradiction, that there exists $t_* \in (s_*, 1)$  such that  $0<\tilde y_{c_1}(t)  <  \tilde y_{c_2}(t)$ on $[s_*, t_*)$ and $\tilde y_{c_1}(t_*) = \tilde y_{c_2}(t_*)$. Viewing $\tilde y_{c_1}$ and  $\tilde y_{c_2}$ as solutions of the TVP on $[s_*, t_*]$ with terminal data at $t_*$,  the strict monotonicity in the backward setting implies $\tilde y_{c_1}(t) >  \tilde y_{c_2}(t)$ on $[s_*, t_*)$, contradicting the assumed inequality. 
\end{remark}

\smallskip

\section{Auxiliary  BVPs on $[0, s_*]$ and $[s_*, 1]$}\label{BVP on subintervals}

We now turn to the BVPs posed separately on $[0, s_*]$ and $[s_*, 1]$, depending on a parameter $c \in \RR$:
\begin{equation}\label{Fbvp}
\begin{cases}
y'(t) = p' \left[ (c-h(t)) (y (t))^{\frac{1}{p}} -f(t)  \right],  \quad t \in (0, s_*), \\
 y(0)=0=y(s_*), 
\end{cases}
\end{equation} 
and
\begin{equation}\label{Bbvp}
\begin{cases}
y'(t) = p' \left[ (c-h(t)) (y (t))^{\frac{1}{p}} -f(t)  \right],  \quad t \in (s_*,1), \\
 y(s_*)=0=y(1),
\end{cases}
\end{equation}
where $p>1$ with $\frac{1}{p}+\frac{1}{p'}=1$, $h \in L^{\infty}(0,1)$, and $f \in L^1(0,1)$ satisfying \eqref{fcondition} and \eqref{semicont}. Throughout this section, we additionally assume that $h$ satisfy \textbf{(H3)}. 

Our goal is to determine the values of $c$ for which the positive solution $\tilde y_c$ of the  IVP \eqref{y2} (resp. $\hat y_c$ of the TVP \eqref{y1}) also satisfies $\tilde y_c(s_*)=0$ (resp. $\hat y_c(s_*)=0$). Since solutions of \eqref{y2} and \eqref{y1} are positive on $(0,s_*)$ and $(s_*,1)$, respectively, the plus sign in $y^+$ has been dropped. 

The ultimate aim of this paper is to characterize values of $c$ that yield positive solutions of the full BVP \eqref{ode} on $[0,1]$. The auxiliary problems \eqref{Fbvp}–\eqref{Bbvp} serve to eliminate values of $c$ that cannot produce such solutions. Indeed, whenever $c$ admits solutions to \eqref{Fbvp} or \eqref{Bbvp}, they necessarily vanish at the interior point $s_*$, precluding any positive solution of \eqref{ode} on $[0,1]$.

In the monostable case \cite{DJKZ}, where $f>0$ on $(0,1)$, existence and nonexistence for the BVP have already been studied in detail. The results there apply directly to \eqref{Bbvp} on $[s_*,1]$. However, our later analysis revealed that the nonexistence range can be further extended (see Remark~\ref{Monononex} below). For this reason we slightly revise the nonexistence statement rather than repeating it verbatim. The problem \eqref{Fbvp} on $[0,s_*]$ is handled by symmetry. For completeness we present both existence and nonexistence results on $[0,s_*]$; the existence part follows directly from \cite{DJKZ}, while the nonexistence part is given here in a sharpened form.

Before proceeding to the detailed analysis, we first derive a necessary condition on the wave speed $c$ that must be satisfied by any solution of \eqref{Fbvp} and \eqref{Bbvp}, respectively.

\begin{lemma}\label{lFNC} 
If $\tilde y_c=\tilde y_c(t)$ is a positive solution of \eqref{Fbvp}, then 
\be \label{FNC}
c \leq h(s_*) \quad \text{and} \quad c<\frac{H(s_*)}{s_*}.
\ee
\end{lemma}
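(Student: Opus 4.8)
The plan is to prove the two inequalities by independent arguments: the bound $c \le h(s_*)$ via a local sign analysis near the right endpoint $s_*$, and the bound $c < H(s_*)/s_*$ by integrating the equation after the substitution $z = y^{1/p'}$ already employed in the proof of Lemma~\ref{Uniqueness2}. Recall that on $(0,s_*)$ the solution is positive, so $\tilde y_c^+ = \tilde y_c$ and the equation reads $\tilde y_c'(t) = \Gamma(t,\tilde y_c(t),c)$.

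For the first inequality I would argue by contradiction. Suppose $c > h(s_*)$. Since $h$ is continuous at $s_*$ by \textbf{(H3)}, there is $\delta>0$ with $c - h(t) > 0$ for all $t \in (s_* - \delta, s_*)$. On this interval $\tilde y_c > 0$ and $f < 0$ by \eqref{fcondition}, so both terms of $\Gamma(t,\tilde y_c(t),c) = p'[(c-h(t))(\tilde y_c(t))^{1/p} - f(t)]$ are strictly positive; hence $\tilde y_c'(t) > 0$ for a.e.\ $t \in (s_*-\delta, s_*)$. Being absolutely continuous with a.e.\ positive derivative, $\tilde y_c$ is strictly increasing on $(s_*-\delta, s_*)$; combined with the boundary value $\tilde y_c(s_*)=0$ this forces $\tilde y_c(t) < 0$ on $(s_*-\delta, s_*)$, contradicting positivity. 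Therefore $c \le h(s_*)$.

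For the second inequality, set $z(t) = (\tilde y_c(t))^{1/p'}$, which is positive and absolutely continuous on every compact subinterval of $(0,s_*)$. As in Lemma~\ref{Uniqueness2}, a direct computation gives $z'(t) = (c - h(t)) - f(t)/(z(t))^{1/(p-1)}$ a.e. Integrating over $[\epsilon, s_*-\delta] \subset (0,s_*)$ and rearranging,
\be
\int_\epsilon^{s_*-\delta} \frac{f(t)}{(z(t))^{1/(p-1)}}\,dt = \int_\epsilon^{s_*-\delta}(c - h(t))\,dt - \big[z(s_*-\delta) - z(\epsilon)\big].
\notag
\ee
Letting $\epsilon,\delta \to 0^+$, the bracketed boundary term tends to $0$ since $z(0)=z(s_*)=0$, and the first integral tends to $cs_* - H(s_*)$, with $H(s_*)=\int_0^{s_*}h(t)\,dt$. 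Because $f<0$ on $(0,s_*)$, the truncated integral on the left is strictly negative and non-increasing as $[\epsilon,s_*-\delta]$ expands, so its limit is bounded above by its value on any fixed subinterval, a negative constant. Passing to the limit yields $cs_* - H(s_*) < 0$, i.e.\ $c < H(s_*)/s_*$.

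The main obstacle lies in the limiting argument of the second part: the integrand $f/z^{1/(p-1)}$ is singular at both endpoints, where $z\to 0$, so $\int_0^{s_*} f/z^{1/(p-1)}$ need not be absolutely convergent a priori. The key observation is that one avoids proving integrability directly — the identity above expresses the negative truncated integral through quantities with known finite limits, and monotonicity in the truncation forces the limit to exist. The delicate point is upgrading the conclusion from $c \le H(s_*)/s_*$ to the strict inequality, which requires noting that the truncated integrals are uniformly bounded away from $0$ from above by a fixed negative constant.
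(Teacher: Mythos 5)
Your proposal is correct and follows essentially the same route as the paper: the bound $c\le h(s_*)$ by a sign/contradiction argument near $s_*$ (the paper integrates the equation over $[s_*-\delta,s_*]$ to get $\tilde y_c(s_*-\delta)<0$, which is the same computation as your monotonicity argument), and the bound $c<H(s_*)/s_*$ by dividing by $(\tilde y_c)^{1/p}$ and integrating over $(0,s_*)$. Your truncate-and-pass-to-the-limit treatment of the possibly singular integral $\int f/z^{1/(p-1)}$ is in fact more careful than the paper's, which integrates over $[0,s_*]$ directly.
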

\begin{proof} Suppose, for contradiction, that $c>h(s_*)$. Since $h$ is continuous at $s_*$, there exists $\delta>0$ such that $c>h(t)$ for all $t \in [s_*-\delta, s_*]$. Integrating \eqref{Fbvp} over $[s_*-\delta, s_*]$ gives
\be
\tilde y_c(s_*-\delta)=- \int_{s_*-\delta}^{ s_*} \tilde y_c'(t) dt = -p'\Big[\int_{s_*-\delta}^{s_*} (c-h(t))(\tilde y_c(t))^{1/p}dt-\int_{s_*-\delta}^{s_*} f(t)dt \Big]<0,
\notag
\ee 
contradicting positivity of $\tilde y_c$. Hence $c\leq  h(s_*)$.  

Next, dividing \eqref{Fbvp} by $(\tilde y_c(t))^{1/p}$ and integrating over $[0, s_*]$ gives
\be
0=cs_*-\int_0^{s_*} h(t) dt - \int_0^{s_*} \frac{f(t)}{(\tilde y_c(t))^{1/p}}dt. 
\notag
\ee
Since $f<0$ on $(0, s_*)$, it follows that
$$c < \ds \frac{1}{s_*}\int_0^{s_*} h(t) dt=\frac{H(s_*)}{s_*}. $$
\end{proof}

In a similar manner, we obtain the following necessary condition for \eqref{Bbvp}. 

\begin{lemma}\label{lBNC} 
If $\hat y_c=\hat y_c(t)$ is a positive solution of \eqref{Bbvp}, then 
\be \label{BNC}
c \geq h(s_*) \quad \text{and} \quad c > \frac{H(1)-H(s_*)}{1-s_*}. 
\ee
\end{lemma}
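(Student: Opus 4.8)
The plan is to mirror the proof of Lemma~\ref{lFNC}, exploiting the symmetry between the interval $(0,s_*)$ (where $f<0$) and $(s_*,1)$ (where $f>0$); the two inequalities in \eqref{BNC} will be obtained by two independent arguments. For the first bound $c\ge h(s_*)$ I would argue by contradiction, assuming $c<h(s_*)$. Since $h$ is continuous at $s_*$ by \textbf{(H3)}, there is $\delta>0$ with $c<h(t)$ for all $t\in[s_*,s_*+\delta]$. Integrating \eqref{Bbvp} forward from $s_*$ to $s_*+\delta$ and using $\hat y_c(s_*)=0$ gives
\be
\hat y_c(s_*+\delta)=p'\Big[\int_{s_*}^{s_*+\delta}(c-h(t))(\hat y_c(t))^{1/p}\,dt-\int_{s_*}^{s_*+\delta}f(t)\,dt\Big].
\notag
\ee
On this interval $c-h(t)<0$ while $\hat y_c\ge0$, so the first integral is nonpositive, and since $f>0$ on $(s_*,1)$ the second term is strictly negative. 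Hence $\hat y_c(s_*+\delta)<0$, contradicting positivity (Lemma~\ref{Fpositiveness}); this forces $c\ge h(s_*)$.

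For the second bound I would pass to $z:=(\hat y_c)^{1/p'}$, which satisfies $z'=(c-h)-f\,z^{-1/(p-1)}$ a.e. on $(s_*,1)$, exactly as in the uniqueness proof of Lemma~\ref{Uniqueness2}; note $z^{1/(p-1)}=(\hat y_c)^{1/p}$ since $p'(p-1)=p$. Dividing \eqref{Bbvp} by $(\hat y_c)^{1/p}$ and integrating over $[s_*,1]$, the telescoping left-hand side $\int_{s_*}^1 (\hat y_c)^{-1/p}\hat y_c'\,dt=p'\,[(\hat y_c)^{1/p'}]_{s_*}^1$ vanishes by the boundary data, yielding the identity
\be
0=c(1-s_*)-\big(H(1)-H(s_*)\big)-\int_{s_*}^1\frac{f(t)}{(\hat y_c(t))^{1/p}}\,dt.
\notag
\ee
Because $f>0$ and $\hat y_c>0$ on $(s_*,1)$, the remaining integral is strictly positive, which gives $c(1-s_*)>H(1)-H(s_*)$, i.e.\ the claimed strict inequality.

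The one point requiring care, and the main obstacle, is the boundary behavior of $\int_{s_*}^1 f/(\hat y_c)^{1/p}\,dt$, since $\hat y_c$ vanishes at both endpoints so the integrand is a priori singular there and $z$ need not be absolutely continuous up to $s_*$ and $1$. To make the computation rigorous I would first integrate the $z$-equation over a compact subinterval $[a,b]\subset(s_*,1)$, where $\hat y_c$ is bounded away from $0$ and $v\mapsto v^{1/p'}$ is Lipschitz, so that $z$ is genuinely absolutely continuous and the fundamental theorem of calculus applies: $z(b)-z(a)=\int_a^b(c-h)\,dt-\int_a^b f\,z^{-1/(p-1)}\,dt$. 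Letting $a\to s_*^+$ and $b\to1^-$, the boundary terms $z(a),z(b)\to0$ by continuity, the first integral converges to $c(1-s_*)-(H(1)-H(s_*))$, and the positive integrand integral increases monotonically to $\int_{s_*}^1 f/(\hat y_c)^{1/p}\,dt\in(0,+\infty]$. Finiteness of the left-hand side then forces this limit to be finite and strictly positive, which both justifies the displayed identity and yields the strict inequality, with no extra integrability hypothesis needed.
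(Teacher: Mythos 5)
Your proposal is correct and is essentially the paper's own argument: the paper proves Lemma~\ref{lBNC} simply by remarking that it follows ``in a similar manner'' to Lemma~\ref{lFNC}, i.e.\ by the same contradiction-via-local-integration near $s_*$ for the first bound and division by $(\hat y_c)^{1/p}$ followed by integration over $[s_*,1]$ for the second, exactly as you do. Your additional care with the possibly singular integrand at the endpoints (integrating over compact subintervals and passing to the limit) is a rigor point the paper glosses over, but it does not change the argument.
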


\medskip

We are now ready to determine the values of $c$ such that the BVPs \eqref{Fbvp} and \eqref{Bbvp} admit positive solutions on $(0,s_*)$ and $(s_*,1)$, respectively. For convenience, we introduce the following notation, to be used in the subsequent analysis:
\be \label{Nmu}
\begin{split}
& \tilde \mu := \sup_{t \in (0, s_*)} \frac{-f(t)}{(s_*-t)^{p'-1}}, \quad  \hat \mu := \sup_{t\in (s_*,1)} \frac{f(t)}{(t-s_*)^{p'-1}}, \\
& \tilde \nu:=\liminf_{t \rightarrow s_*^-}\frac{-f(t)}{(s_*-t)^{p'-1}}, \quad \hat \nu:=\liminf_{t \rightarrow s_*^+} \frac{f(t)}{(t-s_*)^{p'-1}}.
\end{split}
\ee
We also recall that $h_m$ denotes the infimum of $h$ on $[0,s_*]$, while $h_M$ denotes the supremum of $h$ on $[s_*,1]$.

\begin{theorem} [Existence for \eqref{Fbvp}] \label{Fexthm}
Suppose that 
\be \label{Fmu}
0< \tilde \mu < + \infty.
\ee
Then, if $h(s_*)< \frac{H(s_*)}{s_*}$,  there exists a number
\be\label{c_F1}
 c_F \in [h_m -(p')^{\frac{1}{p'}} p^{\frac{1}{p} }\tilde \mu^{\frac{1}{p'}}, ~h(s_*)]
\ee
such that the problem \eqref{Fbvp} admits a unique positive solution if and only if $c \leq  c_F$. On the other hand, if $h(s_*) \geq \frac{H(s_*)}{s_*}$, the same result holds with 
\be\label{c_F2}
 c_F \in [h_m -(p')^{\frac{1}{p'}} p^{\frac{1}{p} }\tilde \mu^{\frac{1}{p'}}, ~\frac{H(s_*)}{s_*}). 
\ee
\end{theorem}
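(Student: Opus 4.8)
The plan is to reduce the boundary value problem \eqref{Fbvp} to a statement about the single initial value solution $\tilde y_c$ of \eqref{y2}, and then to locate the threshold $c_F$ by a monotonicity argument. By Lemma~\ref{Fpositiveness}, $\tilde y_c>0$ on $(0,s_*)$, and by Lemma~\ref{Uniqueness2} together with Remark~\ref{uniqueness of ps} it is the \emph{only} positive solution of \eqref{y2} on its positivity interval; hence $\tilde y_c|_{[0,s_*]}$ solves \eqref{Fbvp} if and only if $\tilde y_c(s_*)=0$, and in that case it is the unique positive solution. So the entire theorem reduces to analyzing the zero set of the function $c\mapsto \tilde y_c(s_*)$, which is nondecreasing by Lemma~\ref{mono} and Remark~\ref{mono_positive}. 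I would set $c_F:=\sup\{c\in\RR:\tilde y_c(s_*)=0\}$ and show this zero set is exactly $(-\infty,c_F]$; combined with uniqueness this is precisely the ``if and only if'' assertion.

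For the lower bound on $c_F$ I would exhibit an explicit power-type supersolution $w(t):=K(s_*-t)^{p'}$ with $K>0$ to be chosen. Using $p'/p=p'-1$ one finds $w^{1/p}=K^{1/p}(s_*-t)^{p'-1}$ and $w'=-p'K(s_*-t)^{p'-1}$, so after dividing by $p'(s_*-t)^{p'-1}>0$ the supersolution inequality $w'\ge \Gamma(t,w,c)$ becomes
\[
\frac{-f(t)}{(s_*-t)^{p'-1}}\le \big(h(t)-c\big)K^{1/p}-K .
\]
Since $h(t)\ge h_m$ on $(0,s_*)$ and the left-hand side is $\le\tilde\mu$ by \eqref{Nmu}, writing $\beta:=h_m-c>0$ it suffices that $\tilde\mu\le \beta K^{1/p}-K$. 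Maximizing the right-hand side over $K$ at $K=(\beta/p)^{p'}$ gives the value $\beta^{p'}/(p'p^{p'-1})$, so a supersolution vanishing at $s_*$ exists whenever $\beta\ge (p')^{1/p'}p^{1/p}\tilde\mu^{1/p'}$, i.e.\ $c\le h_m-(p')^{1/p'}p^{1/p}\tilde\mu^{1/p'}$. For such $c$, Lemma~\ref{Flecom}(i) with $w(0)>0=\tilde y_c(0)$ yields $\tilde y_c\le w$ on $[0,s_*]$, hence $\tilde y_c(s_*)=0$. This shows the zero set is nonempty and contains the interval of \eqref{c_F1}--\eqref{c_F2} from the left, giving $c_F\ge h_m-(p')^{1/p'}p^{1/p}\tilde\mu^{1/p'}$.

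For the upper bound I would invoke Lemma~\ref{lFNC}: every $c$ in the zero set yields a positive solution of \eqref{Fbvp}, hence satisfies $c\le h(s_*)$ and $c<H(s_*)/s_*$. The remaining and more delicate point is that the supremum is attained, i.e.\ $\tilde y_{c_F}(s_*)=0$ (equivalently, that $c\mapsto\tilde y_c(s_*)$ is left-continuous at $c_F$), so that Lemma~\ref{lFNC} may be applied to $c_F$ itself; the two cases $h(s_*)<H(s_*)/s_*$ and $h(s_*)\ge H(s_*)/s_*$ then give exactly \eqref{c_F1} and \eqref{c_F2}. To attain the supremum I would take $c_n\uparrow c_F$ with $\tilde y_{c_n}(s_*)=0$ and argue by contradiction that $m:=\tilde y_{c_F}(s_*)>0$: fixing a small $\epsilon>0$ with $\tilde y_{c_F}(s_*-\epsilon)>m/2$, continuous dependence (Lemma~\ref{Cdop}) on a closed subinterval of $(0,s_*)$ ending at $s_*-\epsilon$ gives $\tilde y_{c_n}(s_*-\epsilon)>m/4$ for large $n$, while on $(s_*-\epsilon,s_*)$ the sign $f<0$ and the boundedness of $h$ and of $\{c_n\}$ yield a differential inequality $\tilde y_{c_n}'\ge -p'C\,\tilde y_{c_n}^{1/p}$ with a fixed constant $C$; comparison with the explicitly solved ODE $z'=-p'Cz^{1/p}$, $z(s_*-\epsilon)=m/4$, forces $\tilde y_{c_n}(s_*)\ge z(s_*)>0$ for $\epsilon$ small, contradicting $\tilde y_{c_n}(s_*)=0$. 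Monotonicity together with this attainment shows the zero set equals $(-\infty,c_F]$, and for $c>c_F$ the uniqueness of $\tilde y_c$ rules out any positive solution of \eqref{Fbvp}.

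The main obstacle is the degeneracy at the interior endpoint $s_*$, where $f(s_*)=0$ and the candidate profiles vanish. This is what makes both the sharp lower bound, which requires the precise power $(s_*-t)^{p'}$ tuned to $\tilde\mu$, and the attainment of the supremum subtle: the ordinary continuous dependence of Lemma~\ref{Cdop} degenerates exactly at $s_*$, so the left-continuity of $c\mapsto\tilde y_c(s_*)$ must be recovered by the barrier/ODE-comparison argument near $s_*$ rather than read off directly. Everything else -- positivity, uniqueness, and monotonicity in $c$ -- is supplied by the lemmas of Section~\ref{Forward and Backward Initial Value Problems}.
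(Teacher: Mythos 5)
Your proposal is correct and follows essentially the same route as the paper: reduce to the zero set of $c\mapsto\tilde y_c(s_*)$, obtain the lower bound via the power-type barrier $K(s_*-t)^{p'}$ with the optimal $K=((h_m-c)/p)^{p'}$ (identical to the paper's $\varphi$), get the upper bound from Lemma~\ref{lFNC}, and show the supremum is attained. Your only deviation is the extra barrier/comparison argument near $s_*$ to establish attainment, where the paper simply invokes Lemma~\ref{Cdop} on all of $[0,s_*]$; your more careful version is a legitimate (indeed slightly more rigorous) patch, since the solutions vanish at $s_*$ and the literal statement of Lemma~\ref{Cdop} only covers closed subintervals of common positivity.
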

\begin{proof}  
We claim that $\tilde y_c(s_*) =0 $ for $c$ sufficiently small.  That is, for $c$ sufficiently small  a solution $\tilde y_c=\tilde y_c(t)$ on $[0, s_*]$ of \eqref{y2} turns out to be a positive solution of \eqref{Fbvp}.  For $c < h_m$, define
\be
\phi_c (s):= (h_m- c) (s_*-s)^{ \frac{1}{p} } -(s_*-s), \quad s \in (s_*-(h_m-c)^{p'}, s_*).
\notag
\ee
Then $\phi_c(s) >0$ for all $s \in (s_*-(h_m-c)^{p'}, s_*)$, and it satisfies $ \phi_c(s_*) = \phi_c(s_*-(h_m-c)^{p'})=0$. Moreover,  $\phi_c$ attains its maximum value
$$M_c := \left( \frac{h_m-c}{p} \right)^{p'}(p-1)~  \mbox{at the point} ~  k:= s_*-\left(\frac{h_m-c}{p} \right)^{p'} \in (s_*-(h_m- c)^{p'}, s_*).$$
A direct calculation shows that $h_m-c \geq (p')^{\frac{1}{p'}} p^{\frac{1}{p}} \tilde \mu^{\frac{1}{p'}}$ if and only if $ M_c \geq \tilde \mu$, which implies that for $c \leq h_m- (p')^{\frac{1}{p'}} p^{\frac{1}{p}} \tilde \mu^{\frac{1}{p'}}$, we have $\phi_c (k) \geq \tilde \mu$, that is,
\begin{equation} \label{Fbd}
(h_m-c) (s_*- k)^{\frac{1}{p}} -\tilde \mu \geq s_*- k.
\end{equation}
We define
\be
\varphi(t) := (s_*- k)(s_*-t)^{p'}, \quad  t \in [0, s_*].  
\notag
\ee
Then $\varphi(0) >0$, and it satisfies
\be
\begin{split}
& \varphi'(t)-\Gamma(t, \varphi(t), c) \\
& = -(s_*-k)p' (s_*-t)^{p'-1} -p' \Big[(c-h(t))(\varphi(t))^{\frac{1}{p}} -f(t)\Big]\\
& \geq \Big[ (c-h_m) (s_*-k)^{\frac{1}{p}} +\tilde \mu \Big]p' (s_*-t)^{p'-1} -p' \Big[(c-h_m)(\varphi(t))^{\frac{1}{p}} -f(t)\Big] \\
& \geq \Big[ (c-h_m) (s_*-k)^{\frac{1}{p}} + \tilde \mu \Big]p' (s_*-t)^{p'-1} -p' \Big[(c-h_m)(\varphi(t))^{\frac{1}{p}} + \tilde \mu (s_*-t)^{p'-1}  \Big] \\
& =0.
\end{split}
\notag
\ee
Here, we have used \eqref{Fmu} and \eqref{Fbd}. By Lemma \ref{Flecom}, it follows that
\be
0 \leq y_c(t ) \leq \varphi(t), \quad   t \in [0, s_*].
\notag
\ee
Since $\varphi(s_*)=0$, we conclude that $\tilde y_c(s_*)=0$. Therefore, for any $c \leq   h_m-(p')^{\frac{1}{p'}} p^{\frac{1}{p}} \tilde \mu^{\frac{1}{p'}}$, the unique solution $\tilde y_c=\tilde y_c(t)$ of \eqref{y2} is also a positive solution of \eqref{Fbvp}.

\smallskip

Now, recalling \eqref{FNC}, set
\be
c_F :=  \sup \{ c \leq h(s_*) \quad \text{and} \quad c<\frac{H(s_*)}{s_*} : ~ \mbox{\eqref{Fbvp} has a unique positive solution}\}.
\notag
\ee
From this definition, we immediately obtain the bounds
\be \label{rangecF}
h_m-(p')^{\frac{1}{p'}} p^{\frac{1}{p}} \tilde \mu^{\frac{1}{p'}} \leq c_F \leq \min\{h(s_*), \frac{H(s_*)}{s_*}\}. 
\ee
Let $\{ c_n\}$ be a sequence with $c_n \leq c_F$ such that $c_n \rightarrow c_F$,  and let $\tilde y_{c_n}=\tilde y_{c_n}(t)$ be a positive solution of \eqref{Fbvp}. By Lemma \ref{Cdop}, the sequence $\tilde y_{c_n}$ converges uniformly to $\tilde y_{c_F}$ on $[0, s_*]$, implying that  $\tilde y_{c_F}$ is also a positive solution of \eqref{Fbvp}. Consequently, the BVP \eqref{Fbvp} has a unique positive solution if and only if $c \leq c_F$. However, from \eqref{FNC},  we get that $c \neq \frac{H(s_*)}{s_*}$, which establishes \eqref{c_F2} in the case $h(s_*) \leq \frac{H(s_*)}{s_*}$.
\end{proof}

As noted at the beginning of this section, the range of $c$ for which no positive solution exists is larger than in the monostable case \cite{DJKZ}, although the analysis proceeds symmetrically. The next theorem makes this precise.

\begin{theorem} [Nonexistence for \eqref{Fbvp}] \label{Fnonexthm}
Suppose that
\be \label{Fnu}
 \tilde \nu >0.
\ee
If
\be \label{condition_nonexist}
h(s_*)-(p')^{\frac{1}{p'}} p^{\frac{1}{p} }\tilde \nu^{\frac{1}{p'}} < c < h(s_*),
\ee
then the problem \eqref{Fbvp} admits no positive solution. In particular, if $\tilde  \nu =+\infty$, then  \eqref{Fbvp} admits no positive solution for any $c < h(s_*)$.  
\end{theorem}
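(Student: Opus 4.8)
The plan is to recast nonexistence of a positive solution of \eqref{Fbvp} as a statement about the forward solution $\tilde y_c$ of the initial value problem. By Lemma~\ref{Fpositiveness} we have $\tilde y_c>0$ on $(0,s_*)$, and by Lemma~\ref{Uniqueness2} $\tilde y_c$ is the unique positive solution there; hence \eqref{Fbvp} has a positive solution precisely when $\tilde y_c(s_*)=0$. Thus it suffices to show that for every $c$ in the range \eqref{condition_nonexist} one has $\tilde y_c(s_*)>0$. I argue by contradiction, assuming $\tilde y_c(s_*)=0$, and analyze the profile as $t\to s_*^-$ through the substitution $z(t):=(\tilde y_c(t))^{1/p'}$ already employed in Lemma~\ref{Uniqueness2}. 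This turns the equation into $z'(t)=(c-h(t))+(-f(t))\,z(t)^{-1/(p-1)}$ a.e.\ on a left neighbourhood of $s_*$, with $z>0$ there and $z(s_*)=0$.

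Next I fix the constants dictated by \eqref{Fnu} and the strict inequality \eqref{condition_nonexist}. Writing $a:=h(s_*)-c>0$, the strict bound lets me choose $\nu'<\tilde\nu$ and $\eta>0$ so small that $(a+\eta)^{p'}<p'\,p^{p'/p}\,\nu'$; when $\tilde\nu=+\infty$ this is achievable for every $c<h(s_*)$ by taking $\nu'$ large, which produces the final \emph{in particular} statement. Using the definition of $\tilde\nu$ as a $\liminf$ together with the continuity of $h$ at $s_*$ (assumption \textbf{(H3)}), I shrink the neighbourhood to $(s_*-\delta,s_*)$ on which simultaneously $-f(t)\geq \nu'(s_*-t)^{p'-1}$ and $c-h(t)\geq -(a+\eta)$. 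Inserting these into the $z$-equation yields the single driving inequality $z'(t)\geq -(a+\eta)+\nu'(s_*-t)^{p'-1}z(t)^{-1/(p-1)}$.

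From this inequality I extract two incompatible estimates on the normalized slope $u(t):=z(t)/(s_*-t)$. Dropping the nonnegative last term gives $z'\geq -(a+\eta)$; integrating backward from $s_*$ and using $z(s_*)=0$ gives the upper bound $z(t)\leq (a+\eta)(s_*-t)$, that is $u(t)\leq a+\eta$. On the other hand, rewriting the inequality in terms of $u$ (here $1/(p-1)=p'-1$, so the powers of $s_*-t$ in the reaction term cancel) produces $u'(t)\,(s_*-t)\geq \Phi(u(t))$ with $\Phi(u):=u-(a+\eta)+\nu'\,u^{-1/(p-1)}$. The choice of $\nu',\eta$ is exactly what forces $\Phi$ to have a positive lower bound: since $u^{1/(p-1)}\Phi(u)=\nu'-\big[(a+\eta)-u\big]u^{1/(p-1)}$, positivity of $\Phi$ on all of $(0,\infty)$ is equivalent to $\nu'>\max_{B>0}\big[(a+\eta)-B\big]B^{1/(p-1)}=(a+\eta)^{p'}/(p'p^{p'/p})$, which holds by our choice; as $\Phi\to+\infty$ at both ends of $(0,\infty)$, the number $\rho:=\min_{u>0}\Phi(u)$ is strictly positive. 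Then $u'(t)\geq \rho/(s_*-t)$, and integrating forces $u(t)\to+\infty$ as $t\to s_*^-$, contradicting $u\leq a+\eta$. Hence $\tilde y_c(s_*)>0$ and no positive solution of \eqref{Fbvp} exists, which also respects the necessary condition $c\le h(s_*)$ of Lemma~\ref{lFNC}.

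The main obstacle is the \emph{sharpness} of the threshold in \eqref{condition_nonexist}. Substituting the crude bound $z\leq (a+\eta)(s_*-t)$ directly into the reaction term only excludes $c$ with $\tilde\nu>(a+\eta)^{p'}$, which is far weaker than the stated constant $(p')^{1/p'}p^{1/p}$. Recovering the correct constant requires the optimization $\max_{B}\big[(a+\eta)-B\big]B^{1/(p-1)}$, whose maximizer $B=(a+\eta)/p$ is exactly the slope of the extremal linear barrier; equivalently, the obstruction must be read off the slope dynamics $u'(s_*-t)\geq\Phi(u)$ rather than off a single pointwise estimate. A secondary technical point is the regularity of $z=\tilde y_c^{1/p'}$ at $s_*$: because $\tilde y_c(s_*)=0$, the identity for $z'$ and the backward integration should be carried out on compact subintervals $[t,s']\subset(s_*-\delta,s_*)$ and then passed to the limit $s'\to s_*^-$ using $z(s_*)=0$, exactly as positivity-interval arguments are handled in Lemma~\ref{Uniqueness2} and the surrounding remarks.
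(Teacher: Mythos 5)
Your proposal is correct, but it reaches the conclusion by a genuinely different route than the paper. The paper also argues by contradiction on a left neighbourhood $[t_0,s_*]$ where $-L<c-h(t)<0$, but it then sets up the integral operator $T(u)(t)=p'\int_{s_*}^t[(c-h)(u^+)^{1/p}-f]$, shows $T$ is monotone with the supersolution $\bar y_0(t)=L^{p'}(s_*-t)^{p'}$, and iterates: invoking a greatest-fixed-point theorem, the iterates $\bar y_n$ decrease to a fixed point bounded below by the putative solution $\tilde y_c>0$, while the bound $-f(t)\ge\bar\nu L^{p'}(s_*-t)^{p'-1}$ forces $\bar y_n\le a_nL^{p'}(s_*-t)^{p'}$ with $a_n=a_{n-1}^{1/p}-\bar\nu$; convergence of $a_n$ to a positive limit then contradicts $\bar\nu>\max_{x\in(0,1)}x^{1/p}(1-x)^{1/p'}=1/(p'p^{p'-1})$. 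You instead pass to $z=\tilde y_c^{1/p'}$ and study the normalized slope $u=z/(s_*-t)$, deriving $u'(s_*-t)\ge\Phi(u)$ with $\Phi(u)=u-(a+\eta)+\nu'u^{-1/(p-1)}$, and the hypothesis \eqref{condition_nonexist} is exactly what makes $\rho=\min\Phi>0$, whence $u$ diverges logarithmically against the a priori bound $u\le a+\eta$. The two arguments hinge on the same elementary optimization (your $\max_{B>0}[(a+\eta)-B]B^{1/(p-1)}=(a+\eta)^{p'}/(p'p^{p'-1})$ is the paper's $\max x^{1/p}(1-x)^{1/p'}$ in disguise, since $p'/p=p'-1$), so both recover the sharp constant; your version is more self-contained (no appeal to the fixed-point theorem of \cite{DM}) and makes the mechanism of blow-up of the slope transparent, while the paper's iteration stays in the original variable $y$ and avoids the change of variables near the degenerate endpoint. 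Your handling of the technical points (integrating on compact subintervals and letting the endpoint tend to $s_*$, and the reduction of nonexistence for \eqref{Fbvp} to $\tilde y_c(s_*)>0$ via Lemmas \ref{Fpositiveness} and \ref{Uniqueness2}) is consistent with how the paper treats the analogous issues.
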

\begin{proof} We argue by contradiction. Fix any $c \in \mathbb{R}$ that satisfies \eqref{condition_nonexist}. Assume that the BVP \eqref{Fbvp} has a positive solution $\tilde y_c=\tilde y_c(t)$ in $(0, s_*)$.  Since $ -(p')^{\frac{1}{p'}}p^{\frac{1}{p}}\tilde \nu^{\frac{1}{p'}}< c -h(s_*)< 0$,  there exists a constant $L>0$ such that 
\be
-(p')^{\frac{1}{p'}}p^{\frac{1}{p}}\tilde \nu^{\frac{1}{p'}} < -L<c -h(s_*)<0. 
\notag
\ee
By the continuity of $h$ at $s_*$, there exists $t_0 \in [0,s_*)$ such that  
\be \label{range of c-h(t)}
-(p')^{\frac{1}{p'}}p^{\frac{1}{p}}\tilde \nu^{\frac{1}{p'}} < -L<c -h(t)<0 \quad \text{for all  $t \in [t_0, s_*]$}. 
\ee
Since $\tilde y_c$ is also a solution of \eqref{y2} on $[0, s_*]$, it follows from the uniqueness result that the restricted function $\tilde y_c:=\tilde y_c|_{[t_0, s_*]}$ is a unique solution of the following IVP
\be
\begin{cases}
 y'(t) = p' \left[ (c-h(t)) ( y^{+} (t))^{\frac{1}{p}} -f(t)  \right] \quad \text{a.e. in $(t_0, s_*)$},\\
 y(t_0)=\tilde y_c(t_0).
\end{cases}
\notag
\ee
We now define an operator $T: C[t_0, s_*] \rightarrow C[t_0, s_*]$ as
\be
T(u)(t):=p'\int_{s_*}^t\Big[(c-h(\tau))(u^+(\tau))^\frac{1}{p}-f(\tau)\Big] d\tau, \quad u \in C[t_0, s_*].
\notag
\ee
Then $\tilde y_c$ is definitely a fixed point of the operator $T$ in $C[t_0, s_*]$. Owing to the fact that $c-h(t) < 0$ on $(t_0, s_*]$,  we conclude that for any $u_1\leq u_2 $ on $[t_0, s_*]$,
\be
T(u_1)(t)-T(u_2)(t)=p'\int_{s_*}^{t}\Big[(c-h(\tau))\Big((u_1^+(\tau))^\frac{1}{p}-(u_2^+(\tau))^\frac{1}{p}\Big) \Big] d\tau \leq 0,
\notag
\ee
that is, $T$ is monotone increasing on $C[t_0, s_*]$. Set
\be
\bar y_0(t):=L^{p'}(s_*-t)^{p'}, \quad t \in [t_0, s_*]. 
\notag
\ee
Due to \eqref{range of c-h(t)}, we obtain that for all $t \in [t_0, s_*]$, 
\be
\begin{split}
T(\bar y_0)(t)
& = p'\int_{s_*}^{t}\Big[(c-h(\tau))  L^{\frac{p'}{p}}(s_*-\tau)^{\frac{p'}{p}} \Big] d\tau-p'\int_{s_*}^t f(\tau) d\tau \\
& \leq p'\int_{t}^{s_*}\Big[-(c-h(\tau))  L^{\frac{p'}{p}}(s_*-\tau)^{\frac{p'}{p}} \Big] d\tau \\
& \leq p'L^{1+\frac{p'}{p}}\int_{t}^{s_*}   (s_*-\tau)^{\frac{p'}{p}} d\tau   \\
& = \bar y_0(t). 
\end{split}
\notag
\ee
Then, by the monotonicity of $T$ on $C[t_0, s_*]$, a sequence defined as
\be \label{seq_y_n}
\bar y_{n+1}=T(\bar y_n), \quad n=0, 1, 2, \cdots,
\ee
satisfies that
\be
\bar y_0(t) \geq \bar y_1(t) \geq \cdots \geq \bar y_n(t) \geq \cdots,  \quad t \in (t_0, s_*),
\notag
\ee
and moreover it is bounded from below; namely
\be
\bar y_{n}=T(\bar y_{n-1}) \geq -p'\int_{s_*}^t f(\tau) d\tau, \quad n \in \mathbb{N}.
\notag
\ee
According to \cite[Theorem 6.3.16]{DM}, the sequence \eqref{seq_y_n} converges to the greatest fixed point of $T$. From the fact that $\tilde y_c$ is a fixed point of $T$, we deduce that
\be \label{mtildey}
\bar y_0(t) \geq \bar y_1(t) \geq \cdots \geq \bar y_n(t) \geq \cdots \geq \tilde y_c(t) >0, \quad t \in (t_0, s_*).
\ee

\smallskip

In order to obtain a contradiction we notice from \eqref{Fnu} and \eqref{range of c-h(t)} that there exists $\delta \in [t_0, s_*)$ and $\bar \nu \in \big(\frac{1}{p'p^{p'-1}}, 1\Big)$ such that
\be \label{ineq_f_nu}
-f(t) \geq \bar \nu L^{p'}(s_*-t)^{p'-1}, \quad t \in (\delta, s_*).
\ee
Recalling \eqref{seq_y_n} and using \eqref{ineq_f_nu}, a direct computation gives that
\be
\bar y_1(t) \leq L^{p'}(s_*-t)^{p'}(1-\bar \nu),  \quad t \in (\delta, s_*)
\notag
\ee
and
\be
\bar y_2(t) \leq L^{p'}(s_*-t)^{p'}\Big[(1-\bar \nu)^{\frac{1}{p}}-\bar \nu \Big], \quad t \in (\delta, s_*).
\notag
\ee
In general, for each $n \in \mathbb{N}$, we obtain
\be \label{mtildey2}
\bar y_n(t) \leq a_n L^{p'}(s_*-t)^{p'}, \quad  \quad t \in (\delta, s_*),
\ee
where $a_n$ is a sequence defined as
\be \label{seq_a_n}
a_0=1, \quad a_n=(a_{n-1})^{\frac{1}{p}}-\bar \nu.
\ee
It follows from \eqref{mtildey} and \eqref{mtildey2} that the sequence $\{ a_n \}$ is decreasing and bounded below by $\tilde y_c$, and hence it converges to $a_{\infty} \in (0,1)$ that satisfies $\bar \nu=a_{\infty}^{\frac{1}{p}}-a_{\infty}=a_{\infty}^{\frac{1}{p}}(1-a_{\infty})^{\frac{1}{p'}}$. A simple calculation shows that
\be
\bar \nu \leq \max_{x \in (0,1)} x^{\frac{1}{p}}(1-x)^{\frac{1}{p'}}=\frac{1}{p'p^{p'-1}},
\notag
\ee
a contradiction to the fact that $\bar \nu \in \big(\frac{1}{p'p^{p'-1}}, 1\big)$. Therefore, \eqref{Fbvp} cannot have a positive solution. In particular, if $\tilde{\nu} = + \infty$, \eqref{condition_nonexist} yields that \eqref{Fbvp}  has no positive solution for any $c < h(s_*)$.  
\end{proof}

\medskip

We now state the existence and nonexistence results for the BVP \eqref{Bbvp} on $[s_*, 1]$, which are fully symmetric to Theorem \ref{Fexthm} and Theorem \ref{Fnonexthm}.

\begin{theorem} [Existence for \eqref{Bbvp}] \label{exthm}
Suppose that
\be \label{mu}
0< \hat \mu  < + \infty.
\ee
Then, if $h(s_*)> \frac{H(1)-H(s_*)}{1-s_*}$, then there exists a number
\be\label{c_B1}
c_B \in [ h(s_*), ~ h_M +(p')^{\frac{1}{p'}} p^{\frac{1}{p} }\hat \mu^{\frac{1}{p'}}]
\ee
such that the problem \eqref{Bbvp} admits a unique positive solution if and only if $c\geq c_B$. On the other hand, if $h(s_*)\leq \frac{H(1)-H(s_*)}{1-s_*}$, then the same result holds with 
\be \label{c_B2}
c_B \in  ( \frac{H(1)-H(s_*)}{1-s_*} , ~ h_M +(p')^{\frac{1}{p'}} p^{\frac{1}{p} }\hat \mu^{\frac{1}{p'}} ].
\ee 
\end{theorem}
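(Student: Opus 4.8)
The plan is to mirror the proof of Theorem~\ref{Fexthm} under the reflection that exchanges the roles of the left and right endpoints, using that $c\mapsto\hat y_c$ is strictly \emph{decreasing} (Lemma~\ref{mono}) rather than increasing. Throughout, recall that by Lemma~\ref{Fpositiveness} the terminal-value solution $\hat y_c$ is automatically positive on $(s_*,1)$, so \eqref{Bbvp} has a positive solution precisely when $\hat y_c(s_*)=0$; uniqueness is then inherited for free from Lemma~\ref{Uniqueness2} and Remark~\ref{uniqueness of ps}.

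First I would show that $\hat y_c(s_*)=0$ whenever $c\ge h_M+(p')^{1/p'}p^{1/p}\hat\mu^{1/p'}$, by constructing an explicit barrier from above. For $c>h_M$ set $\psi_c(s):=(c-h_M)(s-s_*)^{1/p}-(s-s_*)$, whose maximum over $\big(s_*,\,s_*+(c-h_M)^{p'}\big)$ equals $\left(\frac{c-h_M}{p}\right)^{p'}(p-1)$ and is attained at $k:=s_*+\left(\frac{c-h_M}{p}\right)^{p'}$; the threshold $c\ge h_M+(p')^{1/p'}p^{1/p}\hat\mu^{1/p'}$ is exactly the condition that this maximum dominates $\hat\mu$, giving $(c-h_M)(k-s_*)^{1/p}-\hat\mu\ge k-s_*$. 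Taking $\varphi(t):=(k-s_*)(t-s_*)^{p'}$ on $[s_*,1]$ and using $p'/p=p'-1$ together with $f(t)\le\hat\mu(t-s_*)^{p'-1}$ (from $\hat\mu<\infty$) and $c-h(t)\ge c-h_M\ge0$, a direct computation gives $\varphi'(t)-\Gamma(t,\varphi(t),c)\le0$, so $\varphi$ is a \emph{subsolution}. Since $\varphi(1)>0=\hat y_c(1)$, Lemma~\ref{Flecom}(ii) yields $\hat y_c(t)\le\varphi(t)$ on $[s_*,1]$; as $\varphi(s_*)=0$ and $\hat y_c\ge0$, we conclude $\hat y_c(s_*)=0$. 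Note the direction flip relative to Theorem~\ref{Fexthm}: there $\varphi$ was a supersolution bounding $\tilde y_c$ in the forward comparison, whereas here $\varphi$ is a subsolution bounding $\hat y_c$ in the backward comparison.

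Next I would set $c_B:=\inf\{c:\eqref{Bbvp}\text{ has a positive solution}\}$, which is finite by the previous step and bounded below by Lemma~\ref{lBNC}. Strict monotonicity (Lemma~\ref{mono}, Remark~\ref{mono_positive}) makes the admissible set upward closed: if $\hat y_{c_0}(s_*)=0$ and $c>c_0$, then $\hat y_c(t)<\hat y_{c_0}(t)$ on $(s_*,1)$, and letting $t\to s_*^+$ forces $\hat y_c(s_*)\le0$, hence $\hat y_c(s_*)=0$ by positivity. To see that $c_B$ itself is admissible, take $c_n\downarrow c_B$ with $\hat y_{c_n}(s_*)=0$; were $\hat y_{c_B}(s_*)>0$, all nearby solutions would remain positive on $[s_*,1]$, so continuous dependence (Lemma~\ref{Cdop}) would give $\hat y_{c_n}(s_*)\to\hat y_{c_B}(s_*)>0$, contradicting $\hat y_{c_n}(s_*)=0$. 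Thus the admissible set is exactly $[c_B,\infty)$, which is the asserted ``if and only if.''

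Finally, the placement of $c_B$ follows by combining the upper bound $c_B\le h_M+(p')^{1/p'}p^{1/p}\hat\mu^{1/p'}$ from the barrier step with Lemma~\ref{lBNC}: every admissible $c$ satisfies $c\ge h(s_*)$ and $c>\frac{H(1)-H(s_*)}{1-s_*}$, and since $c_B$ is itself admissible we obtain $c_B\ge\max\!\big\{h(s_*),\frac{H(1)-H(s_*)}{1-s_*}\big\}$ together with the strictness of the second inequality; this gives \eqref{c_B1} when $h(s_*)>\frac{H(1)-H(s_*)}{1-s_*}$ and \eqref{c_B2} when $h(s_*)\le\frac{H(1)-H(s_*)}{1-s_*}$. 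The main obstacle I anticipate is bookkeeping the reversed comparison direction in the barrier step---verifying that $\varphi$ is a sub- rather than a supersolution and that Lemma~\ref{Flecom}(ii) is applied with the correct endpoint inequality at $t=1$---and handling the continuous-dependence argument up to the degenerate endpoint $s_*$, where Lemma~\ref{Cdop} only controls convergence on subintervals of strict positivity.
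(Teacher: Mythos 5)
Your proposal is correct and is essentially the paper's own argument: the paper proves Theorem~\ref{exthm} only implicitly, by declaring it ``fully symmetric'' to Theorem~\ref{Fexthm}, and your barrier $\varphi(t)=(k-s_*)(t-s_*)^{p'}$, the upward-closedness via Lemma~\ref{mono}, the limiting argument at $c_B$, and the localization of $c_B$ via Lemma~\ref{lBNC} are exactly the mirrored steps of that proof. The endpoint subtlety you flag in applying Lemma~\ref{Cdop} up to $s_*$ (where the solutions vanish) is present in the paper's proof of Theorem~\ref{Fexthm} as well, so it is not a deviation from the paper's level of rigor.
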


\smallskip

\begin{theorem}[Nonexistence for \eqref{Bbvp}] \label{Nonexistence} 
Suppose that
\be \label{nu}
 \hat \nu >0.
\ee
If
\be \label{condition_nonexist2}
h(s_*) < c < h(s_*)+(p')^{\frac{1}{p'}}p^{\frac{1}{p}}\hat \nu^{\frac{1}{p'}},
\ee
then the problem \eqref{Bbvp} admits no positive solution. In particular, if $\hat \nu =+\infty$, then \eqref{Bbvp} admits no positive solution for any $c > h(s_*)$.
\end{theorem}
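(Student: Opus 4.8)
The plan is to prove Theorem~\ref{Nonexistence} by adapting, through the reflection about $s_*$, the contradiction argument of Theorem~\ref{Fnonexthm}: on $(s_*,1)$ the positivity of $f$ plays the role that $-f>0$ played on $(0,s_*)$, the vanishing datum now sits at the \emph{left} endpoint $s_*$ rather than the right, and the governing sign near $s_*$ is $c-h>0$ instead of $c-h<0$. So I expect the entire structure (contradiction hypothesis, monotone integral operator, supersolution, monotone iteration, and the terminal recursion bounded by $\tfrac{1}{p'p^{p'-1}}$) to transfer, with orientation and signs reversed.

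First I would argue by contradiction. Fix $c$ with $h(s_*)<c<h(s_*)+(p')^{1/p'}p^{1/p}\hat\nu^{1/p'}$ and suppose \eqref{Bbvp} has a positive solution $\hat y_c$ on $(s_*,1)$. Since $0<c-h(s_*)<(p')^{1/p'}p^{1/p}\hat\nu^{1/p'}$, I choose $L>0$ with $0<c-h(s_*)<L<(p')^{1/p'}p^{1/p}\hat\nu^{1/p'}$; by continuity of $h$ at $s_*$ (assumption \textbf{(H3)}) there is $t_0\in(s_*,1]$ such that $0<c-h(t)<L$ for all $t\in[s_*,t_0]$. By Lemma~\ref{Uniqueness2}, the restriction $\hat y_c|_{[s_*,t_0]}$ is the unique positive solution of the problem with datum $\hat y_c(s_*)=0$, hence a fixed point of
\[
T(u)(t):=p'\int_{s_*}^t\Big[(c-h(\tau))(u^+(\tau))^{1/p}-f(\tau)\Big]\,d\tau,\qquad u\in C[s_*,t_0].
\]
Because $c-h>0$ on $(s_*,t_0]$ and the integration runs forward from $s_*$, the operator $T$ is monotone increasing.

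Next I would run the monotone iteration. With the supersolution $\bar y_0(t):=L^{p'}(t-s_*)^{p'}$, the bounds $c-h<L$ and $f>0$, together with the identity $1+p'/p=p'$ and $\int_{s_*}^t(\tau-s_*)^{p'-1}\,d\tau=(t-s_*)^{p'}/p'$, give $T(\bar y_0)\le\bar y_0$. Hence $\bar y_{n+1}:=T(\bar y_n)$ is nonincreasing and bounded below, so by \cite[Theorem~6.3.16]{DM} it converges to the greatest fixed point of $T$, which dominates $\hat y_c$; thus $\bar y_n\ge\hat y_c>0$ on $(s_*,t_0)$. From $\hat\nu>0$ and $L<(p')^{1/p'}p^{1/p}\hat\nu^{1/p'}$ (equivalently $L^{p'}/(p'p^{p'-1})<\hat\nu$) I fix $\bar\nu\in(1/(p'p^{p'-1}),1)$ with $\bar\nu L^{p'}<\hat\nu$, so that $f(t)\ge\bar\nu L^{p'}(t-s_*)^{p'-1}$ on some right-neighborhood $(s_*,\delta)$. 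Induction then yields $\bar y_n(t)\le a_n L^{p'}(t-s_*)^{p'}$ on $(s_*,\delta)$, where $a_0=1$ and $a_n=a_{n-1}^{1/p}-\bar\nu$.

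Finally, since $\bar y_n\ge\hat y_c>0$ pointwise, $\{a_n\}$ is nonincreasing and bounded below by a positive quantity, hence converges to some $a_\infty\in(0,1)$ with $\bar\nu=a_\infty^{1/p}-a_\infty$. This forces $\bar\nu\le\max_{a\in(0,1)}(a^{1/p}-a)=\tfrac{1}{p'p^{p'-1}}$, contradicting $\bar\nu>\tfrac{1}{p'p^{p'-1}}$; the case $\hat\nu=+\infty$ follows by taking $L$ arbitrarily large. I expect the main obstacle to be purely bookkeeping: tracking the reversed orientation and the opposite sign $c-h>0$ so that $T$ remains monotone increasing and the supersolution inequality $T(\bar y_0)\le\bar y_0$ still points the right way. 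Once these are fixed, the quantitative heart of the argument—the recursion $a_n=a_{n-1}^{1/p}-\bar\nu$ and its threshold $\tfrac{1}{p'p^{p'-1}}$—carries over verbatim from Theorem~\ref{Fnonexthm}.
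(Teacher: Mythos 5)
Your proposal is correct and is exactly the argument the paper intends: Theorem~\ref{Nonexistence} is stated without proof as ``fully symmetric'' to Theorem~\ref{Fnonexthm}, and your reflected version (sign $c-h>0$ near $s_*$, base point at the left endpoint $s_*$, supersolution $L^{p'}(t-s_*)^{p'}$, monotone iteration via \cite[Theorem~6.3.16]{DM}, and the recursion $a_n=a_{n-1}^{1/p}-\bar\nu$ against the threshold $\tfrac{1}{p'p^{p'-1}}$) carries out that symmetry faithfully. All the sign and orientation checks (monotonicity of $T$, $T(\bar y_0)\le\bar y_0$, positivity of the limit $a_\infty$) go through as you describe.
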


\begin{remark} \label{Monononex} We notice that even in the monostable case treated in \cite[Theorem 4.2]{DJKZ}, the nonexistence range of $c$ can actually be strengthened to
\be
h(0)<c<h(0)+(p')^{\frac{1}{p'}}p^{\frac{1}{p}}\nu^{\frac{1}{p'}}. 
\notag
\ee 
Furthermore, in all subsequent remarks and theorems following \cite[Theorem 4.2]{DJKZ}, the term $h_m+(p')^{\frac{1}{p'}}p^{\frac{1}{p}}\nu^{\frac{1}{p'}}$ can be replaced by $h(0)+(p')^{\frac{1}{p'}}p^{\frac{1}{p}}\nu^{\frac{1}{p'}}$. In particular, the second plot in \cite[Fig.2]{DJKZ} should therefore be disregarded (a revised version of \cite{DJKZ} is available on arXiv:2601.12869). 
\end{remark}

\smallskip

\begin{remark} \label{remarkbvp}
According to the preceding four theorems, under the assumptions \eqref{Fmu} and \eqref{mu}, both $\tilde \nu$ and $\hat \nu$ are also finite, and thus threshold wave speeds $c_B$ and $c_F$ are defined as finite values. Moreover, they satisfy the estimates:  
\be \label{rcF}
h_m-(p')^{\frac{1}{p'}} p^{\frac{1}{p}} \tilde \mu^{\frac{1}{p'}} \leq c_F \leq \min\{h(s_*)-(p')^{\frac{1}{p'}} p^{\frac{1}{p}} \tilde \nu^{\frac{1}{p'}} , \frac{H(s_*)}{s_*}\} \leq  h(s_*)-(p')^{\frac{1}{p'}} p^{\frac{1}{p}} \tilde \nu^{\frac{1}{p'}} ,
\ee
and 
\be \label{rcB}
h(s_*)+(p')^{\frac{1}{p'}} p^{\frac{1}{p}} \hat \nu^{\frac{1}{p'}}  \leq \max\{h(s_*)+(p')^{\frac{1}{p'}} p^{\frac{1}{p}} \hat \nu^{\frac{1}{p'}} , \frac{H(1)-H(s_*)}{1-s_*} \}  \leq  c_B \leq h_M+(p')^{\frac{1}{p'}} p^{\frac{1}{p}}\hat \mu^{\frac{1}{p'}}. 
\ee
Thus, whenever \eqref{Fmu} and \eqref{mu} hold, we have 
\be
-\infty<c_F \leq c_B <\infty.  
\notag
\ee
Furthermore, if $c\geq c_B$ or $c\leq c_F$, then the BVP \eqref{ode} has no positive solution, since in these cases necessarily $\tilde y_c(s_*)=0$ or $\hat y_c(s_*)=0$. Consequently, a necessary condition for \eqref{ode} to possess a positive solution is  
\be \label{neode}
c_F<c_B,
\ee
which implies that any admissible wave speed $c$ must lie strictly between $c_F$ and $c_B$. 
\end{remark}

\begin{remark}\label{extended cFcB}
For clarity in what follows, we regard $c_F$ and $c_B$ as elements of the extended real line.
Without the assumptions \eqref{Fmu} and \eqref{mu}, the finiteness of $c_F$ and $c_B$ is not guaranteed; in particular, one may have $c_F = -\infty$ or $c_B = +\infty$.
\end{remark}

\smallskip

\section{Existence and nonexistence for the first-order BVP on $[0,1]$} \label{results of the first order ode}

Building on the results of Sections \ref{Forward and Backward Initial Value Problems} and \ref{BVP on subintervals}, we now investigate the existence and nonexistence of positive solutions to the BVP \eqref{ode} on the entire interval $[0,1]$. 

Before proceeding with the existence analysis, we first establish that the strict ordering \eqref{neode} is indeed the necessary and sufficient condition for the existence of a unique positive solution to \eqref{ode}. That is, a unique positive solution to the BVP \eqref{ode} exists only when the wave speeds admitting positive solutions to the BVP \eqref{Fbvp} on $[0, s_*]$ are strictly smaller than those admitting positive solutions to the BVP \eqref{Bbvp} on $[s_*, 1]$. The next proposition makes this approach more  precise; 
its proof follows the strategy of \cite[Theorem~12]{MMM04}, adapted to our setting. For this purpose we introduce the following notation. Define the strip
\be
\mathcal S:=\{(t, v) : 0 \leq t \leq 1, ~ v \geq 0\}. 
\notag
\ee
For each $c \in (c_F, c_B)$, let $(0, \tilde s_c) \subseteq (0,1)$ denote the maximal interval on which the forward trajectory $t \mapsto (t, \tilde y_c(t))$ remains in $\mathcal S$, and let $(\hat s_c, 1) \subseteq (0,1)$ denote the maximal interval on which the backward trajectory $t \mapsto (t, \hat y_c(t))$ remains in $\mathcal S$. By Lemma \ref{Fpositiveness}, 
\be
s_*< \tilde s_c \leq 1, \quad 0 \leq \hat s_c <s_*. 
\notag
\ee
We then set 
\be
\mathcal A_0:=\{ c \in (c_F, c_B) : \tilde s_c <1 \}, \quad \mathcal A_1:=\{ c \in (c_F, c_B) : \hat s_c >0 \}. 
\notag
\ee
That is, $c \in \mathcal A_0$ means the forward trajectory leaves $\mathcal S$ before reaching $t=1$, whereas $c \in \mathcal A_1$ means the backward trajectory leaves $\mathcal S$ before reaching $t=0$.

We collect basic properties of $\mathcal A_0$ and $\mathcal A_1$:
\begin{itemize}
\item The forward and backward trajectories vanish at the respective exit points:
for every $c \in \mathcal A_0$, $\tilde y_c(\tilde s_c)=0$;
for every $c \in \mathcal A_1$, $\hat y_c(\hat s_c)=0$.

\smallskip

\item  The trajectories of $\tilde y_c$ and $\hat y_c$ do not intersect each other in the interior of the strip $\mathcal S$, i.e.,
\be
\mathcal A_0 \cap \mathcal A_1 = \emptyset.
\notag
\ee
This follows from the uniqueness of positive solutions on their respective intervals (see Remark~\ref{uniqueness of ps}).

\smallskip

\item There exists at most one value $c^* \in (c_F, c_B)$ such that $c^* \notin \mathcal A_0 \cup \mathcal A_1$.
Indeed, if $c \notin \mathcal A_0 \cup \mathcal A_1$, then $\tilde s_c = 1$ and $\hat s_c = 0$, and hence $\tilde y_c \equiv \hat y_c$ on $[0,1]$ by uniqueness.
By the monotonicity of $\tilde y_c$ and $\hat y_c$ in $c$ (Remark~\ref{mono_positive}), such coincidence can occur for at most one value of $c$.

\smallskip

\item Elements in $\mathcal A_0$ lie to the left of those in $\mathcal A_1$; equivalently,
if $c_0 \in \mathcal A_0$ and $c_1 \in \mathcal A_1$, then $c_0 < c_1$. This is obvious from the monotonicity of $\tilde y_c$ and $\hat y_c$ on $c$ (Remark~\ref{mono_positive}).

\smallskip

\item By the uniqueness observed in Remark~\ref{uniqueness of ps}, whether $c$ belongs to $\mathcal A_0$ or $\mathcal A_1$ determines the endpoint behavior of the trajectories.
If $c \in \mathcal A_0$, then the backward solution satisfies $\hat s_c = 0$ and $\hat y_c(0) > 0$;
if $c \in \mathcal A_1$, then the forward solution satisfies $\tilde s_c = 1$ and $\tilde y_c(1) > 0$.
Conversely, these endpoint conditions also characterize the sets $\mathcal A_0$ and $\mathcal A_1$:
if $\hat s_c = 0$ with $\hat y_c(0) > 0$ for some $c$, then $c \in \mathcal A_0$;
if $\tilde s_c = 1$ with $\tilde y_c(1) > 0$ for some $c$, then $c \in \mathcal A_1$.
Hence, we may equivalently write
\be
\mathcal A_0 = \{ c \in (c_F, c_B) : \hat s_c = 0,~ \hat y_c(0) > 0 \}, \qquad
\mathcal A_1 = \{ c \in (c_F, c_B) : \tilde s_c = 1,~ \tilde y_c(1) > 0 \}.
\notag
\ee

\end{itemize}

With these observations, we now establish the following proposition, which constitutes a key step toward the proof of our main results. 

\begin{proposition} \label{existenceprop} Let $f$ and $h$ satisfy \eqref{pf}, \eqref{semicont}, and \textbf{(H3)}. Then the BVP \eqref{ode} admits a unique positive solution $y=y(t)$ on $(0,1)$ if and only if $c_F < c_B$. 
\end{proposition}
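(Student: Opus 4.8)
The statement is an if-and-only-if characterizing solvability of the full BVP \eqref{ode} in terms of the strict ordering $c_F < c_B$ of the two threshold speeds. The forward direction (necessity of $c_F < c_B$) is already recorded as the necessary condition \eqref{neode} in Remark~\ref{remarkbvp}: if $c \geq c_B$ then $\hat y_c(s_*) = 0$, and if $c \leq c_F$ then $\tilde y_c(s_*) = 0$, either of which forces any candidate solution of \eqref{ode} to vanish at the interior point $s_*$, precluding positivity on $(0,1)$. So the substance is the converse: assuming $c_F < c_B$, I want to produce a (unique) wave speed $c^* \in (c_F, c_B)$ for which the forward trajectory $\tilde y_{c^*}$ and the backward trajectory $\hat y_{c^*}$ glue together into a single positive solution of \eqref{ode} on all of $(0,1)$.

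\textbf{Constructing $c^*$.} The heart of the argument is to show that the single undetermined value $c^*$ — the unique $c\in(c_F,c_B)$ with $c \notin \mathcal A_0 \cup \mathcal A_1$, whose existence-at-most-one was already noted in the bulleted observations — in fact exists. The strategy is a connectedness/separation argument on the interval $(c_F, c_B)$. First I would establish that $\mathcal A_0$ and $\mathcal A_1$ are \emph{open} subsets of $(c_F, c_B)$: this follows from the continuous dependence of $\tilde y_c$ and $\hat y_c$ on $c$ (Lemma~\ref{Cdop}) together with the strict monotonicity in $c$ (Lemma~\ref{mono}, Remark~\ref{mono_positive}), since the exit conditions $\tilde s_c < 1$ (equivalently $\tilde y_c(1) \le 0$ as a limit, with $\hat y_c(0)>0$) and $\hat s_c > 0$ are stable under small perturbations of $c$ once one trajectory strictly overshoots or undershoots the other at an endpoint. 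Next I would verify that both sets are nonempty for $c$ near the endpoints of $(c_F, c_B)$: by the definition of $c_F$ and the bounds \eqref{rcF}, for $c$ slightly above $c_F$ the forward solution reaches $t=1$ with $\tilde y_c(1) > 0$ (since $\tilde y_c(s_*) > 0$ once $c > c_F$ and the backward problem \eqref{Bbvp} is not yet solvable), placing such $c$ in $\mathcal A_1$; symmetrically, $c$ slightly below $c_B$ lies in $\mathcal A_0$. Since $\mathcal A_0, \mathcal A_1$ are disjoint open sets that are each nonempty and whose union cannot be all of the connected interval $(c_F, c_B)$ (otherwise $(c_F,c_B) = \mathcal A_0 \sqcup \mathcal A_1$ would be a disconnection), there must exist at least one $c^* \in (c_F, c_B) \setminus (\mathcal A_0 \cup \mathcal A_1)$; the at-most-one observation then pins it down uniquely.

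\textbf{From $c^*$ to the solution.} For this $c^*$ we have $\tilde s_{c^*} = 1$ and $\hat s_{c^*} = 0$, so both the forward solution $\tilde y_{c^*}$ and the backward solution $\hat y_{c^*}$ remain in the interior of $\mathcal S$ across the whole of $(0,1)$. By the uniqueness on the full positivity interval (Remark~\ref{uniqueness of ps}), these two solutions must coincide, $\tilde y_{c^*} \equiv \hat y_{c^*}$ on $[0,1]$; call the common function $y$. Then $y$ is positive on $(0,1)$, satisfies the ODE a.e., and inherits $y(0) = 0$ from $\tilde y_{c^*}$ and $y(1) = 0$ from $\hat y_{c^*}$, hence solves the BVP \eqref{ode}. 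Uniqueness of the positive solution (and of the speed) follows because any positive solution of \eqref{ode} restricts to a forward solution with $\tilde s_c = 1$ and a backward solution with $\hat s_c = 0$, forcing $c \notin \mathcal A_0 \cup \mathcal A_1$, and we have shown this holds for exactly one value.

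\textbf{Main obstacle.} The delicate point is the openness of $\mathcal A_0$ and $\mathcal A_1$, together with the nonemptiness near the endpoints. Continuous dependence (Lemma~\ref{Cdop}) only guarantees uniform convergence on closed subintervals where all solutions stay \emph{strictly} positive, so near an exit point where $\tilde y_c$ touches $0$ one must argue carefully — using strict monotonicity in $c$ to get a definite sign of the trajectory at the endpoint rather than a mere limit — to upgrade convergence to the stability of the exit condition. I expect the bulk of the careful work to lie in making the openness and boundary-behavior claims rigorous, precisely at the transversality of the trajectories to the boundary $v=0$ of $\mathcal S$; the topological conclusion and the gluing are then routine.
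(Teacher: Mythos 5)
Your overall architecture (necessity from Remark~\ref{remarkbvp}; for sufficiency, show the disjoint sets $\mathcal A_0,\mathcal A_1$ cannot cover $(c_F,c_B)$, extract the exceptional $c^*$, and glue $\tilde y_{c^*}\equiv\hat y_{c^*}$) matches the paper, and the gluing step at the end is fine. But the core of the sufficiency argument --- nonemptiness of $\mathcal A_0$ and $\mathcal A_1$ --- is where your proposal breaks down, in two ways. First, your orientation is reversed: you place $c$ slightly above $c_F$ in $\mathcal A_1$ and $c$ slightly below $c_B$ in $\mathcal A_0$, which directly contradicts the ordering property that every element of $\mathcal A_0$ lies to the \emph{left} of every element of $\mathcal A_1$ (small $c$ makes $\tilde y_c$ small, hence prone to exiting the strip early, so small $c$ should land in $\mathcal A_0$). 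Second, the justification is a non sequitur: for $c>c_F$ you know $\tilde y_c(s_*)>0$, and for $c<c_B$ you know the \emph{backward} solution satisfies $\hat y_c(s_*)>0$, but neither fact tells you whether the \emph{forward} trajectory $\tilde y_c$ survives to $t=1$. Establishing which set $c$ falls into near the endpoints is precisely the hard content of the paper's proof (its Case I uses a contradiction argument: if $\mathcal A_0=\emptyset$ then $\mathcal A_1=(c_F,c_B)$, and a monotone limit $c_n\downarrow c_F$ combined with the lower barrier $\hat y_{c_1}$ forces $0=\tilde y_{c_F}(s_*)\geq \hat y_{c_1}(s_*)>0$); it cannot be read off ``from the definition of $c_F$.''

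A further gap: by Remark~\ref{extended cFcB}, $c_F=-\infty$ and/or $c_B=+\infty$ are admissible, and in those cases there is no finite endpoint to perturb from, so your ``slightly above $c_F$'' argument has no meaning. The paper devotes three additional cases to this, replacing the endpoint perturbation by direct integral estimates (integrating \eqref{y2} up to $\tilde s_c$ and using \eqref{pf} to show $\tilde s_c<1$ for all $c<h_m$, and a growth estimate in $c$ to show $\tilde y_c(1)>0$ for $c$ large). Your openness-plus-connectedness framing is a legitimate alternative to the paper's $\sup\mathcal A_0=\inf\mathcal A_1$ construction and would avoid some of the limiting arguments at $c^*$, but it does not remove the need for these nonemptiness proofs, and as you yourself note, the openness of $\mathcal A_0,\mathcal A_1$ is delicate because Lemma~\ref{Cdop} gives continuous dependence only on intervals of strict positivity; the paper sidesteps this entirely by never invoking openness.
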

\begin{proof} If $c \leq c_F$ or $c \geq c_B$, then by \eqref{rcF}--\eqref{rcB} we have $\tilde y_c(s_*)=0$ or $\hat y_c(s_*)=0$, and hence \eqref{ode} admits no positive solution on $(0,1)$. Conversely, assume $c_F < c_B$. We first prove that both $\mathcal A_0$ and $\mathcal A_1$ are nonempty, considering separately the four cases determined  by the finiteness of $c_F$ and $c_B$ (see Remark~\ref{extended cFcB}).

\smallskip

\noindent Case I: $-\infty < c_F < c_B < \infty$. Assume, by contradiction, $\mathcal A_0 = \emptyset$. Then, by the monotonicity of $\tilde y_c$ in $c$, we must have $\mathcal A_1=(c_F, c_B)$; that is, for any $c \in (c_F, c_B)$, $\tilde s_c=1$ and $\tilde y_c(1)>0$. Choose a sequence $\{c_n\} \subset \mathcal A_1$ with $c_n \downarrow c_F$. Then by continuous dependence (Lemma \ref{Cdop}) together with the monotonicity of  $\tilde y_{c}$ in $c$,  
\be \label{convergence}
\tilde y_{c_n}(t) \downarrow \tilde y_{c_F}(t) \quad \text{on $[0, s_*]$}.
\ee
Define a function
\be
\tilde y(t)=
\begin{cases}
\tilde y_{c_F}(t), \quad & t \in [0, s_*), \\
\hat y_{c_1}(t), \quad & t \in [s_*, 1]. 
\end{cases}
\notag
\ee 
Since $\tilde y_{c_n}(t) > \tilde y(t)$ on $(0, 1]$ for all $n \in \mathbb{N}$,  the convergence \eqref{convergence} yields that $$0=\tilde y_{c_F}(s_*) \geq \tilde y(s_*)>0,$$ a contradiction. In a similar manner,  $\mathcal A_1 \neq \emptyset$.

\smallskip

\noindent Case II: $-\infty=c_F < c_B< \infty$. By the same argument as in Case I, $\mathcal A_1 \neq \emptyset$. To prove  that $\mathcal A_0 \neq \emptyset$, it suffices to show that $\tilde s_c <1$ for sufficiently small $c$. Fix $c<h_m$. Integrating \eqref{y2} from $0$ to $\tilde s_c$ gives
\be
\begin{split}
0 \leq \tilde y_c(\tilde s_c) 
&= p'\int_0^{\tilde s_c} (c-h(t)) (\tilde y_c (t))^{\frac{1}{p}}  dt - p'\int_0^{\tilde s_c}  f(t) dt \\
& \leq p' (c-h_m) \int_0^{\tilde s_c} (\tilde y_c (t))^{\frac{1}{p}} dt - p'\int_0^{\tilde s_c}  f(t) dt \\
& <  - p'\int_0^{\tilde s_c}  f(t) dt, 
\end{split}
\notag
\ee
so that $\ds\int_0^{\tilde s_c}  f(t) dt<0$. Thanks to \eqref{pf}, we get $\tilde s_c <1$, and hence $c \in \mathcal A_0$ for all $c<h_m$. 

\smallskip

\noindent Case III: $-\infty<c_F < c_B= \infty$. The proof that $\mathcal A_0 \neq \emptyset$ is the same as in Case I. Assume, for contradiction, that $\mathcal A_1 = \emptyset$. Then, since  $A_0$ is connected (by the monotonicity of $\tilde y_c$ in $c$), we have $\mathcal A_0=(c_F, \infty)$. Fix $c_0>c_F$ and set 
\be
M:=\max\Big \{c_0, ~h_M+\frac{\int_0^1 f(t) dt}{\int_0^{s_*} (\tilde y_{c_0}(t))^{\frac{1}{p}}dt} \Big \}.
\notag 
\ee  
For any $c> c_0$, we get $\tilde y_c(t) >\tilde y_{c_0}(t)$ on $[0, s_*]$. Moreover, since $f>0$ on $(s_*, 1)$ and $s_*<\tilde s_c<1$ for all $c \in (c_F, \infty)=\mathcal A_0$,  integrating \eqref{y2} from $0$ to $\tilde s_c$ yields that for all $c>M$, 
\be
\begin{split}
0 
& = \tilde y_c(\tilde s_c)-\tilde y_c(0) \\ 
&= p'\int_0^{\tilde s_c} (c-h(t)) (\tilde y_c (t))^{\frac{1}{p}}  dt - p'\int_0^{\tilde s_c}  f(t) dt \\
& \geq  p' (c-h_M) \int_0^{s_*} (\tilde y_c (t))^{\frac{1}{p}} dt - p'\int_0^{1}  f(t) dt \\
& \geq  p' (c-h_M) \int_0^{s_*} (\tilde y_{c_0} (t))^{\frac{1}{p}} dt - p'\int_0^{1}  f(t) dt \\
&>0,
\end{split}
\notag
\ee
a contradiction. Thus, $\mathcal A_1 \neq \emptyset$.

\smallskip 

\noindent Case IV: $-\infty=c_F<c_B= \infty$. Observe that the proof of $\mathcal A_0 \neq \emptyset$ in Case~II does not rely on $\mathcal A_1 \neq \emptyset$, whereas the proof of $\mathcal A_1 \neq \emptyset$ in Case~III does use $\mathcal A_0 \neq \emptyset$. Therefore, we first obtain $\mathcal A_0 \neq \emptyset$ by the same argument as in Case~II, and then deduce $\mathcal A_1 \neq \emptyset$ by fixing $c_0 \in \mathcal A_0$, exactly as in Case~III.

\smallskip

Consequently, in all four cases we have established that both $\mathcal A_0$ and $\mathcal A_1$ are nonempty. We now define 
\be
c^*:=\sup \mathcal A_0=\inf \mathcal A_1
\notag
\ee
and show that $c^* \notin   \mathcal A_0 \cup \mathcal A_1$. To argue as in Case I, choose a sequence $\{c_n\} \subset \mathcal A_1$ with $c_n \downarrow c^*$. Then, for any $n \in \mathbb{N}$, we have $\tilde s_{c_n}=1$ and $\tilde y_{c_n}(1)>0$, while $\tilde y_{c_n}(t)$ is bounded below on $[0,1]$ by
\be
\tilde y(t)=
\begin{cases}
\tilde y_{c^*}(t), \quad & t \in [0, s_*), \\
\hat y_{c_1}(t), \quad & t \in [s_*, 1]. 
\end{cases}
\notag
\ee
Since $\hat y_{c_1}(1)=0$, we obtain that $\tilde y_{c_n}(t) \downarrow \tilde y_{c^*}(t)$ uniformly on $[0, 1]$. Hence $\tilde s_{c^*}=1$ and $\tilde y_{c^*}(1) \geq 0$, so $c^* \notin \mathcal A_0$. Similarly, we take $\{d_n\} \subset \mathcal A_0$ with $d_n \uparrow c^*$. Since for every $n \in \mathbb{N}$, $\hat y_{d_n}(t)$ is bounded below on $[0, 1]$ by 
\be
\hat y(t)=
\begin{cases}
\tilde y_{d_1}(t), \quad & t \in [0, s_*], \\
\hat y_{c^*}(t), \quad & t \in (s_*, 1], 
\end{cases}
\notag
\ee
the same reasoning shows $c^* \notin \mathcal A_1$.
\end{proof}

\bigskip

We are now in a position to introduce two existence results and one nonexistence result for the BVP \eqref{ode}.
All sufficient conditions for existence and nonexistence in the case $p=2$ were studied in \cite{MMM04}, but we show that these conditions remain valid under our weak regularity assumptions, too.
The conditions \eqref{Excondition_1}, \eqref{Excondition_2}, and \eqref{existencecondition} stated in Theorem \ref{existencebvp} and Theorem \ref{existencebvp2} guarantee the strict ordering $c_F < c_B$, which in turn ensures the existence of a positive solution to the BVP \eqref{ode}, thanks to Proposition \ref{existenceprop}.

\begin{theorem}[Existence for \eqref{ode}] \label{existencebvp} Let $f$ and $h$ satisfy \eqref{pf}, \eqref{semicont}, and \textbf{(H3)}.  If either condition
\be\label{Excondition_1}
\liminf_{t \rightarrow s_*^-}\frac{(s_*-t)^{p'-1}(h(s_*)-h(t))}{-f(t)}>-\infty
\ee
or
\be\label{Excondition_2}
\limsup_{t \rightarrow s_*^+}\frac{(t-s_*)^{p'-1}(h(s_*)-h(t))}{f(t)}<\infty 
\ee
holds, then the problem \eqref{ode} admits a unique positive solution $y=y(t)$ on $(0,1)$ if and only if $c=c^*$, where $c^*$ satisfies 
\be \label{range}
h_m-(p')^{\frac{1}{p'}} p^{\frac{1}{p}} \tilde \mu^{\frac{1}{p'}} < c^*  < h_M+(p')^{\frac{1}{p'}} p^{\frac{1}{p}} \hat \mu^{\frac{1}{p'}}
\ee  
with $h_m$, $h_M$, $\tilde \mu$, and $\hat \mu$ defined in \eqref{maxminh} and \eqref{Nmu}, respectively.
\end{theorem}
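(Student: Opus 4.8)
The plan is to reduce the whole statement to the single strict inequality $c_F<c_B$ and then invoke Proposition \ref{existenceprop}. Indeed, once $c_F<c_B$ is known, that proposition yields a positive solution of \eqref{ode} precisely at the unique value $c=c^*\in(c_F,c_B)$, and the two-sided bound \eqref{range} is then immediate from the estimates \eqref{rcF}--\eqref{rcB}: since $c^*\in(c_F,c_B)$, we get $c^*>c_F\ge h_m-(p')^{1/p'}p^{1/p}\tilde\mu^{1/p'}$ and $c^*<c_B\le h_M+(p')^{1/p'}p^{1/p}\hat\mu^{1/p'}$ (read in the extended reals, so that $\tilde\mu=+\infty$ or $\hat\mu=+\infty$ simply makes the corresponding endpoint of \eqref{range} equal to $-\infty$ or $+\infty$ and hence vacuous). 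Thus the entire content of the theorem is the inequality $c_F<c_B$. Since Lemmas \ref{lFNC} and \ref{lBNC} (together with the definitions of $c_F$ and $c_B$) already give $c_F\le h(s_*)\le c_B$, it suffices to strengthen one side: I will show that \eqref{Excondition_1} forces $c_F<h(s_*)$, and symmetrically that \eqref{Excondition_2} forces $c_B>h(s_*)$; either one gives $c_F<c_B$.

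The crux is the implication \eqref{Excondition_1}$\Rightarrow c_F<h(s_*)$. By Theorem \ref{Fexthm} one has $\tilde y_c(s_*)=0$ exactly for $c\le c_F$, so $c_F<h(s_*)$ is equivalent to the overshoot $\tilde y_{h(s_*)}(s_*)>0$. I would prove this by contradiction, working with $z:=\bigl(\tilde y_{h(s_*)}\bigr)^{1/p'}$, which is positive on $(0,s_*)$ and satisfies $z'=(h(s_*)-h(t))+(-f(t))\,z^{-1/(p-1)}$ a.e.\ (as in the proof of Lemma \ref{Uniqueness2}), and assuming $z(s_*)=0$. Integrating from $t$ to $s_*$ and discarding the nonnegative reaction contribution gives the upper bound
\be
z(t)\le\Phi(t):=\int_t^{s_*}\bigl(h(\tau)-h(s_*)\bigr)\,d\tau,
\notag
\ee
and, because $h$ is continuous at $s_*$ by \textbf{(H3)}, this bound is sublinear: $\Phi(t)/(s_*-t)\to0$ as $t\to s_*^-$. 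On the other hand, condition \eqref{Excondition_1} supplies $K,\delta>0$ with $h(s_*)-h(t)\ge -K(-f(t))(s_*-t)^{-(p'-1)}$ on $(s_*-\delta,s_*)$; inserting the trial barrier $b(t)=\lambda(s_*-t)$ with $0<\lambda<K^{-(p-1)}$ into the $z$-equation shows that $z(t)\le b(t)$ forces $z'(t)\ge0$. Hence $z<b$ cannot persist on any interval $(t_1,s_*)$ — otherwise $z$ would be nondecreasing there and therefore $\le z(s_*)=0$, contradicting positivity — so there is a sequence $t_n\to s_*^-$ with $z(t_n)\ge\lambda(s_*-t_n)$.

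Evaluating both bounds at these points yields $\lambda\le z(t_n)/(s_*-t_n)\le\Phi(t_n)/(s_*-t_n)\to0$, a contradiction; hence $z(s_*)>0$ and $c_F<h(s_*)$. The argument under \eqref{Excondition_2} is fully symmetric: one runs the same scheme for $\hat y_{h(s_*)}$ with $z=(\hat y_{h(s_*)})^{1/p'}$ on $(s_*,1)$, where now $f>0$ makes the reaction term push $z$ downward and the barrier $b(t)=\lambda(t-s_*)$ gives $z\le b\Rightarrow z'\le0$, leading to $\hat y_{h(s_*)}(s_*)>0$ and $c_B>h(s_*)$ via Theorem \ref{exthm}. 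The step I expect to demand the most care is precisely the matching of the one-sided estimates at $s_*$: the barrier produced from \eqref{Excondition_1} yields only a \emph{linear} lower bound $\sim(s_*-t)$, and the contradiction depends on the convective upper bound $\Phi$ being genuinely \emph{sublinear}, which is exactly what continuity of $h$ at $s_*$ provides. One must also keep all differential inequalities in their almost-everywhere (Carath\'eodory) form — deducing monotonicity of $z$ from $z'\ge0$ a.e.\ rather than pointwise — and confine the barrier comparison to the neighborhood $(s_*-\delta,s_*)$ on which \eqref{Excondition_1} is available.
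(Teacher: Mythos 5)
Your proof is correct, and it reaches the decisive inequality $c_F<c_B$ by a genuinely different route from the paper. Both arguments share the same skeleton: reduce to $c_F<c_B$ via Proposition \ref{existenceprop}, observe $c_F\le h(s_*)\le c_B$ from Lemmas \ref{lFNC}--\ref{lBNC}, obtain \eqref{range} from \eqref{rcF}--\eqref{rcB}, and then make one of the two inequalities strict. For that last step the paper works under \eqref{Excondition_2}: assuming $c_B=h(s_*)$ it first deduces $\hat\nu=0$ from Remark \ref{remarkbvp} (hence from the nonexistence Theorem \ref{Nonexistence}), then proves the sublinearity claim \eqref{limit0} by a Gronwall-type argument, and finally reaches a contradiction through the integrated identity \eqref{60}. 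You work under \eqref{Excondition_1} (the mirror case) and replace all of this by two elementary bounds that clash at $s_*$: the upper bound $z\le\Phi$ for $z=(\tilde y_{h(s_*)})^{1/p'}$, obtained by discarding the sign-definite reaction term and sublinear solely by continuity of $h$ at $s_*$, against a linear lower bound $z(t_n)\ge\lambda(s_*-t_n)$ along a sequence, produced by the barrier $\lambda(s_*-t)$ (beneath which the vector field forces $z'\ge0$, incompatible with $z>0=z(s_*)$); I checked the exponent bookkeeping ($\lambda<K^{-(p-1)}$ gives $\lambda^{-(p'-1)}>K$) and the a.e./Carath\'eodory handling, and both are sound. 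Your route dispenses with the appeal to $\hat\nu=0$ and with the Gronwall iteration, and in fact your one-line bound $z\le\Phi$ would also yield the paper's claim \eqref{limit0} directly, so it is arguably a simplification. The only point to tidy is the citation of Theorem \ref{Fexthm} for ``$\tilde y_c(s_*)=0$ iff $c\le c_F$'', which formally presupposes $0<\tilde\mu<\infty$; the down-set structure of $\{c:\tilde y_c(s_*)=0\}$ actually follows from Lemmas \ref{Cdop} and \ref{mono} alone, and the degenerate cases $c_F=-\infty$ or $c_B=+\infty$ make $c_F<c_B$ automatic (as in the paper's Cases II--IV), so this is a matter of attribution rather than a gap.
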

\begin{proof} By Proposition \ref{existenceprop}, it suffices to prove that $c_F < c_B$ in the case where both $c_F$ and $c_B$ are finite. In view of Remark \ref{remarkbvp}, this reduces to showing that either $c_F<h(s_*)$ or $h(s_*)<c_B$. 

Suppose that \eqref{Excondition_2} holds. Then there exist constants $\delta>0$ and $K>0$ such that 
\be \label{Excondition_2_r}
h(s_*)-h(t) < K \frac{f(t)}{(t-s_*)^{p'-1}} \quad \text{for all $t \in (s_*, s_*+\delta)$}. 
\ee
To argue by contradiction, assume that $h(s_*)=c_B$. Then,  as noted in Remark \ref{remarkbvp}, we have  
\be \label{hatnu0}
 0=\ds \hat \nu=\liminf_{t \rightarrow s_*^+}\frac{f(t)}{(t-s_*)^{p'-1}}. 
\ee
By setting $\hat z_c(t)=(\hat y_c(t))^{1/p'}$, the differential equation \eqref{y1} can be written as
\be
\frac{d}{dt} (\hat z_{h(s_*)}(t))^{p'}=(h(s_*)-h(t))(\hat z_{h(s_*)}(t))^{p'-1}-f(t). 
\notag
\ee
Integrating this both sides over $(s_*, s_*+\delta)$ and using the condition$\hat z_{h(s_*)}(s_*)=0$, we obtain from \eqref{Excondition_2_r} that 
\be \label{60}
\begin{split}
0<\big(\hat z_{h(s_*)}(s_*+\delta)\big)^{p'}
& = \int_{s_*}^{s_*+\delta} (h(s_*)-h(t))(\hat z_{h(s_*)}(t))^{p'-1}dt-\int_{s_*}^{s_*+\delta} f(t) dt  \\
&  \leq K \int_{s_*}^{s_*+\delta} \frac{f(t)}{(t-s_*)^{p'-1}}(\hat z_{h(s_*)}(t))^{p'-1} dt-\int_{s_*}^{s_*+\delta} f(t)dt \\
& = \int_{s_*}^{s_*+\delta} f(t) \Big[K\Big(\frac{\hat z_{h(s_*)}(t)}{t-s_*} \Big)^{p'-1}-1 \Big]dt. 
\end{split}
\ee
To derive a contradiction, we now claim that 
\be \label{limit0}
\lim_{t \rightarrow s_*^+} \frac{\hat z_{h(s_*)}(t)}{t-s_*}=0.  
\ee
If this claim holds, then by choosing $\delta>0$ sufficiently small,  the right--hand side of \eqref{60} becomes strictly negative, yielding the desired contradiction. 

Thanks to \eqref{hatnu0}, we first note that 
\be
\limsup_{t \rightarrow s_*^+}~ p' \Big(\frac{\hat z_{h(s_*)}(t)}{t-s_*} \Big)^{p'-1} \big(\hat z_{h(s_*)}'(t)-(h(s_*)-h(t))\big)= -\liminf_{t \rightarrow s_*^+}\frac{f(t)}{(t-s_*)^{p'-1}}=0. 
\notag
\ee
By setting
\be
g(t):=\Big(\frac{\hat z_{h(s_*)}(t)}{t-s_*} \Big)^{p'-1} \big(\hat z_{h(s_*)}'(t)-(h(s_*)-h(t))\big), 
\notag
\ee
we obtain that 
\be \label{derivativeH}
\begin{split}
\frac{d}{dt} (\hat z_{h(s_*)}(t))^{p'}
& = p'(t-s_*)^{p'-1}g(t)+p'(\hat z_{h(s_*)}(t))^{p'-1}(h(s_*)-h(t)) \quad \text{a.e. in $(s_*, 1)$}.
\end{split}
\ee
Since $\limsup_{t \rightarrow s_*^+} g(t)=0$ and $h$ is continuous at $s_*$, for any $\varepsilon>0$ there exists $\gamma>0$ such that
\be
g(t) \leq \varepsilon \quad \text{and} \quad |h(t)-
h(s_*)| \leq \varepsilon \quad \text{a.e. in $(s_*, s_*+\gamma)$}. 
\notag
\ee
Substituting this into \eqref{derivativeH} yields 
\be
\frac{d}{dt} (\hat z_{h(s_*)}(t))^{p'} \leq p'\varepsilon(t-s_*)^{p'-1}+p'\varepsilon (\hat z_{h(s_*)}(t))^{p'-1} \quad \text{a.e. in $(s_*, s_*+\gamma)$}. 
\notag
\ee
Since $\hat z_{h(s_*)}(s_*)=0$, integrating over $(s_*, t)$ gives that for all $t \in (s_*, s_*+\gamma)$
\be \label{34}
(\hat z_{h(s_*)}(t))^{p'} \leq \varepsilon (t-s_*)^{p'}+p'\varepsilon \int_{s_*}^t (\hat z_{h(s_*)}(\tau))^{p'-1} d\tau. 
\ee
Set 
\be
u(t):=\sup_{s_*<\tau<t} \hat z_{h(s_*)}(\tau). 
\notag
\ee
By  continuity of $\hat z_{h(s_*)}$, $u(t)<\infty$ and $u(t) \rightarrow 0$ as $t \rightarrow s_*^+$. Moreover, 
\be
\int_{s_*}^t (\hat z_{h(s_*)}(\tau))^{p'-1} d\tau \leq (t-s_*)(u(t))^{p'-1}.
\notag 
\ee
Applying this to \eqref{34} yields that for all $t \in (s_*, s_*+\gamma)$, 
\be
(\hat z_{h(s_*)}(t))^{p'} \leq \varepsilon (t-s_*)^{p'}+p'\varepsilon (t-s_*)(u(t))^{p'-1}, 
\notag
\ee
so that 
\be
(u(t))^{p'} \leq \varepsilon (t-s_*)^{p'}+p'\varepsilon (t-s_*)(u(t))^{p'-1}. 
\notag
\ee
Dividing by $(t-s_*)^{p'}$, we obtain 
\be \label{70}
\Big(\frac{u(t)}{t-s_*}\Big)^{p'-1}\Big(\frac{u(t)}{t-s_*}- p'\varepsilon \Big) \leq  \varepsilon. 
\ee
Hence, we conclude that for all $t \in (s_*, s_*+\gamma)$, 
\be \label{80}
\frac{u(t)}{t-s_*} \leq p'\varepsilon+\varepsilon^{\frac{1}{p'}}. 
\ee 
Indeed, if $\ds \frac{u(t)}{t-s_*} > p'\varepsilon+\varepsilon^{\frac{1}{p'}}$, then
\be
\ds \frac{u(t)}{t-s_*} -p'\varepsilon > \varepsilon^{\frac{1}{p'}} \quad \text{and} \quad \Big(\frac{u(t)}{t-s_*}\Big)^{p'-1}>\varepsilon^{1-\frac{1}{p'}}, 
\notag
\ee
so that the left--hand side of \eqref{70} would exceed $\varepsilon$, a contradiction. 
Taking $\limsup$ in \eqref{80} and letting $\varepsilon \rightarrow 0^+$ gives
\be
\limsup_{t \rightarrow s_*^+} \frac{u(t)}{t-s_*} =0. 
\notag
\ee
Since $0<\hat z_{h(s_*)}(t)  \leq u(t)$ for all  $t \in (s_*, s_*+\gamma)$, we conclude that  
\be
0 \leq \limsup_{t \rightarrow s_*^+} \frac{\hat z_{h(s_*)}(t)}{t-s_*} \leq \limsup_{t \rightarrow s_*^+} \frac{u(t)}{t-s_*} =0. 
\notag
\ee
This completes the proof of the claim \eqref{limit0}. From \eqref{rcF}--\eqref{rcB}, the inequality \eqref{range} is evident because $c^* \in (c_F, c_B)$. 
\end{proof}

\bigskip

In addition to the existence result established under the conditions \eqref{Excondition_1} and \eqref{Excondition_2}, we can also obtain another existence result under the alternative condition \eqref{existencecondition}. This condition represents a weaker form of the monotonicity (nondecreasing) assumption on $h$ near $s_*$, which guarantees that the conditions \eqref{Excondition_1} and \eqref{Excondition_2} are satisfied.

\begin{theorem}[Existence for \eqref{ode}] \label{existencebvp2} Let $f$ and $h$ satisfy \eqref{pf}, \eqref{semicont}, and \textbf{(H3)}.  If $h$ further satisfies that 
\be \label{existencecondition}
\int_{s_*}^{t_*} (h(s_*)-h(t))dt \leq 0 \quad  \text{for some $t_* \in [0,1]\setminus\{s_*\}$}, 
\ee
then the problem  \eqref{ode} admits a unique positive solution $y=y(t)$ if and only if $c=c^*$, where $c^*$ satisfies \eqref{range}.
\end{theorem}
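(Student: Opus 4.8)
The plan is to reduce everything, via Proposition~\ref{existenceprop}, to the single strict inequality $c_F<c_B$, and then to extract it from the integral condition \eqref{existencecondition}. Since Lemma~\ref{lFNC} and Lemma~\ref{lBNC} already force $c_F\le h(s_*)\le c_B$, the only way $c_F<c_B$ can fail is $c_F=c_B=h(s_*)$, which I will rule out. By the forward--backward symmetry of the two subproblems (the IVP on $(0,s_*)$ mirrors the TVP on $(s_*,1)$, with $f$ and the sign of $h(s_*)-h$ reversed), I may assume the point $t_*$ in \eqref{existencecondition} satisfies $t_*\in(s_*,1]$, and it then suffices to prove $c_B>h(s_*)$; the case $t_*\in[0,s_*)$ is handled identically using $\tilde y_c$ on $(0,s_*)$ in place of $\hat y_c$.

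The main computational step is an integral identity for the substituted unknown. If $c$ admits a positive solution $\hat y_c$ of \eqref{Bbvp}, set $\hat z_c:=\hat y_c^{1/p'}$; then $\hat z_c(s_*)=0$ and $\hat z_c'=(c-h)-f\,\hat z_c^{-(p'-1)}$ a.e.\ on $(s_*,1)$, so integrating from $s_*$ to $t_*$ gives
\[
\int_{s_*}^{t_*}\big(c-h(\tau)\big)\,d\tau=\hat z_c(t_*)+\int_{s_*}^{t_*}\frac{f(\tau)}{\hat z_c(\tau)^{p'-1}}\,d\tau .
\]
Writing $c-h=(c-h(s_*))+(h(s_*)-h)$ and invoking \eqref{existencecondition}, and discarding the nonnegative terms $\hat z_c(t_*)$ and $-\int_{s_*}^{t_*}(h(s_*)-h)$, I obtain
\[
\big(c-h(s_*)\big)(t_*-s_*)\ \ge\ \int_{s_*}^{t_*}\frac{f(\tau)}{\hat z_c(\tau)^{p'-1}}\,d\tau\ >\ 0 ,
\]
because $f>0$ and $\hat z_c>0$ on $(s_*,t_*)$. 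In particular every $c$ admitting a positive solution of \eqref{Bbvp} satisfies $c>h(s_*)$, so $h(s_*)$ itself is inadmissible and $c_B\ge h(s_*)$.

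Upgrading this to the strict inequality $c_B>h(s_*)$ is the delicate point, since without the nondegeneracy \eqref{mu} the threshold $c_B$ need not be attained, so I cannot simply evaluate the identity at $c=c_B$. Instead I argue by contradiction: if $c_B=h(s_*)$, choose $c_n\downarrow h(s_*)$ with each $c_n$ admitting a positive solution $\hat y_{c_n}$ of \eqref{Bbvp}. Let $\hat y_{h(s_*)}$ be the always-existing solution of the TVP \eqref{y1} at $c=h(s_*)$, which is positive on $(s_*,1)$ by Lemma~\ref{Fpositiveness}. Strict monotonicity in $c$ (Lemma~\ref{mono} and Remark~\ref{mono_positive}) yields $\hat y_{c_n}<\hat y_{h(s_*)}$ on $(s_*,1)$, hence $\hat z_{c_n}<\hat z_{h(s_*)}$, and since $f>0$,
\[
\int_{s_*}^{t_*}\frac{f}{\hat z_{c_n}^{p'-1}}\,d\tau\ \ge\ \int_{s_*}^{t_*}\frac{f}{\hat z_{h(s_*)}^{p'-1}}\,d\tau\ =:\ I\ >\ 0 .
\]
Combining with the previous display gives $(c_n-h(s_*))(t_*-s_*)\ge I>0$ for every $n$, which is incompatible with $c_n\to h(s_*)$. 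Thus $c_B>h(s_*)\ge c_F$, i.e.\ $c_F<c_B$. The crucial feature making this work is that the possibly singular integral $\int_{s_*}^{t_*}f\,\hat z_{c_n}^{-(p'-1)}$ is bounded below, uniformly in $n$, by the fixed positive number $I$ coming purely from monotonicity and the positivity of the reference profile $\hat y_{h(s_*)}$.

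Finally, $c_F<c_B$ lets Proposition~\ref{existenceprop} deliver existence and uniqueness of the positive solution of \eqref{ode}, occurring precisely at the single speed $c^*:=\sup\mathcal A_0=\inf\mathcal A_1\in(c_F,c_B)$; the two-sided bound \eqref{range} for $c^*$ then follows from \eqref{rcF}--\eqref{rcB} exactly as in the proof of Theorem~\ref{existencebvp}. I expect the step isolating $c_B>h(s_*)$ in the non-attained regime to be the main obstacle, precisely because one must control the blow-up of $f/\hat z_c^{p'-1}$ as $\hat z_c\to0$ near $s_*$ along the approximating sequence rather than at the threshold speed itself.
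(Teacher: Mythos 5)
Your proposal is correct, and its core computation coincides with the paper's: both integrate the Riccati-type equation $z'=(c-h)-f\,z^{-(p'-1)}$ for $z=y^{1/p'}$ over the interval between $s_*$ and $t_*$ and exploit \eqref{existencecondition} together with the sign of $f$ there. The difference lies in where the identity is applied and how strictness is extracted. The paper applies it to the IVP/TVP solution at the single speed $c=h(s_*)$ (e.g.\ to $\tilde y_{h(s_*)}$ on $(t_*,s_*)$ when $t_*<s_*$) and concludes in two lines that $\tilde z_{h(s_*)}(s_*)>\tilde z_{h(s_*)}(t_*)\ge 0$, i.e.\ that this solution does not vanish at $s_*$; hence $h(s_*)$ is not an admissible speed for the auxiliary BVP, and $c_F<h(s_*)$ follows because the admissible set is closed at its threshold (via Lemma~\ref{Cdop}). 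You instead apply the identity to solutions of \eqref{Bbvp} that do vanish at $s_*$, deduce $c>h(s_*)$ for every admissible $c$, and then upgrade to $c_B>h(s_*)$ by a contradiction argument with $c_n\downarrow h(s_*)$, using monotonicity in $c$ to bound $\int_{s_*}^{t_*} f\,\hat z_{c_n}^{-(p'-1)}\,d\tau$ below by the fixed positive constant $I$ built from the reference profile $\hat y_{h(s_*)}$. Your route is longer but buys independence from the attainment of $c_B$, which the paper's argument implicitly relies on; the paper's route is more economical because working at $c=h(s_*)$ directly makes the strict inequality $\tilde z_{h(s_*)}(s_*)>0$ fall out of the same integration. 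Both arguments share the same minor technicality (justifying the integrated identity up to the endpoint where $z$ vanishes, via monotone convergence of the nonnegative singular integrand), and your concluding reduction to Proposition~\ref{existenceprop} and the bound \eqref{range} matches the paper exactly.
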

\begin{proof} It suffices to prove that $c_F < h(s_*)$ or $h(s_*) < c_B$, ensuring that $c_F<c_B$.  Suppose that \eqref{existencecondition} holds for some $t_* \in [0, s_*)$. Let $\tilde y_{h(s_*)}(t)$ be a positive solution of the forward initial value problem \eqref{y2} on $(0, s_*)$ for $c=h(s_*)$, and define 
\be
\tilde z_{h(s_*)}(t):=(\tilde y_{h(s_*)}(t))^{1/p'}>0 \quad \text{on $(0, s_*)$}. 
\notag
\ee 
Then we have
\be
\tilde z_{h(s_*)}'(t) = h(s_*)-h(t)-\frac{f(t)}{(\tilde z_{h(s_*)}(t))^{1/(p-1)}} \quad \text{a.e. in $(0, s_*)$}. 
\notag
\ee
Integrating both sides over $(t_*, s_*)$, and using \eqref{existencecondition} together with $f<0$ on $(0, s_*)$, we obtain 
\be
\tilde z_{h(s_*)}(s_*)-\tilde z_{h(s_*)}(t_*) = \int_{t_*}^{s_*} (h(s_*)-h(t))dt-\int_{t_*}^{s_*} \frac{f(t)}{(\tilde z_{h(s_*)}(t))^{1/(p-1)}}dt >0. 
\notag
\ee
Since $\tilde z_{h(s_*)}(t_*)\geq 0$, we have $\tilde z_{h(s_*)}(s_*)>0$ from which we deduce that $\tilde y_{h(s_*)}(s_*)>0$. Hence, $c_F < h(s_*)$.  In a similar manner, if the assumption \eqref{existencecondition} holds for some $t_* \in (s_*, 1]$, then we conclude that $h(s_*)<c_B$.
\end{proof}

As the final theorem, we provide the conditions \eqref{nonh}–\eqref{nonFf} that guarantee $c_F = c_B$, which in turn ensures that the BVP \eqref{ode} admits no positive solution for any $c \in \RR$. 

\begin{theorem}[Nonexistence for \eqref{ode}]\label{nonexistencebvp} Let $f$ and $h$ satisfy \eqref{pf}, \eqref{semicont}, and \textbf{(H3)}.  Further suppose that 
\be \label{nonh}
\ds \int_{s_*}^t (h(s_*)-h(\tau))d\tau>0 \quad  \text{for all $t \in [0,1]\setminus\{s_*\}$}
\ee
holds together with 
\be \label{nonBf}
f(t)<\frac{1}{p'p^{p'/p}}(h(s_*)-h(t)) \Big[\int_{s_*}^t (h(s_*)-h(\tau))d\tau\Big]^{\frac{p'}{p}}, \quad t \in (s_*, 1),
\ee
and 
\be \label{nonFf}
f(t)>\frac{1}{p'p^{p'/p}}(h(s_*)-h(t)) \Big[ \int_{s_*}^t (h(s_*)-h(\tau))d\tau \Big]^{\frac{p'}{p}}, \quad t \in (0,s_*).
\ee
Then the problem \eqref{ode} admits no positive solution for any $c \in \RR$. 
\end{theorem}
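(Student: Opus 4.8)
The plan is to deduce everything from Proposition~\ref{existenceprop}: since that result produces a positive solution of \eqref{ode} for some $c$ precisely when $c_F<c_B$, nonexistence for every $c\in\RR$ is equivalent to $c_F=c_B$. The standing hypotheses \eqref{pf}, \eqref{semicont}, \textbf{(H3)} are exactly those needed for the Proposition and for the comparison machinery. Lemmas~\ref{lFNC} and \ref{lBNC} already give $c_F\le h(s_*)\le c_B$ for free, so it suffices to prove the reverse inequalities. I would do this by working at the single speed $c=h(s_*)$ and showing that both the forward solution $\tilde y_{h(s_*)}$ of \eqref{y2} and the backward solution $\hat y_{h(s_*)}$ of \eqref{y1} vanish at $s_*$. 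Indeed, $\tilde y_{h(s_*)}(s_*)=0$ exhibits $\tilde y_{h(s_*)}$ as a positive solution of \eqref{Fbvp} at $c=h(s_*)$ (its positivity on $(0,s_*)$ being automatic from Lemma~\ref{Fpositiveness}), forcing $c_F\ge h(s_*)$, and symmetrically $\hat y_{h(s_*)}(s_*)=0$ forces $c_B\le h(s_*)$; together these give $c_F=h(s_*)=c_B$.

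Both vanishing statements will come from one barrier. Set
\[
\Phi(t):=\int_{s_*}^t\bigl(h(s_*)-h(\tau)\bigr)\,d\tau,
\]
so that $\Phi(s_*)=0$, $\Phi>0$ elsewhere by \eqref{nonh}, and $\Phi'(t)=h(s_*)-h(t)$ a.e. I would test the absolutely continuous function
\[
\varphi(t):=\Bigl(\tfrac1p\,\Phi(t)\Bigr)^{p'}=\frac{1}{p^{p'}}\,\Phi(t)^{p'}
\]
against $\Gamma(t,\cdot,h(s_*))$. Using $p'/p=p'-1$ and $p-1=p/p'$, a direct computation collapses to the clean identity
\[
\varphi'(t)-\Gamma\bigl(t,\varphi(t),h(s_*)\bigr)=p'\Bigl[\,f(t)-\frac{1}{p'p^{p'/p}}\,\Phi'(t)\,\Phi(t)^{p'-1}\Bigr]\qquad\text{a.e.}
\]
Recalling $\Phi'=h(s_*)-h$ and $p'-1=p'/p$, the bracket is exactly $f(t)$ minus the common right-hand side of \eqref{nonBf}--\eqref{nonFf}. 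The factor $\tfrac1p$ is not arbitrary: it is the maximizer of $x\mapsto x^{p'-1}(1-x)$, which is why the critical constant $\tfrac{1}{p'p^{p'/p}}$ appears in the hypotheses and why $\varphi$ is the borderline profile reaching $0$ at $s_*$.

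On $(0,s_*)$ the hypothesis \eqref{nonFf} says precisely that the bracket is strictly positive, so $\varphi$ is a strict supersolution; since $\tilde y_{h(s_*)}(0)=0<\varphi(0)$ and $\varphi>0$ on $(0,s_*)$ (both from \eqref{nonh}), Lemma~\ref{Flecom}(i) gives $\tilde y_{h(s_*)}\le\varphi$ on $[0,s_*]$, whence $\tilde y_{h(s_*)}(s_*)\le\varphi(s_*)=0$. On $(s_*,1)$ the hypothesis \eqref{nonBf} flips the sign of the bracket, so the \emph{same} $\varphi$ is a strict subsolution; taking $\varphi$ as the subsolution and $\hat y_{h(s_*)}$ as the supersolution in Lemma~\ref{Flecom}(ii), and noting $\varphi(1)>0=\hat y_{h(s_*)}(1)$, yields $\hat y_{h(s_*)}\le\varphi$ on $[s_*,1]$ and hence $\hat y_{h(s_*)}(s_*)=0$. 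This establishes $c_F=h(s_*)=c_B$, and Proposition~\ref{existenceprop} then rules out a positive solution of \eqref{ode} for every $c\in\RR$.

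The main obstacle I anticipate is the verification of the displayed identity with the exact constant $\tfrac{1}{p'p^{p'/p}}$, so that \eqref{nonBf}--\eqref{nonFf} translate verbatim into the super/subsolution inequalities; this is where the special choice of the coefficient $\tfrac1p$ in $\varphi$ must be pinned down. Secondary points to check are the admissibility of $\varphi$ in Lemma~\ref{Flecom} — namely that $\varphi$ is absolutely continuous with $\varphi'=\tfrac{p'}{p^{p'}}\Phi^{p'-1}\Phi'$ a.e.\ and strictly positive on the relevant open subinterval — and the observation, implicit in \eqref{nonFf}--\eqref{nonBf} (since $f<0$ on $(0,s_*)$ and $f>0$ on $(s_*,1)$), that $\Phi'=h(s_*)-h$ is forced to be $\le0$ on $(0,s_*)$ and $\ge0$ on $(s_*,1)$, which is consistent with $\Phi>0$ throughout.
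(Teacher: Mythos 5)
Your proposal is correct and follows essentially the same route as the paper: your barrier $\varphi=\bigl(\tfrac1p\Phi\bigr)^{p'}$ is exactly the paper's comparison function $K$, the identity reducing $\varphi'-\Gamma(t,\varphi,h(s_*))$ to $p'$ times ($f$ minus the common right-hand side of \eqref{nonBf}--\eqref{nonFf}) is the computation the paper performs, and Lemma~\ref{Flecom} is applied in the same way to force $\tilde y_{h(s_*)}(s_*)=\hat y_{h(s_*)}(s_*)=0$. The only cosmetic difference is that you package the final step as $c_F=c_B=h(s_*)$ plus Proposition~\ref{existenceprop}, whereas the paper spreads the conclusion to all $c\le h(s_*)$ and $c\ge h(s_*)$ directly via the monotonicity Lemma~\ref{mono}; these are the same mechanism.
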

\begin{proof} We first observe that the given assumptions \eqref{nonh}-\eqref{nonFf} imply
\be
\frac{H(1)-H(s_*)}{1-s_*} < h(s_*) <  \frac{H(s_*)}{s_*}, 
\notag
\ee
since $h(s_*)>h(t)$ for all $t \in (s_*, 1)$ and $h(s_*)<h(t)$  for all $t \in (0, s_*)$. 

\smallskip

To complete the proof, it suffices to show that $\tilde y_{h(s_*)}=0$ and $\hat y_{h(s_*)}=0$, thanks to the monotonic dependence of $\tilde y_c$ and $\hat y_c$ on $c$.  For this purpose, we define a function 
\be \label{K}
K(t)=\Big[ \frac{1}{p}\int_{s_*}^t (h(s_*)-h(\tau)) d\tau \Big]^{p'}, \quad \quad  t \in [0,1].
\ee
Then, $K(t) > 0$ for  all $t \in [0,1]\setminus \{s_0\}$, and we compute
\be
\begin{split}
K'(t)
& =\frac{p'}{p} (h(s_*)-h(t)) \Big[\frac{1}{p}\int_{s_*}^t (h(s_*)-h(\tau)) d\tau \Big]^{\frac{p'}{p}} \\
& = p'(h(s_*)-h(t))\Big[\frac{1}{p}\int_{s_*}^t (h(s_*)-h(\tau)) d\tau \Big]^{\frac{p'}{p}}- (h(s_*)-h(t)) \Big[\frac{1}{p}\int_{s_*}^t (h(s_*)-h(\tau)) d\tau \Big]^{\frac{p'}{p}} \\
& = p'(h(s_*)-h(t))\Big[\frac{1}{p}\int_{s_*}^t (h(s_*)-h(\tau)) d\tau \Big]^{\frac{p'}{p}}-\frac{p'}{p'p^{p'/p}} (h(s_*)-h(t)) \Big[\int_{s_*}^t (h(s_*)-h(\tau)) d\tau \Big]^{\frac{p'}{p}}. 
\notag
\end{split}
\ee

\smallskip

We first consider the case $c \leq h(s_*)$. From \eqref{nonFf}, we have
\be \label{K1}
K'(t) > p'[(h(s_*)-h(t))K(t)^{1/p}-f(t)]. 
\ee
Let $\tilde y_{h(s_*)}$ denote the positive solution of the IVP \eqref{y2} on $(0, s_*)$ corresponding to $c = h(s_*)$. Since $K(0) > 0 = \tilde y_{h(s_*)}(0)$, Lemma~\ref{Flecom}(i) together with \eqref{K1} yields
\be
0 \leq \tilde y_{h(s_*)}(t) \leq K(t),  \quad t \in [0, s_*]. 
\notag
\ee
Because $K(s_*) = 0$, it follows that $\tilde y_{h(s_*)}(s_*) = 0$. By Lemma~\ref{mono}, for any $c \leq h(s_*)$, the positive solution $\tilde y_c$ of \eqref{y2} satisfies $\tilde y_c(s_*)=0$. Hence, there is no positive solution to the BVP \eqref{ode} for any $c \leq h(s_*)$.

\smallskip

Similarly, consider the case $c \geq h(s_*)$. Let $\hat y_{h(s_*)}$ be the positive solution of the TVP \eqref{y1} on $(s_*, 1)$ for $c = h(s_*)$. Using \eqref{nonh}, \eqref{nonBf}, and Lemma~\ref{Flecom}(ii), we get $\hat y_{h(s_*)}(s_*)=0$. Applying Lemma \ref{mono}, we conclude that $\hat y_c(s_*)=0$ for any $c \geq h(s_*)$.  This completes the proof. 

\end{proof}

\medskip

Theorems \ref{existencetws}–\ref{nonexistencetws} are immediate consequences of the previous results. Indeed, under assumptions \textbf{(H1)}–\textbf{(H3)}, they are obtained from Proposition~\ref{prop} and Theorems~\ref{existencebvp}–\ref{nonexistencebvp} by setting $f(t) := (d(t))^{\frac{1}{p-1}} g(t)$. The properties \textup{(i)}–\textup{(iv)} in Theorem \ref{existencetws} follow directly from Remark \ref{property of U} and together with the proof of Proposition \ref{prop} (see \cite[Section 3.2]{DJKZ}).  

\bigskip

\section*{Appendix}

In this appendix, we prove that for $1<p \leq 2$, both one-sided derivatives $U'(z^-)$ and $U'(z^+)$ are nonzero when $U(z)=s_*$, provided that the profile is monotone. Hence, the nonvanishing assumptions in Definition \ref{sdef}(a) and (b) can be removed. By \eqref{Ldef}, if one of $U'(z^-)$ and $U'(z^+)$ is zero, then so is the other. Therefore, it suffices to prove that $U'(z) \neq 0$ whenever $U(z)=s_*$.

\begin{theorem}\label{thmapp}  Let $1<p \leq 2$, and let $U$ be a monotone profile of \eqref{odeU} for some $c \in \RR$. Then $U'(z)<0$ whenever $U(z)=s_*$.
\end{theorem}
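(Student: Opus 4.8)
The plan is to work with the flux variable $v(z)=d(U)|U'|^{p-2}U'$ introduced in Definition~\ref{sdef}(c) and to show that $U$ cannot detach from the level $s_*$ with zero slope. Since $U$ is nonincreasing with $U(-\infty)=1$ and $U(+\infty)=0$, the set $\{z:U(z)=s_*\}$ is a nonempty compact interval (possibly a single point); I set $z_0:=\sup\{z:U(z)=s_*\}$, so that $U(z_0)=s_*$ and $U(z)<s_*$ for $z>z_0$. It then suffices to prove $U'(z_0+)<0$, to argue symmetrically on the left, and to rule out a genuine flat interval. Suppose, for contradiction, that $U'(z_0+)=0$. Writing $w:=s_*-U\ge 0$ on a right neighborhood $(z_0,z_0+\delta)$ chosen small enough that $U$ takes values in a subinterval of $(0,s_*)$ on which $h$ is continuous and $d$ has no jump other than possibly at $s_*$ itself, one has $w(z_0)=0$, $w'=|U'|\ge 0$, and $P:=-v=d(U)(w')^{p-1}$ with $P(z_0+)=0$ (because $(w')^{p-1}\to0$ as $U'(z_0+)=0$ and $d(U)\to d(s_*)$ is finite).

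The core computation is to integrate the identity $P'=-(c-h(U))w'+g(U)$, which is just \eqref{eqv} (equivalently Remark~\ref{property of U}(ii)) rewritten for $U'=-w'<0$. Using $\int_{z_0}^z w'\,d\zeta=w(z)$, the bound $|c-h(U)|\le |c-h(s_*)|+\varepsilon=:C_1$ from continuity of $h$ at $s_*$, and $g(U)<0$ on $(z_0,z_0+\delta)$ (since $U<s_*$), I would obtain the one-sided estimate $0\le P(z)\le C_1\,w(z)$. As $d(U)\ge d_0>0$ near $s_*$, this yields the differential inequality
\be
w'(z)\le \tilde C\,\big(w(z)\big)^{\frac{1}{p-1}},\qquad z\in(z_0,z_0+\delta),\quad w(z_0)=0,
\notag
\ee
with $\tilde C=(C_1/d_0)^{1/(p-1)}$. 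Here the sign of $g$ on $(0,s_*)$ is precisely what renders the $g$-term harmless; the mirror computation on $\{U>s_*\}$, run in the reversed variable $\tau=z_0-z$, uses $g>0$ on $(s_*,1)$ in exactly the same way.

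This is the step where the restriction $1<p\le 2$ is decisive: then $\tfrac{1}{p-1}\ge 1$, so if $w\not\equiv 0$ I may take the last zero $z_1\in[z_0,z_0+\delta)$ preceding a point where $w>0$, divide to get $w'\,w^{-1/(p-1)}\le\tilde C$ on $(z_1,\cdot\,]$, and integrate; letting the lower endpoint tend to $z_1$ makes $\int_{0^+} s^{-1/(p-1)}\,ds$ diverge while the right-hand side stays finite, a contradiction. Hence $w\equiv 0$, i.e.\ $U\equiv s_*$ on $(z_0,z_0+\delta)$, contradicting $U<s_*$ there, so $U'(z_0+)<0$; the reversed-variable argument gives $U'(z_0-)<0$. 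For $p>2$ one has $\tfrac{1}{p-1}<1$, the integral converges, and this forcing fails — which is exactly why a supplementary growth hypothesis on $d^{\frac{1}{p-1}}g$ near $s_*$ is needed in that regime, as noted in Remark~\ref{Mrem1}.

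Finally I would exclude a nondegenerate interval $\{U=s_*\}=[a,b]$ with $a<b$: on $(a,b)$ we have $U'\equiv 0$, hence $v\equiv 0$ and $v(b)=0$, whereas the previous step applied at $z_0=b$ gives $U'(b+)<0$ and therefore $v(b+)=-\lim_{z\to b+}d(U)|U'|^{p-1}<0$; this contradicts either the continuity of $v$ (when $s_*\notin\{m_i\}$) or the transition condition \eqref{Ldef} (when $s_*=m_i$, where $U'(b-)=0$ would force $U'(b+)=0$). Thus $\{U=s_*\}=\{z_0\}$ and $U'(z_0)<0$. The main obstacle I anticipate is not the estimate itself but the bookkeeping in the flux integration — verifying $P(z_0+)=0$, that no jump points of $d$ or $h$ intervene on $(z_0,z_0+\delta)$, and that the one-sided derivatives exist there via \cite[Lemma~3.2]{DJKZ} — together with the sharp exploitation of $1/(p-1)\ge 1$ in the non-departure argument, which is the single place the hypothesis $p\le 2$ is used and which genuinely breaks down for $p>2$.
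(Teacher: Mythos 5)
Your proposal is correct and is in substance the same argument as the paper's: both hinge on showing that the flux $|v|=d(U)|U'|^{p-1}$ is bounded by a constant times $|s_*-U|$ near the touching point (you get this by integrating $v'=-(c-h(U))U'-g(U)$ and discarding the $g$-term by its sign; the paper gets the identical bound $(y(t))^{1/p'}\le\gamma|s_*-t|$ from the first-order equation in the variable $t=U$), and both then exploit the divergence of $\int_{0^+}s^{-1/(p-1)}\,ds$ for $1<p\le 2$. The only difference is cosmetic: you phrase the endgame as an Osgood-type non-departure argument for $w=s_*-U$ in the variable $z$, whereas the paper inverts $U$ and shows the traversal ``time'' $\int (d(U))^{\frac{1}{p-1}}(y(U))^{-\frac1p}\,dU$ would be infinite — two readings of the same integral.
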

\begin{proof} Suppose that there exists $z^* \in (z_0, z_1)$ such that $U(z^*)=s_*$ and $U'(z^*)=0$. If there is an open interval $(a, b) \subset (z_0, z_1)$ on which $U(z)=s_*$, we redefine $z^*=b$. Then the restriction $U|_{(z^*, z_1)}$ has a continuous strictly decreasing inverse 
\be
z=U^{-1}: (0, s_*) \longrightarrow (z^*, z_1).
\notag
\ee
For $U \in (0, s_*)$, set
\be \label{app2}
t:=U,  \quad w(t):=v(z(U)), \quad \text{and} \quad y(t):=|w(t)|^{p'}.
\ee
Then $y=y(t)$ is a positive solution of the first-order BVP
\begin{equation}\label{odeyappendix}
\begin{cases}
y'(t) = p' \left[ (c-h(t)) (y(t))^{\frac{1}{p}} -(d(t))^{\frac{1}{p-1}}g(t)  \right],  \quad \text{a.e. $t \in (0,s_*)$}, \\
 y(0)=0=y(s_*).
\end{cases}
\end{equation}
Here, the boundary condition $y(s_*)=0$ follows from $U'(z^*)=0$ and \eqref{v}; see \cite[Section 3.2]{DJKZ} for details of this reduction. 

Since $(d(t))^{\frac{1}{p-1}}g(t) <0$ on $(0, s_*)$ and $h$ is bounded on $(0, 1)$, the ODE in \eqref{odeyappendix} yields
\be
\frac{d}{dt} (y(t))^{\frac{1}{p'}} > c-h(t) > c-\sup_{0<t<s_*}h(t) \geq -\gamma, \quad \text{a.e. $t \in (0,s_*)$}
\notag
\ee
for some $\gamma>0$. For a given $t \in (0, s_*)$, integrating over $(t, s_*)$ gives
\be \label{appen3}
 (y(t))^{\frac{1}{p'}} \leq \gamma(s_*-t) \quad \text{for all $t \in (0,s_*)$}.
\ee
Since $(z^*, z_1) \cap M_U$ consists of only finitely many points,
choose $U_0 \in (0,s_*)$ such that $z|_{(U_0,  s_*)} \in C^1(U_0, s_*)$. Then, by \eqref{v}, \eqref{app2} and \eqref{appen3}, 
\be
\begin{split}
b-z(U_0) 
& =\int_{U_0}^{s_*} z'(U) dU=\int_{U_0}^{s_*} -\Big|\frac{d(U)}{w(U)}\Big|^{\frac{1}{p-1}} dU \\
& =\int_{U_0}^{s_*} - \frac{(d(U))^{\frac{1}{p-1}}}{(y(U))^{\frac{1}{p}}}dU \leq -\frac{1}{\gamma^{\frac{1}{p-1}}}\int_{U_0}^{s_*} \frac{(d(U))^{\frac{1}{p-1}}}{(s_*-U)^{\frac{1}{p-1}}}dU.
\end{split}
\notag
\ee
If $1<p \leq 2$, the right-hand side equals $-\infty$, which is a contradiction. Hence $U'(z^*) \neq 0$.  
\end{proof}

\bigskip

\begin{remark} For $p>2$, under the additional assumption that
\be \label{appen4}
-(d(t))^{\frac{1}{p-1}}g(t) \geq K(s_*-t)^{\beta} \quad \text{for all $t \in (s_*-\delta, s_*)$}
\ee
for some $K>0$, $\beta \in (0, \frac{1}{p-1})$, and some  $\delta>0$, one can also prove the statement of Theorem \ref{thmapp}. Indeed, assume $U'(z^*)=0$ and use the same notations as in Theorem \ref{thmapp}. Then it follows from \eqref{appen3} that 
\be
y'(t) >-p'\gamma^{\frac{1}{p-1}+1}(s_*-t)^{\frac{1}{p-1}}+p'(d(t))^{\frac{1}{p-1}}g(t) \quad \text{a.e. $t \in (s_*-\delta,s_*)$}.
\notag
\ee
By \eqref{appen4}, this yields
\be
y'(t) >-p'\gamma^{\frac{1}{p-1}+1}(s_*-t)^{\frac{1}{p-1}}+p'K(s_*-t)^{\beta} \quad \text{a.e. $t \in (s_*-\delta,s_*)$}.
\notag
\ee
Since $\beta<\frac{1}{p-1}$, by choosing $\delta=\delta(\gamma)>0$ smaller if necessary, we may ensure that 
\be
K(s_*-t)^{\beta}> \gamma^{\frac{1}{p-1}+1}(s_*-t)^{\frac{1}{p-1}} \quad \text{for all $t \in (s_*-\delta, s_*)$},
\notag
\ee
and hence 
\be
y'(t)>0 \quad \text{a.e. $t \in (s_*-\delta,s_*)$}.
\notag
\ee
This contradicts the fact that $y(s_*)=0$, and therefore the nonvanishing follows.  

Consequently, under condition \eqref{appen4}, the statement of Theorem \ref{thmapp} can be proved for all $1<p<\infty$, and in this case the nonvanishing assumptions in Definition \ref{sdef}(a) and (b)  can be completely removed. 

\end{remark}


\begin{thebibliography}{99}

\bibitem{AW} D.G. Aronson and H.F. Weinberger, {\it Nonlinear diffusion in population genetics, combustion, and nerve pulse propagation}, in Partial differential equations and related topics, ed. J.A. Goldstein.
Lecture Notes in Mathematics \textbf{446}, Springer Berlin Heidelberg, pp. 5--49, 1975.

\bibitem{AV} A. Audrito and J.L. V\'{a}zquez, {\it The Fisher-KPP problem with doubly nonlinear diffusion}, J. Differential Equations \textbf{263} (2017), 7647–-7708.

\bibitem{CL} E.A. Coddington and N. Levinson, {\it Theory of ordinary differential equations}. McGraw-Hill Book Co., Inc., New York-Toronto-London, 1955.

\bibitem{DJKZ}  P. Dr\'{a}bek, S. Jung, E. Ko and M. Zahradn\'{i}kov\'{a}, {\it Traveling waves for monostable reaction-diffusion-convection equations with discontinuous density-dependent coefficients}, J. Math. Anal. Appl. \textbf{539} (2024), no. 1, part 1, Paper No. 128481, 26 pp. 

\bibitem{DM} P. Dr\'{a}bek and J. Milota, {\it Methods of Nonlinear Analysis: Applications to Differential Equations}, Birkh{\"a}user Advanced Texts Basler Lehrb{\"u}cher, Springer Basel, 2013.

\bibitem{DrTa} P. Dr\'{a}bek and P. Tak\'{a}\v{c}, {\it Travelling waves in the Fisher–KPP equation with nonlinear degenerate or singular diffusion}, Appl. Math. Optim. \textbf{84} (2021), 1185–-1208.


\bibitem{DZ} P. Dr\'{a}bek and M. Zahradn\'{i}kov\'{a},{\it Traveling waves for unbalanced bistable equations with density dependent diffusion},   Electron. J. Differential Equations 2021, Paper No. 76, 21 pp.


\bibitem{DZ21} P. Dr\'{a}bek and M. Zahradn\'{i}kov\'{a}, {\it Bistable equation with discontinuous density dependent diffusion with degenerations and singularities}, Electron. J. Qual. Theory Differ. Equ. 2021, Paper No. 61, 16 pp.


\bibitem{DZ22} P. Dr\'{a}bek and M. Zahradn\'{i}kov\'{a}. {\it Travelling waves for generalized Fisher-Kolmogorov equation with discontinuous density dependent diffusion}, Math. Meth. Appl. Sci. \textbf{46} (2023), no. 11, 12064--12086.


\bibitem{DZ25} P. Dr\'{a}bek and M. Zahradn\'{i}kov\'{a}. {\it
Traveling waves in reaction-diffusion-convection equations with combustion nonlinearity},  Nonlinear Anal. Real World Appl. 84 (2025), Paper No. 104283, 13 pp.


\bibitem{Enguica} R. Engui\c{c}a, A. Gavioli and L. Sanchez, {\it A class of singular first order differential equations with applications in reaction-diffusion}, Discrete Contin. Dyn. Syst. \textbf{33} (2013), 173--191.

\bibitem{Fisher} R.A. Fisher, {\it The wave of advance of advantageous genes}, Ann. Eugenics \textbf{7} (1937), 353--369.

\bibitem{GK} B.H. Gilding and R. Kersner, {\it Travelling Waves in Nonlinear Diffusion-Convection Reaction}. Birkh\"{a}user Verlag, Basel, 2004.

\bibitem{GM} U. Guarnotta and C. Marcelli, {\it Traveling waves for highly degenerate and singular reaction-diffusion-advection equations with discontinuous coefficients}, J. Differ. Equ. \textbf{460} (2026), no. 114075. 

\bibitem{Hamydy} A. Hamydy, {\it Travelling wave for absorption-convection-diffusion equations}, Electron. J. Differ. Equ. \textbf{2006} (2006), no. 86, 1--9.

\bibitem{KingMcCabe} J.R. King and P.M. McCabe, {\it On the Fisher--KPP Equation with Fast Nonlinear Diffusion}, Proceedings: Mathematical, Physical and Engineering Sciences \textbf{459} (2003), no. 2038, 2529–-46.

\bibitem{MM02} L. Malaguti and C. Marcelli, {\it Travelling wavefronts in reaction-diffusion equations with convective effects and non-regular terms}, Math. Nachr. \textbf{242} (2002), 148--164.

\bibitem{MM03} L. Malaguti and C. Marcelli, {\it Sharp profiles in degenerate and doubly degenerate Fisher--KPP equations},  J. Differential Equations \textbf{195} (2003), 471--496.


\bibitem{MM05} L. Malaguti and C. Marcelli, {\it Finite speed of propagation in monostable degenerate reaction-diffusion-convection equations},  Adv. Nonlinear Stud. \textbf{5} (2005), no. 2, 223-252.

\bibitem{MMM04} L. Malaguti, C. Marcelli and S. Matucci, {\it Front propagation in bistable reaction-diffusion-advection equations}, Advances in Differential Equations \textbf{9} (2004), no 9-10,  1143-1166.

\bibitem{MMM05} L. Malaguti, C. Marcelli and S. Matucci, {\it The effects of convective processes on front propagation in various reaction-diffusion equations}, EQUADIFF 2003, 795–-800, World Sci. Publ., Hackensack, NJ, 2005.

\bibitem{Murray} J.D. Murray, {\it Mathematical Biology I: An Introduction}, 3rd Edition, Springer-Verlag, Berlin, 2002.

\bibitem{Nagumo} J. Nagumo, S. Arimoto and S. Yoshizawa. {\it An Active Pulse Transmission Line Simulating Nerve Axon}, in Proceedings of the IRE \textbf{50} (1962), no. 10, 2061--2070.

\bibitem{SG-Maini} F. S\'{a}nchez-Gardu\~{n}o and P.K. Maini, {\it Existence and uniqueness of a sharp travelling wave in degenerate non-linear diffusion Fisher-KPP equation}, J. Math. Biol. \textbf{33} (1994), 163--192.

\bibitem{Strier} D.E. Strier, D.H. Zanette and H.S. Wio, {\it Wave fronts in a bistable reaction-diffusion system with density-dependent diffusivity}, Phys. A: Stat. Mech. Appl. \textbf{226} (1996), no. 3, 310--323.

\bibitem{W} W. Walter, {\it Ordinary differential equations}, Graduate Texts in Mathematics, 182. Readings in Mathematics. Springer-Verlag, New York, 1998. xii+380 pp. ISBN: 0-387-98459-3 34-01

\end{thebibliography}
\end{document}